\def \cA{\mathcal{A}}
\def \cC{\mathcal{C}}
\def \cF{\mathcal{F}}
\def \cG{\mathcal{G}}
\def \cI{\mathcal{I}}
\def \cJ{\mathcal{J}}
\def \cK{\mathcal{K}}
\def \cL{\mathcal{L}}
\def \cM{\mathcal{M}}
\def \cO{\mathcal{O}}
\def \cS{\mathcal{S}}
\def \cT{\mathcal{T}}
\def \P{\mathsf P}
\def \E{\mathsf E}
\def \N{\mathbb{N}}
\def \R{\mathbb{R}}
\def \F{\mathbb F}
\def \ud{\mathrm{d}}
\def \e{\mathrm{e}}
\newcommand{\eps}{\varepsilon}
\newcommand{\ind}{\mathds{1}}
\newtheorem{theorem}{Theorem}[section]
\newtheorem{lemma}[theorem]{Lemma}
\newtheorem{corollary}[theorem]{Corollary}
\newtheorem{proposition}[theorem]{Proposition}
\newtheorem{definition}[theorem]{Definition}
\newtheorem{remark}[theorem]{Remark}
\newtheorem{assumption}[theorem]{Assumption}
\theoremstyle{definition}
\DeclareMathOperator{\sign}{sign}
\DeclareMathOperator{\inter}{int}
\newcommand{\bunderline}[1]{\underline{#1\mkern-4mu}\mkern4mu }
\newcommand{\boverline}[1]{\overline{#1\mkern0mu}\mkern4mu }
\title[Regularity of the value function in a Stopper vs.\ Singular-Controller Game]{Global regularity of the value function in a \\ Stopper vs.\ Singular-Controller Game}
\author[Bovo]{Andrea Bovo}
\author[Milazzo]{Alessandro Milazzo}
\subjclass[2020]{35R35, 49N60, 60G40, 91A05, 91A15, 93E20}
\keywords{free-boundary problem for PDEs, 2-person games, singular control, optimal stopping}
\address{A.\ Bovo: School of Management and Economics, Dept.\ ESOMAS, University of Torino, Corso Unione Sovietica, 218 Bis, 10134, Torino, Italy.}
\email{\href{mailto:andrea.bovo@unito.it}{andrea.bovo@unito.it}}
\address{A.\ Milazzo: Department of Economics, University of Eastern Piedmont, Via E.\ Perrone, 18, 28100, Novara, Italy.}
\email{\href{mailto:alessandro.milazzo@uniupo.it}{alessandro.milazzo@uniupo.it}}
\date{\today}
\numberwithin{equation}{section}
\begin{document}

\begin{abstract}
We study a class of zero-sum stochastic games between a stopper and a singular-controller, previously considered in \cite{bovo2024saddle}. The underlying singularly-controlled dynamics takes values in $\cO\subseteq\R$. The problem is set on a finite time-horizon and is connected to a parabolic variational inequality of min-max type with spatial-derivative and obstacle constraints.

We show that the value function of the problem is of class $C^1$ in the whole domain $[0,T)\times\cO$ and that the second-order spatial derivative and the second-order mixed derivative are continuous everywhere except for a (potential) jump across a non-decreasing curve (the stopping boundary of the game). The latter discontinuity is a natural consequence of the partial differential equation associated to the problem. Beyond its intrinsic analytical value, such a regularity for the value function is a stepping stone for further exploring the structure and properties of the free-boundaries of the stochastic game, which in turn determine the optimal strategies of the players.
\end{abstract}

\maketitle

\section{Introduction}
Two-player zero-sum stochastic games between a stopper and a singular-controller formalise situations where two players, a stopper and a singular-controller, observe an underlying process $X$ on a finite-time horizon. The first player (stopper) chooses the stopping time when to end the game and receives a payoff from the second player (controller). The payoff depends on the position of $X$ at the end of the game, on the sample path of $X$ in the interval of time when the game is played, and on a cost proportional to the amount of control exerted by the controller. In the Introduction of \cite{bovo2024halfline} and \cite{hernandez2015zero}, the authors show possible applications of this class of games in the context of financial mathematics: the latter describes a model in which a central bank (controller) optimises over a payoff by choosing the exchange rate up to a possible political veto (stopper). The former considers, e.g., a worst-case scenario model in which a portfolio is optimised by an agent (controller) up to the least favorable stopping time (stopper).

In recent years, this class of stochastic games has been studied extensively by Bovo, De Angelis, Issoglio and Palczewski in different setups \cite{bovo2024halfline,bovo2024saddle,bovo2024variational,bovo2023,bovo2023b}.
In \cite{bovo2024variational}, the authors prove that there exists a value function for the game, which is the maximal solution of a system of variational inequalities of min-max type, and provide an optimal strategy for the stopper. The Markovian structure of the problem allows an implicit characterisation of the region where the first player is better off stopping. 
The contributions \cite{bovo2023,bovo2023b} extend some results from \cite{bovo2024variational} by proving that the game admits a value and by providing an optimal strategy for the stopper in the cases where either the controller is allowed to act in only a few directions of the state space or where the diffusion coefficient of the underlying process is degenerate. 
In \cite{bovo2024halfline}, the attention is focused on the case where the underlying process is one-dimensional and lives in a bounded domain with an absorbing boundary. The paper follows ideas from \cite{bovo2024variational} but requires additional work due to a technical issue arising from the presence of the absorbing boundary. 
In \cite{bovo2024saddle}, the authors find a saddle point of the game on a finite-time horizon and where the underlying process is one-dimensional. They connect the partial derivative with respect to the space variable of the value function with an optimal stopping problem with absorption. This connection is used to characterise the free-boundary $b$, and to provide the optimal strategy for the controller, which is implemented by reflecting the underlying process along $b$. The current paper contributes to this stream of literature by addressing questions about the differentiability of the value function in the whole domain; we are particularly interested in the regularity of the value function across the free-boundaries $a$ (associated to the stopping region of the original game) and $b$, which requires some technical results such as the continuity of the hitting times with respect to the starting points. These boundaries split the domain into three regions: the action region $\cM\coloneqq\cI^c$ (delimited by $b$), the stopping region $\cS\coloneqq\cC^c$ (delimited by $a$) and $\cC\cap\cI$, whose precise definitions are given in Section \ref{sec:main_thm}.

We prove that the value function is of class $C^1$ (jointly in time and space) in the whole domain $[0,T)\times\cO$ (where $\cO$ is the domain of $X$) and we show that the second-order spatial derivative and the second-order mixed derivative are continuous everywhere except for a (potential) jump across the stopping boundary (Theorem \ref{thm:main}). The latter discontinuity is a natural consequence of the partial differential equation (PDE) associated to the problem (see Remark \ref{rmk:v_xx-disc}). The continuity of the spatial derivative is a direct result from \cite{bovo2024halfline,bovo2024variational} whereas the continuity of the time derivative is obtained in here in two steps: we first provide this result at the boundary $a$ (Theorem \ref{thm:v_tC0}), then we prove it at the boundary $b$. The latter is obtained as a by-product of the $C^1$-regularity of $v_x$ (Theorems \ref{thm:v_txCont} and \ref{thm:v_xxCont}). We recall that $v_x$ is the value function of an auxiliary optimal stopping problem with absorption (see \cite{bovo2024saddle}). Therefore, we adopt methods from optimal stopping theory for the analysis of the stochastic game and the auxiliary optimal stopping problem. In particular, we study the regularity of the entry times in $\cS$ and $\cM$ as a function of the initial time and starting point. We want to emphasise that, beyond its analytical value, the regularity we obtain for the value function provides a foundation for further studying these games: it allows the application of (generalised) It\^o's formula that leads to the analysis of the free-boundaries, their integral equations, and numerical approximations of the value function. Our work is a natural continuation of the existing results on this class of games.

Recently, stochastic games of stopper vs.\ controller type have received increasing attention. In 1996, Maitra and Sudderth \cite{maitra1996gambler} provided the first contribution for a class of games in discrete time. Then, the problem was studied in a continuous time framework in which the controller is allowed to use `classical' controls, i.e., functions which are absolutely continuous with respect to the Lebesgue measure of time (see, e.g., \cite{bayraktar2013controller, bayraktar2011regularity, choukroun2015bsde}). In those contributions, the underlying process is one-dimensional and the game is set on an infinite-time horizon. Since the admissible controls are `classical', the one-dimensionality of the problem leads to a free-boundary problem of obstacle type with a Hamiltonian. Thus, there is only one free-boundary in those problems and it is the one associated to the stopper. In particular, it is equal to the union of isolated points of the real line. In \cite{bayraktar2013controller, choukroun2015bsde}, the value functions is proved to be continuous, but there are no results on its differentiability.
In \cite{bayraktar2011regularity}, the authors consider a specific framework where they are able to connect the game with a classical control problem; the latter consists of minimising the probability of lifetime ruin when the consumption rate is random and the optimiser invests in a financial market. The value function of the game is obtained via viscosity approach and it is shown to be globally $C^1$, and globally $C^2$ but for a finite number of points. Finally, Hernandez-Hernandez and co-authors study the game when the control is singular (as function of time), like in our setting, but in the infinite-time horizon framework \cite{hernandez2015zero, hernandez2015zsgsingular}. In the former paper, they provide a verification theorem for the stochastic game and show some examples with an explicit solution, in particular one where the value function is merely continuous; in the latter paper, they consider an underlying L\'evy process and obtain continuous differentiability (almost everywhere) of the value function by imposing some structural conditions.

In all the aforementioned works in continuous time, the time-horizon is not finite, and the underlying process is one-dimensional. Combining these two assumptions, the variationl inequality associated to those problems is composed of an ordinary differential equation, and the associated free-boundaries are union of isolated points. Similarly, we work with a one-dimensional diffusion but on a finite-time horizon. Our value function is thus connected with a partial differential equation of parabolic type so that: (i) the continuous differentiability of the value function requires differentiability in two variables and (ii) the free-boundaries are curves in $\R^2$. Therefore, the main challenge of this work is the regularity of the value function across these moving barriers. 

Some examples of two-player stochastic games between a stopper and a singular-controller have been studied also in the nonzero-sum framework; in particular, in the context of stochastic games with uncertain competition (also called `ghost' games), where one of the two players (the ghost) takes part in the game only with some probability \cite{2022salami,ekstrom2023finetti}. In the former paper a `ghost' controller plays against a stopper, whereas in the latter paper a `ghost' stopper plays against a controller. However, these are only some specific examples and - to the best of our knowledge - nonzero-sum stochastic games between a stopper and a singular-controller have not been studied in a general setup yet.

Our paper is organised as follows. In Section \ref{sec:setting}, we formulate the problem and present the main assumptions of our analysis. In Section \ref{sec:main_thm}, we give detailed references to properties previously obtained on this problem and we state the main contribution of our work. Its proof is distilled in the rest of the paper. In Section \ref{sec:vtcont}, we prove continuity of the time derivative across the boundary $a$. In section \ref{sec:convergence}, we prove the a.s.\ convergence of the optimal stopping times associated to the auxiliary problem as functions of the initial time and space values. In Section \ref{sec:vxdiff}, we obtain the differentiability in time and space of $v_x$ across the boundary $b$ and as by-product the differentiability in time of $v$.

\section{Problem formulation and main assumptions}\label{sec:setting}

In this section, we define our problem and its assumptions. Let $(\Omega,\cF,\F,\P)$ be a filtered probability space and let $(W_t)_{t\geq0}$ be a one-dimensional Brownian motion adapted to $\F=(\cF_t)_{t\geq0}$. Fix a time horizon $T\in(0,\infty)$. The uncontrolled stochastic dynamics $(X^0_t)_{t\in[0,T]}$ is the unique (non-explosive) $\F$-adapted solution of the one-dimensional stochastic differential equation (SDE):
\begin{align}\label{eq:SDE0}
X_t^{0}=x+\int_0^t\mu(X_s^{0}) \ud s+ \int_0^t\sigma(X_s^{0})\ud W_s,\quad t\in[0,T].
\end{align}
The process $X^0$ evolves in an open unbounded domain $\cO$. In particular, we assume that either $\cO=\R$ or $\cO=(0,\infty)$, and we write $\overline{\cO}=\R$ or $\overline{\cO}=[0,\infty)$ with a slight abuse of notation. Assumptions on the functions $\mu:\cO\to\R$ and $\sigma:\cO\to(0,\infty)$ are stated later in Assumption \ref{ass:1} and they will guarantee the existence of a unique solution of \eqref{eq:SDE0}. For future reference, we denote by $\cL$ the infinitesimal generator associated to $X^0$, i.e.,
	\begin{equation}\label{eq:cL}
		(\cL \varphi)(t,x)\coloneqq\mu(x)\varphi_x(t,x)+\tfrac{\sigma^2(x)}{2}\varphi_{xx}(t,x),
	\end{equation}
for any sufficiently regular function $\varphi:[0,T]\times\cO\to\R$.

Admissible controls will be drawn from the class of bounded variation c\`adl\`ag processes. Precisely, for $(t,x)\in[0,T]\times\cO$, the class of admissible controls is defined as
\begin{align*}
\cA_{t,x}\coloneqq \left\{\nu \left|
\begin{aligned}\,&\,\nu_s=\nu_s^+-\nu_s^-\text{ for all $s\geq 0$ where $(\nu_s^+)_{s\geq0}$ and $(\nu_t^-)_{t\geq 0}$ are $\F$-adapted,}\\
\,&\text{real-valued, non-decreasing, c\`adl\`ag, with $\nu_{0-}^\pm=0$ and $\E[|\nu_{T-t}^\pm|^2]<\infty$;}\\
\,&\text{if $\cO=(0,\infty)$, then $X^\nu_{\tau_\cO}=0$, $\P$-a.s.\ on $\{\tau_\cO<T-t\}$;}
\end{aligned} 
\right. \right\}\!,
\end{align*}
where the controlled dynamics is defined as follows: letting $\tau_\cO\coloneqq\inf\{s\ge 0:X^\nu_s\notin\cO\}\wedge(T-t)$, the process $(X_s^\nu)_{s\in[0,T-t]}$ is the unique $\F$-adapted solution of
\begin{align}\label{eq:SDE}
\begin{aligned}
X_s^{\nu}&=x+\int_0^s\mu(X_v^{\nu}) \ud v+ \int_0^s\sigma(X_v^{\nu})\ud W_v+\nu_s,\quad s\in[0,\tau_\cO],\\
\end{aligned}
\end{align}
with $X_s^\nu=0$, for $s\in(\tau_\cO,T-t]$ (with the convention $(a,a]=\varnothing$), meaning that $X^\nu_s(\omega)=0$ for $s\in(\tau_\cO(\omega),T-t]$ for $\P$-a.e.\ $\omega$.

Notice that $X_{0-}=x\in\cO$ and sometimes we will use the notation $X^{\nu;x}$ to indicate the starting point of the process $X^{\nu}$. 
Finally, the class of admissible stopping times is given by
\begin{align*}
\cT_t\coloneqq\{\tau\,|\,\tau\text{ is a $\F$-stopping time with $\tau\in[0,T-t]$, $\P$-a.s.}\}.
\end{align*}

Given $(t,x)\in[0,T]\times \cO$ and an admissible pair $(\tau,\nu)\in\cT_t\times\cA_{t,x}$ we consider an expected payoff of the form 
\begin{align}\label{eq:payoff}
\cJ_{t,x}(\tau,\nu)\coloneqq\E_x\Big[\e^{-r\tau}g(t+\tau)+\int_{0}^\tau \!\e^{-rs}h(t+s,X_s^{\nu})\,\ud s+\int_{[0,\tau]}\!\e^{-rs}\alpha_{0}\,\ud |\nu|_s\Big]
\end{align}
where $r\geq0$ and $\alpha_0>0$ are given constants, $g:[0,T]\to \R$, $h:[0,T]\times\cO\to \R$ are suitable functions (whose properties will be described in Assumption \ref{ass:2}) and $|\nu|_s$ denotes the total variation of $\nu$ on the interval $[0,s]$. The function $\cJ_{t,x}$ is the expected payoff in a zero-sum game in which the {\em maximiser} picks a stopping time $\tau$ and the {\em minimiser} picks an admissible control $\nu$. As usual we introduce the upper and lower value of the game, denoted $\overline v$ and $\underline v$, respectively, and defined as 
\[
\overline v(t,x)=\adjustlimits\inf_{\nu\in\cA_{t,x}}\sup_{\tau\in\cT_t}\cJ_{t,x}(\tau,\nu)\quad\text{and}\quad \underline v(t,x)=\adjustlimits\sup_{\tau\in\cT_t}\inf_{\nu\in\cA_{t,x}}\cJ_{t,x}(\tau,\nu).
\] 
In general, $\underline v\le\overline v$ but Assumptions \ref{ass:1} and \ref{ass:2} guarantee the existence of a saddle point for the game so that $\underline v=\overline v$.

\begin{definition}\label{def:NE}
Fix $(t,x)\in[0,T]\times\cO$. An admissible pair $(\tau_*,\nu^*)\in\cT_t\times\cA_{t,x}$ is a saddle point for the game with payoff \eqref{eq:payoff}, evaluated at $(t,x)$, if
\begin{align*}
\cJ_{t,x}(\tau,\nu^*)\le\cJ_{t,x}(\tau_*,\nu^*)\le \cJ_{t,x}(\tau_*,\nu),\quad\text{for all $(\tau,\nu)\in\cT_t\times\cA_{t,x}$}.
\end{align*}
\end{definition} 

Before giving the main assumptions of the paper, we introduce some notation about functional spaces that are used in the paper. For $\gamma\in(0,1)$, we denote by $C^{\gamma}(D)$ the set of $\gamma$-H\"older continuous functions on $D\subset[0,T]\times\cO$ with respect to the parabolic distance. We denote by $C^{0,1;\gamma}(D)$ the set of functions in $C^{\gamma}(D)$ whose derivative with respect to space is also an element of $C^{\gamma}(D)$ and we use $C^{1,2;\gamma}(D)$ to indicate functions whose first derivatives with respect to time and space, and second derivative with respect to space belong to $C^{\gamma}(D)$. We also recall the set of functions $W^{1,2;p}(D)$ whose first order derivatives and second order spatial derivative, possibly in a weak sense, belong to $L^p(D)$, for some $p\in[1,\infty]$. The compact embedding $W^{1,2;p}(D)\hookrightarrow C^{0,1;\gamma}(D)$, with $p>3$, $\gamma={3/p}$ and $D\subset\R$ (see, e.g., \cite[Eq.\
(E.9)]{fleming2012deterministic}) implies that functions in the space $W^{1,2;p}(D)$ admits a continuous version of its spatial derivative. Finally, we use the subscript $\ell oc$ for example in $C^{0,1;\gamma}_{\ell oc}([0,T]\times \cO)$ to indicate the set of functions belonging to $C^{0,1;\gamma}(\cK)$ for any compact $\cK\subset[0,T]\times \cO$. A detailed presentation of these sets can be found in \cite[Sec.\ 2]{bovo2024variational}.

The next set of assumptions are needed to recall results from \cite{bovo2024halfline,bovo2024saddle,bovo2024variational}.

\begin{assumption}[{\bf Dynamics}]\label{ass:1}
The following conditions hold:
\begin{itemize}
\item[i)] The function $\mu$ has linear growth, it is twice continuously differentiable and convex. Its derivative $\mu_x$ is bounded from above and there exist $C>0$ and $p\in[1,\infty)$ such that, for every $x,y\in\cO$, we have
\begin{equation}\label{eq:mu_x-lip}
 |\mu_x(x)-\mu_x(y)|\leq C(1+|x|^p+|y|^p)|x-y|.
\end{equation}
Moreover, $\mu_{xx}\in C^\gamma_{\ell oc}(\cO)$ for some $\gamma\in(0,1)$;

\item[ii)] If $\cO=(0,\infty)$, then $\mu(0)=0$ and $\mu_x(0)=\lim_{x\downarrow 0}\mu_x(x)$ finite (hence, $\mu$ is Lipschitz);

\item[iii)] The function $\sigma$ is such that $\sigma(x)=\sigma_0$ when $\cO=\R$, and $\sigma(x)=\sigma_1 x$ when $\cO=(0,\infty)$, for some $\sigma_0,\sigma_1\in(0,\infty)$.
\end{itemize} 
\end{assumption}

\begin{remark}\label{rmk:2gBm}
Notice that, when $\cO=(0,\infty)$, Assumption \ref{ass:1} implies that $\mu_x(0)x\leq \mu(x) \leq \bar\mu_x x,$ for every $x\in\cO$, where $\bar \mu_x\coloneqq\sup_{x\in\cO}\mu_x(x)$. Then, there exist two geometric Brownian motions
$$\bunderline{X}_t\coloneqq x\exp\big\{(\mu_x(0)-\sigma_1^2/2)t+\sigma_1 W_t \big\} \quad \text{and} \quad \boverline X_{\!\!t}\coloneqq x\exp\big\{(\bar\mu_x-\sigma_1^2/2)t+\sigma_1 W_t \big\},$$
such that, by standard comparison principles for SDEs, we obtain $0<\bunderline{X}_t\leq X^0_t\leq \boverline X_{\!\!t}<\infty,$ for every $t\geq 0$, $\P$-a.s. That is, $X^0_t\in\cO$ for every $t\geq 0$, $\P$-a.s.
\end{remark}

Letting 
\begin{equation}\label{eq:Theta}
	\Theta(t,x)\coloneqq \dot g(t)-rg(t)+h(t,x),\quad \text{for $(t,x)\in[0,T]\times\overline\cO$},
\end{equation}
we set the following further assumptions, also required to recall results from \cite{bovo2024halfline,bovo2024saddle,bovo2024variational}.
\begin{assumption}[{\bf Payoff}]\label{ass:2}
For some $\gamma\in(0,1)$ and $c>0$, the following conditions hold:
\begin{itemize}
 \item[ i)] The function $g$ is continuously differentiable with $\dot g\in C^\gamma([0,T])$;
	\item[ ii)] The function $h\in C^\gamma_{\ell oc}([0,T]\times\cO)$ with $|h(t,x)|\le c(1+|x|^2)$; the spatial derivative $h_x\in C^{0,1;\gamma}_{\ell oc}([0,T]\times\cO)$ and it is non-negative, with $x\mapsto h_x(t,x)$ non-decreasing, $t\mapsto h_x(t,x)$ non-increasing and $|h_x(t,x)|\le c(1+|x|^2)$; moreover, for every $t\in[0,T)$ there exists $L(t)$ such that for every $x,y\in\cO$ we have
 \begin{equation}\label{eq:h_x-lip}
 |h_x(t,x)-h_x(t,y)|\leq L(t)|x-y|,
 \end{equation}
where the function $t\mapsto L(t)$ is locally bounded;
 \item[iii)] The function $t\mapsto \Theta(t,x)$ is non-increasing for all $x\in\overline\cO$ and, when $\cO=(0,\infty)$, $\Theta(0,0)< 0$;
\item[iv)] There is $K_1>0$ such that 
$
\inf_{(t,x)\in[0,T]\times\cO}\Theta(t,x)= -K_1;
$
\item[v)] For all $t\in[0,T)$ we have $\sup_{x\in\cO}\Theta(t,x)>0$;
\item[vi)] There exist $C>0$ and $p\in[1,\infty)$ such that for $t>s$ and $x\in\cO$ we have
\begin{align}\label{eq:Theta_Lip_time}
\Theta(t,x)-\Theta(s,x)\geq -C(t-s)(1+|x|^p),
\end{align}
and
\begin{align}\label{eq:hx_Lip_time}
h_x(t,x)-h_x(s,x)\geq -C(t-s)(1+|x|^p).
\end{align}
\end{itemize}
\end{assumption}

\section{Some previous results and the main theorem}\label{sec:main_thm}
In this section we recall some results from \cite{bovo2024halfline,bovo2024saddle,bovo2024variational} that are essential for the presentation of our main contribution. We will make precise references whenever we recall results from those contributions to help the interested reader. We conclude the section providing the main theorem of the paper.

\subsection{Initial properties of the value function}\label{sec:InitProp}
Assumptions \ref{ass:1} and \ref{ass:2} guarantee that results from \cite{bovo2024halfline}, \cite{bovo2024saddle} and \cite{bovo2024variational} hold. In particular, we have from \cite[Th.\ 2.3]{bovo2024halfline} (for the case $\cO=(0,\infty)$) and from \cite[Th.\ 2.3]{bovo2024variational} (for the case $\cO=\R$) that the value of the game $v$ exists, 
\begin{align}\label{eq:reg_v_vx}
v\in C([0,T]\times\overline{\cO})\cap W^{1,2;p}_{\ell oc}([0,T)\times\cO)\text{, }\;\text{and }\;v_x\in C^{\gamma}_{\ell oc}([0,T)\times\cO)
\end{align}
thanks to the compact embedding of $W^{1,2;p}([0,T)\times\cO)\hookrightarrow C^{0,1;\gamma}([0,T)\times\cO)$. Moreover, $v$ solves a.e.\ the system of variational inequalities
\begin{align*}
\begin{cases}
\max\{\min\{v_t+\cL v-rv +h, \alpha_0-|v_x|\}, g-v\}=0\\
\min\{\max\{v_t+\cL v-rv +h, g-v\}, \alpha_0-|v_x|\}=0,
\end{cases}
\end{align*}
where $\cL$ was defined in \eqref{eq:cL} and with $v(T,x)=g(T)$ for all $x\in\cO$ and $v(t,0)=g(t)$ for all $t\in[0,T)$ (whenever $\cO=(0,\infty)$). In particular, it satisfies two constraints: $v(t,x)\ge g(t)$ and $v_x(t,x)\leq \alpha_0$ for all $(t,x)\in[0,T]\times\cO$. We can characterise two regions of the space, the continuation region $\cC$ and the inaction region $\cI$, defined as 
\begin{align*}
\cC\coloneqq \{(t,x)\in[0,T]\times\cO\,|\, v(t,x)>g(t)\}\quad\text{and}\quad\cI\coloneqq \{(t,x)\in[0,T]\times\cO\,|\,v_x(t,x)<\alpha_0\},
\end{align*}
respectively. The complementary sets are denoted by $\cS\coloneqq\cC^c$ and $\cM\coloneqq\cI^c$, the stopping set and the action set, respectively. The motivation for those names will be clear in \eqref{eq:saddle_def} when we will present a saddle point of the game.
 
We now recall a result about the monotonicity of the functions $v$ and $v_x$ with respect to the variables $t$ and $x$.
\begin{proposition}\label{prop:monotonicity} The following results hold:
\begin{itemize}
\item[i)] The mapping $x\mapsto v(t,x)$ is non-decreasing and convex, for every $t\in[0,T]$;
\item[ii)] The mapping $t\mapsto v(t,y)-g(t)$ is non-increasing for every $y\in\cO$;
\item[iii)] The mapping $t \mapsto v_x(t,y)$ is non-increasing for every $y\in\cO$.
\end{itemize}
\end{proposition}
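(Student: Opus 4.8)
\emph{Proof proposal.} The plan is to obtain all three items by pathwise coupling arguments on a common Brownian path, comparing $\eps$-optimal controls of the minimiser; the only structural inputs are the monotonicity and convexity of the data in Assumptions~\ref{ass:1}--\ref{ass:2}. As a preliminary reduction I would peel off the obstacle: by the fundamental theorem of calculus $\e^{-r\tau}g(t+\tau)=g(t)+\int_0^{\tau}\e^{-rs}(\dot g(t+s)-rg(t+s))\,\ud s$, so with $\Theta$ as in \eqref{eq:Theta} the payoff \eqref{eq:payoff} reads
\begin{equation*}
 \cJ_{t,x}(\tau,\nu)-g(t)=\widetilde\cJ_{t,x}(\tau,\nu):=\E_x\Big[\int_0^{\tau}\e^{-rs}\,\Theta(t+s,X^{\nu}_s)\,\ud s+\int_{[0,\tau]}\e^{-rs}\alpha_0\,\ud|\nu|_s\Big],
\end{equation*}
and $v(t,x)-g(t)=\inf_{\nu}\sup_{\tau}\widetilde\cJ_{t,x}(\tau,\nu)$.

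For item~(i), fix $t$ and $x_1<x_2$ and take $\nu$ that is $\eps$-optimal for the outer infimum at $(t,x_2)$. Feeding the same $\nu$ into the dynamics started at $x_1$ — with the standard truncation near $\partial\cO$ when $\cO=(0,\infty)$, which only lowers $X$ and $|\nu|$ and hence preserves the relevant inequalities — the one-dimensional comparison theorem for SDEs gives $X^{\nu;x_1}_s\le X^{\nu;x_2}_s$ for all $s$; since $x\mapsto h(t,x)$ is non-decreasing (Assumption~\ref{ass:2}(ii)), $g$ is $x$-free and $|\nu|$ is unchanged, this yields $v(t,x_1)\le\sup_{\tau}\cJ_{t,x_1}(\tau,\nu)\le\sup_{\tau}\cJ_{t,x_2}(\tau,\nu)\le v(t,x_2)+\eps$, and $\eps\downarrow0$ gives the monotonicity. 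For convexity I would fix $\lambda\in(0,1)$, set $x_\lambda=\lambda x_1+(1-\lambda)x_2$, pick $\eps$-optimal minimiser controls $\nu^{(i)}$ at $(t,x_i)$, and use $\nu^{\lambda}:=\lambda\nu^{(1)}+(1-\lambda)\nu^{(2)}\in\cA_{t,x_\lambda}$. The crucial observation is that $Y_s:=\lambda X^{\nu^{(1)};x_1}_s+(1-\lambda)X^{\nu^{(2)};x_2}_s$ starts at $x_\lambda$ and, using that $\sigma$ is affine (Assumption~\ref{ass:1}(iii)) to keep its diffusion term exactly $\sigma(Y_s)\,\ud W_s$ and that $\mu$ is convex (Assumption~\ref{ass:1}(i)), solves $\ud Y_s=b_s\,\ud s+\sigma(Y_s)\,\ud W_s+\ud\nu^{\lambda}_s$ for some adapted $b_s\ge\mu(Y_s)$; hence $X^{\nu^{\lambda};x_\lambda}_s\le Y_s$ by comparison. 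Since $x\mapsto h(t,x)$ is non-decreasing and convex (Assumption~\ref{ass:2}(ii)), $\ud|\nu^{\lambda}|\le\lambda\,\ud|\nu^{(1)}|+(1-\lambda)\,\ud|\nu^{(2)}|$, and $g$ is $x$-free, it follows that $\cJ_{t,x_\lambda}(\tau,\nu^{\lambda})\le\lambda\,\cJ_{t,x_1}(\tau,\nu^{(1)})+(1-\lambda)\,\cJ_{t,x_2}(\tau,\nu^{(2)})$ for every $\tau$, whence $v(t,x_\lambda)\le\lambda v(t,x_1)+(1-\lambda)v(t,x_2)+\eps$, and $\eps\downarrow0$ concludes.

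For item~(ii), fix $y$ and $t_1<t_2$, so $\cT_{t_2}\subseteq\cT_{t_1}$ and $[0,T-t_2]\subset[0,T-t_1]$. I would take $\nu$ that is $\eps$-optimal for the minimiser at $(t_1,y)$ and let $\nu'\in\cA_{t_2,y}$ be its restriction to $[0,T-t_2]$, so that $X^{\nu';y}$ and $|\nu'|$ coincide with $X^{\nu;y}$ and $|\nu|$ on $[0,T-t_2]$. Because $t\mapsto\Theta(t,x)$ is non-increasing (Assumption~\ref{ass:2}(iii)) and $t_2+s\ge t_1+s$, for every $\tau\in\cT_{t_2}$ one has $\widetilde\cJ_{t_2,y}(\tau,\nu')\le\widetilde\cJ_{t_1,y}(\tau,\nu)$, whence $v(t_2,y)-g(t_2)\le\sup_{\tau\in\cT_{t_2}}\widetilde\cJ_{t_2,y}(\tau,\nu')\le\sup_{\tau\in\cT_{t_1}}\widetilde\cJ_{t_1,y}(\tau,\nu)\le v(t_1,y)-g(t_1)+\eps$, and $\eps\downarrow0$ gives the claim.

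Item~(iii) is the delicate one, and here the plan is \emph{not} to argue directly on $v$ but to pass to the derivative problem. Since admissible controls do not depend on the starting point, the derivative flow $Z_s=\partial_x X^{\nu;x}_s$ solves a linear SDE not involving $\nu$ and is strictly positive; differentiating in $x$ the variational inequality solved by $v$ then leads to the identification of $v_x$ with the value of an auxiliary optimal stopping problem — with absorption at value $0$ on the stopping boundary (where $v\equiv g$, so $v_x=0$) and obstacle $\alpha_0$ — whose only time-dependent datum is the running gain $(s,x)\mapsto h_x(t+s,x)$; this is precisely the representation rigorously established in~\cite{bovo2024saddle}. Granting it, the monotonicity of $t\mapsto h_x(t,x)$ (Assumption~\ref{ass:2}(ii)) feeds into a comparison in that optimal stopping problem at $t_1<t_2$ — restricting to the shorter time horizon exactly as in the proof of~(ii) — and yields $v_x(t_1,y)\ge v_x(t_2,y)$. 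I expect this last reduction to be the main obstacle: unlike the transparent pathwise arguments for~(i)--(ii), it hinges on the optimal stopping representation of $v_x$ and on a careful treatment of the absorbing boundary, which is why I would quote it from~\cite{bovo2024saddle} rather than reprove it.
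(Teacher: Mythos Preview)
The paper's own proof of this proposition is entirely by citation to \cite{bovo2024saddle} (Propositions~3.1, 3.5, 3.7 and~4.5 there), so there is no argument in the present paper to compare against. Your pathwise coupling sketches for items~(i) and~(ii) are correct and are precisely the standard arguments one expects to find behind those citations; your plan for~(iii) --- passing to the auxiliary optimal stopping representation of $v_x$ and then comparing --- is also the route taken in \cite{bovo2024saddle} (the representation is Theorem~4.3 there, and the monotonicity of $t\mapsto v_x(t,y)$ is the subsequent Proposition~4.5), so you are not deviating from the intended strategy.

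One genuine imprecision in your sketch of~(iii): you assert that in the auxiliary problem ``the only time-dependent datum is the running gain $(s,x)\mapsto h_x(t+s,x)$'', but the absorption time $\tau_a(t,y)=\inf\{s\ge0:Y^y_s\le a(t+s)\}\wedge(T-t)$ also depends on $t$ through the moving boundary $a$. This does not break the argument, however: item~(ii), which you have already established, yields that $a$ is non-decreasing, whence $\tau_a(t_1,y)\ge\tau_a(t_2,y)$ for $t_1<t_2$; together with $h_x\ge0$ this pushes both the indicator $\mathds{1}_{\{\tau<\tau_a\}}$ and the running integral in the right direction, so the comparison still gives $v_x(t_1,y)\ge v_x(t_2,y)$. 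A second, smaller point: in the convexity step of~(i) you address the admissibility near $\partial\cO$ of the transplanted control for the monotonicity argument but not for $\nu^\lambda=\lambda\nu^{(1)}+(1-\lambda)\nu^{(2)}$; the same truncation device is needed (and works) there as well.
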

\begin{proof}
The proof of monotonicity and convexity of $x\mapsto v(t,x)$, for every $t\in[0,T]$, can be found in \cite[Prop.\ 3.1]{bovo2024saddle} and in \cite[Prop.\ 3.5]{bovo2024saddle}, respectively; the proof of monotonicity of $t\mapsto v(t,y)-g(t)$ and $t \mapsto v_x(t,y)$, for all $y\in\cO$, can be found in \cite[Prop.\ 3.7]{bovo2024saddle} and in \cite[Prop.\ 4.5]{bovo2024saddle}, respectively.
\end{proof}

\subsection{Free-boundaries} Proposition \ref{prop:monotonicity} allows a new parametrisation of the sets $\cC$ and $\cI$. For $t\in[0,T)$, we set
\begin{align*}
a(t)\coloneqq \inf\{y\in\cO:v(t,y)>g(t)\}\quad\text{and}\quad b(t)\coloneqq\sup\{y\in\cO:v_x(t,y)<\alpha_0\},
\end{align*}
with the convention that $\inf\varnothing=\sup\cO=+\infty$ and $\sup\varnothing=\inf\cO$. Then, by continuity of $v$ and $v_x$ \eqref{eq:reg_v_vx}, we obtain
\begin{align}\label{eq:C-I-boundaries}
\qquad \cC= \{(t,x)\in[0,T)\times\cO\,|\, x>a(t)\}\quad\text{and}\quad\cI= \{(t,x)\in[0,T)\times\cO\,|\, x<b(t)\}\cup(\{T\}\times\cO).
\end{align}

This characterisation allows a clearer representation of $v$ as a classical solution of the free-boundary problem
\begin{align}\label{eq:PDE}
\begin{aligned}
\begin{cases}v_t(t,x)+\cL v(t,x)-rv(t,x) = -h(t,x),&(t,x)\in\cC\cap\cI,\\
v(t,a(t))=g(t)\quad\text{and}\quad v_x(t,a(t))=0,& t\in[0,T),\\
v_x(t,b(t))=\alpha_0,& t\in[0,T),\\
v(T,x)=g(T),& x\in\cO.
\end{cases}
\end{aligned}
\end{align}

The regularity of $a$ and $b$ is studied in \cite[Sect.\ 6]{bovo2024saddle}. To recall those results (Proposition \ref{prop:monotone}), we introduce the (uncontrolled) diffusion $Y^y=(Y^y_t)_{t\in[0,T]}$, defined by the stochastic differential equation
\begin{align}\label{eq:SDE_Y}
Y_t=y+\int_0^t \big(\mu(Y_s)+\sigma(Y_s)\sigma_x(Y_s)\big)\ud s+ \int_0^t\sigma(Y_s) \ud W_s,\quad t\in[0,T],
\end{align}
and the corresponding exit time from $\cC$ as 
\begin{align}\label{eq:taua}
\tau_a=\tau_a(t,y)\coloneqq \inf\{s\ge 0|Y_{s}^y\le a(t+s)\}\wedge(T\!-\!t).
\end{align}
As it will be clear from the next section, the process $Y^y$, whose definition is anticipated here only to introduce Assumption \ref{ass:3} and the consequent Proposition \ref{prop:monotone}, will serve as the underlying diffusion of an auxiliary optimal stopping problem. Using $\Theta$ from \eqref{eq:Theta} and letting $\underline{\Theta}(t)\coloneqq\inf\{ y\in\cO:\Theta(t,y)>0\}$, under the following additional Assumption \ref{ass:3}, the regularity results from Proposition \ref{prop:monotone} hold.
\begin{assumption}\label{ass:3}
The following conditions hold:
\begin{itemize}
 \item[i)] The function $h_x(t,x)>0$ for any $x\le \underline{\Theta}(t)$ and all $t\in[0,T]$;
 \item[ii)] For all $t\in[0,T)$
\begin{align*}
\liminf_{y\to\infty}\E\Big[\int_0^{\tau_{a}(t,y)}\e^{-\int_0^s(r-\mu_x(Y_u^y))\ud u}h_x(t+s,Y_s^y)\ud s\Big]>\alpha_0;
\end{align*}
 \item[iii)] It holds $h_{xx}(t,b(t))+\mu_{xx}(b(t))>0$ for all $t\in[0,T)$.
\end{itemize}
\end{assumption}

\begin{proposition}\label{prop:monotone}
The following results hold:
\begin{itemize}
 \item[i)] The mapping $t\mapsto a(t)$ is non-decreasing, continuous on $[0,T]$ with $a(T-)=\underline{\Theta}(T)$, and $a(t)\le \underline{\Theta}(t)$ for all $t<T$;
 \item[ii)] The mapping $t\mapsto b(t)$ is non-decreasing and continuous on $[0,T]$ with $b(t)<\infty$ for all $t\in[0,T)$.
\end{itemize}
\end{proposition}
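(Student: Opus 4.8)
The plan is to establish the two regularity statements separately, each in two stages: first monotonicity, then continuity.

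For part (i), the monotonicity of $t\mapsto a(t)$ is immediate from Proposition \ref{prop:monotonicity}(ii): since $t\mapsto v(t,y)-g(t)$ is non-increasing, if $(t_0,y)\in\cC$ (i.e.\ $v(t_0,y)>g(t_0)$) then $(t,y)\in\cC$ for all $t\le t_0$; hence the sections of $\cC$ shrink as $t$ increases, which by the characterisation \eqref{eq:C-I-boundaries} is exactly $a(t_0)\ge a(t)$ for $t\le t_0$. For the bound $a(t)\le\underline\Theta(t)$, I would argue by contradiction: on the region $\{x>a(t)\}\cap\cI$ the function $v$ solves the PDE in \eqref{eq:PDE}, and at a point where $a(t)>\underline\Theta(t)$ one can use $v_x(t,a(t))=0$, $v(t,a(t))=g(t)$ together with the sign of $\Theta$ (recalling $\Theta=\dot g-rg+h$ and that, by Proposition \ref{prop:monotonicity}(i), $v$ is convex so $v_{xx}\ge 0$) to contradict the variational inequality $v_t+\cL v-rv+h\le 0$ just above $a(t)$; this is the standard "obstacle cannot be touched where the source is favourable" argument. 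The limit $a(T-)=\underline\Theta(T)$ then follows by combining this bound with a lower-semicontinuity/compactness argument for the decreasing limit $a(T-)$ and matching it against $\underline\Theta(T)$. The continuity of $a$ is the genuine work: right-continuity is automatic from monotonicity (a non-decreasing function built as an infimum of an open condition) plus continuity of $v$ and $g$; left-continuity is the hard direction. The standard route, following \cite{bovo2024saddle}, is to suppose $a(t-)<a(t)$ and derive a contradiction by integrating the PDE over the rectangle $(t-\delta,t)\times(a(t-),a(t))$ — on this rectangle $v$ satisfies the PDE, while on the limiting segment $\{t\}\times(a(t-),a(t))$ one has $v(t,\cdot)=g(t)$ and $v_x(t,\cdot)=0$, forcing $v_{xx}(t,\cdot)=0$ there; plugging this into the PDE gives $\Theta(t,x)=0$ on a whole interval, which contradicts the real-analyticity-type rigidity coming from Assumption \ref{ass:3}(i) (namely $h_x>0$ below $\underline\Theta$, so $\Theta$ is strictly increasing in $x$ there and cannot vanish on an interval).

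For part (ii), monotonicity of $t\mapsto b(t)$ follows from Proposition \ref{prop:monotonicity}(iii): since $t\mapsto v_x(t,y)$ is non-increasing, if $v_x(t_0,y)<\alpha_0$ then $v_x(t,y)<\alpha_0$ for all $t\ge t_0$, so the sections of $\cI$ grow with $t$, i.e.\ $b$ is non-decreasing. Finiteness $b(t)<\infty$ for $t<T$ should come from Assumption \ref{ass:3}(ii): the quantity in that hypothesis is exactly (a representation of) $v_x$ evaluated via the auxiliary stopping problem whose value is $v_x$, and the $\liminf>\alpha_0$ as $y\to\infty$ means the constraint $v_x\le\alpha_0$ must be active for large $x$, so $b(t)$ cannot be $+\infty$. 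Continuity of $b$ again splits into an easy and a hard direction; by monotonicity one side is automatic and for the other I would run the analogue of the argument above, now using that on $\cM=\{x>b(t)\}$ one has $v_x\equiv\alpha_0$ (hence $v_{xx}=0$ there), differentiating the PDE in $x$ to get an equation for $w:=v_x$, and using Assumption \ref{ass:3}(iii), $h_{xx}(t,b(t))+\mu_{xx}(b(t))>0$, to rule out a jump of $b$ — the positivity of $h_{xx}+\mu_{xx}$ at the boundary is precisely the non-degeneracy that prevents $v_{xx}$ from matching smoothly across a flat piece and forces the boundary to move continuously.

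The main obstacle is the left-continuity of $a$ (and the analogous delicate direction for $b$): monotonicity and finiteness are soft consequences of the monotonicity lemma and the assumptions, but ruling out downward jumps requires a careful local PDE argument on a shrinking space-time rectangle, controlling $v_{xx}$ up to the boundary (where only $W^{1,2;p}_{\ell oc}$-regularity is a priori available from \eqref{eq:reg_v_vx}), and then invoking the structural non-degeneracy Assumptions \ref{ass:3}(i) and (iii) to obtain the contradiction. Since the statement is attributed to \cite[Sect.\ 6]{bovo2024saddle}, in the paper itself this would be recalled rather than reproved, but the sketch above is how I would reconstruct it.
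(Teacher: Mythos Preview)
Your closing remark is exactly right: the paper's ``proof'' of this proposition is two sentences of citations to \cite{bovo2024saddle} (Prop.\ 3.7 and Cor.\ 6.3 for part (i); Cor.\ 4.6 and Cor.\ 6.7 for part (ii)), with no argument reproduced here. Your reconstruction sketch is a plausible outline of how those cited results go --- monotonicity from Proposition \ref{prop:monotonicity}, finiteness of $b$ from Assumption \ref{ass:3}(ii), and continuity via a PDE-contradiction on a shrinking rectangle using the non-degeneracy in Assumption \ref{ass:3}(i),(iii) --- but none of it appears in the present paper, so there is nothing further to compare.
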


\begin{proof}
The proof of monotonicity and continuity of $a$ on $[0,T]$ with limit $a(T-)=\underline{\Theta}(T)$ is obtained combining results from \cite[Prop.\ 3.7]{bovo2024saddle} and \cite[Cor.\ 6.3]{bovo2024saddle}. The proof of monotonicity and continuity of $b$ on $[0,T]$ with $b(t)<\infty$ for all $t\in[0,T]$ is obtained combining \cite[Cor.\ 4.6]{bovo2024saddle} and \cite[Cor.\ 6.7]{bovo2024saddle}.
\end{proof}
By continuity of $v_x$, it is not difficult to notice that $a(t)<b(t)$ for any $t\in[0,T)$ such that $b(t)>\inf\cO$ and $a(t)<\sup\cO$. Proposition \ref{prop:monotone} guarantees that $a(t)\leq \underline{\Theta}(t)<\sup\cO$ for every $t\in [0,T)$. We assume that $b(t)>\inf\cO$ for every $t\in[0,T)$. Some sufficient conditions that guarantee this result can be found in \cite[Prop.\ 4.8]{bovo2024saddle}. Therefore,
\begin{equation}\label{eq:a<b}
 a(t)<b(t), \qquad t\in[0,T).
\end{equation}

\subsection{An auxiliary optimal stopping problem}
In \cite[Sec.\ 4]{bovo2024saddle}, it is shown that $v_x$ admits a probabilistic representation, in the form of an optimal stopping problem, which we describe in this section. Heuristically, this is obtained by differentiating the PDE in \eqref{eq:PDE} with respect to the variable $x$ and connecting the newly PDE to an optimal stopping problem with absorption. This approach mimics the renowned connection between singular control and optimal stopping problems, introduced in \cite{karatzas1984bridge1, karatzas1985bridge2}, but its formal derivation in the context of stochastic games requires some additional formal proofs.

Under our assumptions, the SDE \eqref{eq:SDE_Y} admits a unique strong solution, denoted by $Y=Y^y$. The end-points of $\cO$ remain unattainable for the process $Y$: (i) for $\cO=\R$, the SDE is non-explosive, (ii) for $\cO=(0,\infty)$, we have that $\sigma(y)\sigma_x(y)= \sigma_1^2 y$ is linear and so a similar observation as in Remark \ref{rmk:2gBm} still holds. Furthermore, we introduce an additional mild assumption, which will only be needed in Proposition \ref{prop:v_x-Lip-time} and Theorem \ref{thm:v_txCont}.
\begin{assumption}\label{ass:Ypdf}
 For every $(t,y)\in (0,T]\times\cO$, the probability density function $p(\,\cdot\,;t,y)$ of the random variable $Y^y_t$ is continuously differentiable on $\cO$.
\end{assumption}

\begin{remark}
 Notice that when $\cO=\R$, since $\sigma(x)=\sigma_0\in(0,\infty)$ for every $x\in\cO$ in our setting, the result of Assumption \ref{ass:Ypdf} is already guaranteed by, e.g., \cite[Th.\ 1.2]{menozzi2021density}. Therefore, in fact, we only need Assumption \ref{ass:Ypdf} when $\cO=(0,\infty)$.
\end{remark}

We denote by $\cG$ the infinitesimal generator associated to $Y$, i.e., 
\begin{equation}\label{eq:cG}
	(\cG \varphi)(t,x)\coloneqq (\mu(x)+\sigma(x)\dot{\sigma}(x))\varphi_x(t,x)+\tfrac{\sigma^2(x)}{2}\varphi_{xx}(t,x),
\end{equation}
for any sufficiently regular function $\varphi:[0,T]\times\cO\to\R$, and we define 
\begin{align}\label{eq:lambda}
\lambda(y)\coloneqq r-\mu_x(y)\quad\text{for $y\in\cO$},
\end{align}
which will serve as the discount factor in \eqref{eq:optstop} below.

We know that $v_x$ is a bounded continuous function, with $0\leq v_x\leq \alpha_{0}$. Moreover, as shown in \cite[Th.\ 4.2]{bovo2024saddle}, it solves
\begin{align}\label{eq:ineqPDE_vx}
\big(v_{tx}+\cG v_x-\lambda v_x\big)(t,x)\geq -h_x(t,x),\quad (t,x)\in\cC,
\end{align}
in the sense of distributions and 
\begin{align}\label{eq:PDE_vx}
\big(v_{tx}+\cG v_x-\lambda v_x\big)(t,x)= -h_x(t,x),\quad (t,x)\in\cC\cap\cI,
\end{align}
in the classical sense.

The PDE \eqref{eq:PDE_vx} suggests the probabilistic representation for $v_x$, in the form of an optimal stopping problem. As proved in \cite[Th.\ 4.3]{bovo2024saddle}, for every $(t,y)\in[0,T]\times \cO$, we have that
\begin{align}\label{eq:optstop}
v_x(t,y)=\inf_{\tau\in\cT_t}\E\Big[\e^{-\int_0^\tau\lambda(Y_u^y)\ud u}\alpha_{0}\mathds{1}_{\{\tau< \tau_a(t,y)\}}+\int_{0}^{\tau\wedge\tau_a(t,y)}\!\e^{-\int_0^s\lambda(Y_u^y)\ud u}h_x(t+s,Y_s^y)\,\ud s\Big],
\end{align}
where $\tau_a(t,y)$ was defined in \eqref{eq:taua}, and that the stopping time $\sigma_*=\sigma_*(t,y)$ defined by
\begin{align}\label{eq:sigma*}
\sigma_*\coloneqq\inf\{s\ge 0 :v_x(t+s,Y_s^{y})\geq\alpha_{0}\}\wedge(T-t),
\end{align}
is optimal. Clearly, $\sigma_*$ admits the representation $\sigma_*\coloneqq\inf\{s\ge 0 :Y_s^{y}\ge b(t+s)\}\wedge(T-t)$.

\subsection{Optimal strategies}
We are now ready to present the two optimal strategies, one for the stopper and one for the controller. For the former, we have 
\begin{align}\label{eq:tau*}
\tau_*\coloneqq \inf\big\{s\ge 0:\min\big[v(t+s,X_s^{\nu;x}), v(t+s,X_{s-}^{\nu;x})\big]\le g(t+s)\big\}\wedge (T-t).
\end{align}
by \cite[Th.\ 2.3]{bovo2024halfline} (for the case $\cO=(0,\infty)$) and by \cite[Th.\ 2.3]{bovo2024variational} (for the case $\cO=\R$). 

The optimal strategy for the controller is described as follows. In \cite[Sec.\ 5]{bovo2024saddle} it is proved that there exists a pair $(X^{\nu^*},\nu^*)$ solution of the reflecting SDE along the boundary $b$ and that $\nu^*$ is an optimal control for the game. That is, given an initial point $(t,x)\in[0,T)\times\cO$, the pair $(X^{\nu^*},\nu^*)$ is such that:
\begin{itemize}
\item[(i)] $(X^{\nu^*},\nu^*)$ is $\F$-adapted;
\item[(ii)] $X^{\nu^*\!;x}_{0}=b(t)\wedge x$, $s\mapsto X^{\nu^*\!;x}_s\in\cO$ is continuous and its dynamics is given by \eqref{eq:SDE};
\item[(iii)] $\nu^*_0=-(x-b(t))^+$ and $s\mapsto\nu^*_s$ is continuous and non-increasing for $s\in[0,T-t]$;
\item[(iv)] The following conditions hold $\P$-a.s.:
\begin{align}\label{eq:refcond}
\begin{aligned}
X^{\nu^*\!;x}_s\le b(t+s),\quad\text{for $s\in[0,T-t]$} \quad \text{and} \quad
\int_{(0,T-t]}\mathds{1}_{\big\{X^{\nu^*\!;x}_s<b(t+s)\big\}}\ud \nu^*_s=0.
\end{aligned}
\end{align} 
\end{itemize} 
Moreover, $(X^{\nu^*},\nu^*)$ is unique, up to indistinguishability and can be expressed by
\begin{align}\label{eq:RSDE}
\begin{aligned}
&X^{\nu^*\!;x}_s=x+\int_0^s \mu(X^{\nu^*\!;x}_u)\ud u+\int_0^s\sigma(X^{\nu^*\!;x}_u)\ud W_u+\nu^*_s,\\
&\nu^*_s=-\sup_{0\le u\le s}\Big(x+\int_0^u \mu(X^{\nu^*\!;x}_v)\ud v+\int_0^u\sigma(X^{\nu^*\!;x}_v)\ud W_v-b(t+u)\Big)^+,
\end{aligned}
\end{align}
for $s\in[0,T-t]$, $\P$-a.s., and it holds (see \cite[Lem.\ 5.2]{bovo2024saddle})
\begin{align}\label{eq:nuL2}
\E\Big[\sup_{0\le s\le T-t}\big|X^{\nu^*\!;x}_s\big|^2+\big|\nu^*_{T-t}\big|^2\Big]<\infty.
\end{align}
Repeating the steps of the proof of \cite[Lem.\ 5.2]{bovo2024saddle} it is possible to extend the $L^2$-estimate therein to a $L^p$-estimate for any $p\in[1,\infty)$. The result is given below without proving it because it follows by standard arguments and repeating almost verbatim the proof given in \cite[App.\ A.2]{bovo2024saddle}.
\begin{corollary}\label{cor:Lpestnu}
Let $(t,x)\in[0,T)\times\cO$ and $(X^{x;\nu^*},\nu^*)$ be the solution of the reflected SDE along $b$. For any $p\in[1,\infty)$, there exists $K_p\in(1,\infty)$ (depending on $T$ and the linear growth of $\mu$ and $\sigma$), such that
\begin{align}\label{eq:nuLp}
\E\Big[\sup_{0\le s\le T-t}\big|X^{x;\nu^*}_s\big|^p+\big|\nu^*_{T-t}\big|^p\Big]<K_p(1+|x|^p).
\end{align}
\end{corollary}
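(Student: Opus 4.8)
The plan is to reproduce, for a general exponent $p\in[2,\infty)$, the classical a priori moment estimate for the reflected SDE \eqref{eq:RSDE}, combining the Burkholder--Davis--Gundy (BDG) inequality with Gr\"onwall's lemma exactly as in the $p=2$ argument of \cite[Lem.\ 5.2]{bovo2024saddle}; the remaining case $p\in[1,2)$ then follows at once from the case $p=2$ by Jensen's inequality, since $\E[Z^p]\le(\E[Z^2])^{p/2}$ and $(1+|x|^2)^{p/2}\le 1+|x|^p$. Write $Z_s\coloneqq x+\int_0^s\mu(X^{x;\nu^*}_u)\ud u+\int_0^s\sigma(X^{x;\nu^*}_u)\ud W_u$ for the driving part of the dynamics, so that by \eqref{eq:RSDE} we have $X^{x;\nu^*}_s=Z_s+\nu^*_s$ with $\nu^*_s=-\sup_{0\le u\le s}\big(Z_u-b(t+u)\big)^+$. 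By Proposition \ref{prop:monotone}(ii) the map $u\mapsto b(t+u)$ is continuous, hence bounded on $[0,T-t]$; write $\|b\|_\infty\coloneqq\sup_{0\le u\le T-t}|b(t+u)|<\infty$. Then $0\le -\nu^*_s\le\sup_{0\le u\le s}|Z_u|+\|b\|_\infty$ and therefore $\sup_{0\le u\le s}|X^{x;\nu^*}_u|\le 2\sup_{0\le u\le s}|Z_u|+\|b\|_\infty$, so it suffices to bound $\E[\sup_{0\le u\le T-t}|Z_u|^p]$.

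To do this I would localise: set $\rho_n\coloneqq\inf\{s\ge0:|X^{x;\nu^*}_s|\ge n\}\wedge(T-t)$, which increases to $T-t$ a.s.\ by pathwise continuity of $X^{x;\nu^*}$, and put $\phi_n(s)\coloneqq\E\big[\sup_{0\le u\le s\wedge\rho_n}|Z_u|^p\big]$, which is finite because $\mu(X^{x;\nu^*})$ and $\sigma(X^{x;\nu^*})$ are bounded on $[0,\rho_n]$. Using $|a_1+a_2+a_3|^p\le 3^{p-1}(|a_1|^p+|a_2|^p+|a_3|^p)$, H\"older's inequality for the time integral, the BDG inequality for the stochastic integral, and the linear growth of $\mu$ (Assumption \ref{ass:1}(i)) and of $\sigma$ (which is constant or linear by Assumption \ref{ass:1}(iii)), one gets, for a constant $C_p$ depending only on $p$, $T$ and the linear-growth constants of $\mu,\sigma$,
\[
\phi_n(s)\le C_p\big(1+|x|^p\big)+C_p\int_0^s\E\Big[\sup_{0\le v\le u\wedge\rho_n}\big|X^{x;\nu^*}_v\big|^p\Big]\ud u\le C_p'\big(1+|x|^p\big)+C_p\int_0^s\phi_n(u)\ud u,
\]
where in the last step I bounded $\sup_{0\le v\le u\wedge\rho_n}|X^{x;\nu^*}_v|^p\le 2^{p-1}\big(\sup_{0\le v\le u\wedge\rho_n}|Z_v|^p+\|b\|_\infty^p\big)$ and absorbed the extra constant into the first term. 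Gr\"onwall's lemma gives $\phi_n(T-t)\le C_p'(1+|x|^p)\e^{C_p(T-t)}$ uniformly in $n$, and Fatou's lemma as $n\to\infty$ yields $\E[\sup_{0\le s\le T-t}|Z_s|^p]\le K_p(1+|x|^p)$ for a suitable $K_p\in(1,\infty)$. Inserting this into the bounds of the previous paragraph gives \eqref{eq:nuLp} for $\sup_{0\le s\le T-t}|X^{x;\nu^*}_s|^p$, and the estimate for $|\nu^*_{T-t}|^p$ follows from $|\nu^*_{T-t}|\le\sup_{0\le s\le T-t}|Z_s|+\|b\|_\infty$.

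There is no genuine obstacle here; the argument is routine. The only two points deserving a word of care are (i) the localisation step, which is what makes BDG and Gr\"onwall rigorous before any integrability of $X^{x;\nu^*}$ is known a priori, and (ii) the use of Proposition \ref{prop:monotone}(ii) to record that $b$ is bounded on the compact interval $[0,T-t]$, which is precisely what lets the reflection term $\nu^*$ be dominated by the driving process $Z$. Beyond this, the computation is the verbatim $p$-analogue of the one carried out in \cite[App.\ A.2]{bovo2024saddle}.
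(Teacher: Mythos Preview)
Your proposal is correct and is exactly what the paper has in mind: the paper does not give a proof at all but simply states that the result ``follows by standard arguments and repeating almost verbatim the proof given in \cite[App.\ A.2]{bovo2024saddle}'', i.e., the $L^2$ estimate redone with BDG and Gr\"onwall for general~$p$, which is precisely your argument.

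One small point worth tightening: you invoke Proposition~\ref{prop:monotone}(ii) to conclude that $\|b\|_\infty=\sup_{0\le u\le T-t}|b(t+u)|<\infty$, but that proposition only asserts $b(s)<\infty$ for $s\in[0,T)$, and the representation $\cI=\{x<b(t)\}\cup(\{T\}\times\cO)$ in \eqref{eq:C-I-boundaries} is consistent with $b(T)=+\infty$. This is harmless, because $b$ is non-decreasing, so $(Z_u-b(t+u))^+\le(Z_u-b(t))^+$ and you can replace $\|b\|_\infty$ by $|b(t)|$ (finite for $t<T$) throughout; the resulting constant then depends on $b(t)$ in addition to $T$ and the growth constants, which is all that is needed in the subsequent applications.
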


The control $\nu^*$ is optimal and the pair $(\tau_*,\nu^*)\in\cT_t\times\cA_{t,x}$ yields a saddle point, i.e., 
\begin{align}\label{eq:saddle_def}
	\cJ_{t,x}(\tau,\nu^*)\leq \cJ_{t,x}(\tau_*,\nu^*)\leq \cJ_{t,x}(\tau_*,\nu)\qquad\text{for all $(\tau,\nu)\in\cT_t\times\cA_{t,x}$.}
\end{align}
It is worth noticing that the structure of $\nu^*$ implies that the process $X^{\nu^*}$ does not jump upwards. Therefore, at equilibrium, the expression for $\tau_*$ simplifies to 
\[
\tau_*(\nu^*)=\inf\{s\ge 0:X^{\nu^*}_s\le a(t+s)\}\wedge(T-t).
\]
\begin{remark}\label{rem:X-Markov}
 The controlled process $X^{\nu^*;x}$ is a strong Markov process because it is right-continuous and it is a Feller process (see, e.g., \cite[Th.\ 6.1]{baldi2017stochastic}). We recall that $X$ is Feller if
$(t,x)\mapsto\E_{t,x}[f(X_{t+h})]$
 is continuous for all $h>0$ and any $f$ continuous and bounded. The latter holds for the process $X^{\nu^*;x}$ because it is continuous by Kolmogorov's continuity theorem and the estimates that can be obtained by following, e.g., \cite[Ex.\ 1.2.1]{pilipenko2014reflection}.
\end{remark}
\subsection{Main result}

We are now ready to present the main theorem of the paper. Part of its proof will be developed in the next sections.

\begin{theorem}\label{thm:main}
 We have that $v\in C\big([0,T]\times\cO\big)\cap C^1\big([0,T)\times\cO\big)$ and $v_x\in C^1\big(\cC\cup\inter(\cS)\big)$.
\end{theorem}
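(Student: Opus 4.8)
Our plan is to split the statement into three parts: (a) the joint continuity of $v$ and of $v_x$, which is already contained in \eqref{eq:reg_v_vx}; (b) the continuity of $v_t$ on $[0,T)\times\cO$; and (c) $v_x\in C^1\big(\cC\cup\inter(\cS)\big)$, i.e.\ the continuity of $v_{xx}$ and $v_{tx}$ there. By Proposition \ref{prop:monotone} the curves $a,b$ are continuous and non-decreasing with $a(t)<b(t)$, so they partition $[0,T)\times\cO$ into the three (relatively) open regions $\inter(\cS)=\{x<a(t)\}$, $\cC\cap\cI=\{a(t)<x<b(t)\}$, $\inter(\cC\cap\cM)=\{x>b(t)\}$, plus the graphs of $a$ and $b$. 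On $\inter(\cS)$ we have $v\equiv g$ (using $g\le v$ and $v\le g$ on $\cS$), hence $v_x\equiv 0$ and $v_t\equiv\dot g$; on $\inter(\cC\cap\cM)$ we have $v_x\equiv\alpha_0$, hence $v_{xx}\equiv v_{tx}\equiv 0$; and on $\cC\cap\cI$ equation \eqref{eq:PDE_vx} is a uniformly non-degenerate parabolic PDE for $v_x$ (since $\sigma>0$) with locally H\"older coefficients and right-hand side $-h_x\in C^{0,1;\gamma}_{\ell oc}$, so interior Schauder estimates give $v_x\in C^{1,2;\gamma}_{\ell oc}(\cC\cap\cI)$ (whence also $v\in C^{1,2;\gamma}_{\ell oc}(\cC\cap\cI)$). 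Thus everything reduces to (i) the $C^1$-gluing of $v_x$ across the graph of $b$, with $v_{xx}(t,b(t))=v_{tx}(t,b(t))=0$, and (ii) the continuity of $v_t$ across the graph of $a$.

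For (i) we use the optimal stopping representation \eqref{eq:optstop} of $v_x$, with optimal rule $\sigma_*(t,y)=\inf\{s\ge0:Y^y_s\ge b(t+s)\}\wedge(T-t)$ and absorption time $\tau_a(t,y)$ from \eqref{eq:taua}. The first step (Section \ref{sec:convergence}) is to show that $(t,y)\mapsto\sigma_*(t,y)$ and $(t,y)\mapsto\tau_a(t,y)$ are $\P$-a.s.\ continuous; this rests on the continuity and strict monotonicity of $a$ and $b$ (Proposition \ref{prop:monotone}), on the regularity of their points for the diffusion $Y$, and on the a.s.\ local-uniform continuity of the flow $y\mapsto Y^y$. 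Writing $v_x(t,y)=\E[\Phi_{t,y}(\sigma_*(t,y))]$ for the functional $\Phi_{t,y}(\tau)$ appearing in \eqref{eq:optstop}, one then brackets the increment $v_x(t,y')-v_x(t,y)$ between $\E[\Phi_{t,y'}(\sigma_*(t,y'))-\Phi_{t,y}(\sigma_*(t,y'))]$ and $\E[\Phi_{t,y'}(\sigma_*(t,y))-\Phi_{t,y}(\sigma_*(t,y))]$ (using optimality of $\sigma_*$ at both starting points, and likewise for the $t$-increment); letting $y'\to y$ and invoking the a.s.\ continuity of the stopping times, the flow derivative $\partial_yY^y$, the smoothness of the density of $Y^y_t$ from Assumption \ref{ass:Ypdf}, and the $L^p$-bounds of Remark \ref{rmk:2gBm} and Corollary \ref{cor:Lpestnu} (to pass limits inside the expectations), both sides converge to the same limit. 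This yields closed-form expectation representations of $v_{xx}$ and $v_{tx}$ which, by the same tools, are continuous on all of $\cC$ --- these are Theorems \ref{thm:v_txCont} and \ref{thm:v_xxCont}.

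Letting $(t,y)\to(t_0,b(t_0))$ along $\cC\cap\cI$, one has $\sigma_*(t,y)\downarrow 0$ $\P$-a.s.\ (the points of $b$ are regular for $\cM$ w.r.t.\ $Y$, by the monotonicity of $b$), so the representations force $v_{xx}(t_0,b(t_0))=v_{tx}(t_0,b(t_0))=0$; together with $v_{xx}\equiv v_{tx}\equiv 0$ on $\inter(\cC\cap\cM)$ and $\inter(\cS)$ and the Schauder regularity on $\cC\cap\cI$, this gives $v_x\in C^1\big(\cC\cup\inter(\cS)\big)$. The continuity of $v_t$ then follows: on $\cC\cap\cI$, \eqref{eq:PDE} gives $v_t=rv-h-\mu v_x-\tfrac{\sigma^2}{2}v_{xx}$, continuous up to the graph of $b$ by (i); on $\inter(\cC\cap\cM)$, $v_{tx}\equiv 0$ forces $x\mapsto v_t(t,x)$ constant, equal to $v_t(t,b(t))$, and since $x\mapsto v_t(t,x)$ is continuous (its distributional $x$-derivative $v_{tx}$ lies in $L^p_{\ell oc}$, as $v\in W^{1,2;p}_{\ell oc}$) this constant equals the left-limit of the preceding expression at $b(t)$; hence $v_t$ is continuous on $\cC$, and on $\inter(\cS)$ it equals $\dot g$. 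The remaining point is (ii), the continuity of $v_t$ across the graph of $a$ (Theorem \ref{thm:v_tC0}, Section \ref{sec:vtcont}): it amounts to $\lim_{x\downarrow a(t)}v_t(t,x)=\dot g(t)$, equivalently $v_{xx}(t,a(t)+)=-2\Theta(t,a(t))/\sigma^2(a(t))$, which is $\ge 0$ and generically strictly positive --- the source of the jump of $v_{xx}$ across $a$, cf.\ Remark \ref{rmk:v_xx-disc}. This is proved separately, using the monotonicity of $t\mapsto v(t,y)-g(t)$ from Proposition \ref{prop:monotonicity} together with barrier/comparison estimates for $v$ near $a$ (where $v=g$ and $v_x=0$).

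The main obstacle lies behind (i): proving the $\P$-a.s.\ joint continuity of $\sigma_*(t,y)$ and $\tau_a(t,y)$ and then the smooth fit of $v_x$ at $b$. Since $b$ is only known to be continuous and non-decreasing --- not $C^1$ --- one cannot flatten it and apply boundary Schauder estimates, so one must work directly with the exit times of $Y$ through the moving curves $a$ and $b$ and control them uniformly in the starting data; this is where the strict monotonicity of the boundaries (ensuring that their points are regular and the exit times are not ``sticky'') and the strong Markov property of $Y$ enter, and where the interchange of limit and expectation must be justified by the $L^p$-estimates. The continuity of $v_t$ at $a$ in (ii) is the second, more self-contained, difficulty.
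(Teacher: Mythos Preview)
Your decomposition into (a), (b), (c) and the identification of the two gluing problems --- $v_x$ across $b$ and $v_t$ across $a$ --- is exactly the paper's structure, and your treatment of the interior regions and the derivation of $v_t$'s continuity on $\cM$ from $v_{tx}\equiv 0$ matches Corollary \ref{cor:v_t-cont-M}. The gap is in how you propose to obtain the regularity of $b$ for the diffusion $Y$.

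You invoke ``the strict monotonicity of the boundaries (ensuring that their points are regular and the exit times are not `sticky')'', but Proposition \ref{prop:monotone} only gives that $a$ and $b$ are \emph{non-decreasing} and continuous. For the lower boundary $a$ this is enough: the law of iterated logarithm makes $a(t)$ regular for $\{x<a(t)\}$ because $a$ is non-decreasing. For the \emph{upper} boundary $b$, however, non-decreasing is precisely the wrong direction: on a flat or rising stretch of $b$, there is no a priori reason why $Y$ started at $(t_0,b(t_0))$ should immediately enter $\{x\ge b(t+s)\}$, and the iterated-logarithm argument does not apply. The paper resolves this by reversing the order of your argument: it \emph{first} proves the smooth-fit $v_{xx}(t,b(t)-)=0$ directly (Proposition \ref{prop:limvxxb-}), via an It\^o--Tanaka formula and a local-time estimate that requires a preliminary lower bound on $v_{xxx}$ near $b$ (Lemma \ref{lem:vxxxbnd}, extracted from the PDE \eqref{eq:PDE_vx}); only \emph{then} does it deduce regularity of $b$ (Lemma \ref{lem:sigma*hat}) by contradiction, since $\hat\sigma>0$ a.s.\ would force $v_{xx}(t,b(t)-)>0$. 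A further non-trivial step is $\sigma_*=\check\sigma$ (Lemma \ref{lem:sigma*=check-sigma}), for which the paper adapts a delicate argument of Cox--Peskir exploiting the transition density; again the standard iterated-logarithm argument fails because $b$ is increasing. Without these two pieces, your bracketing scheme for $v_{xx}$ and $v_{tx}$ cannot be closed: you need $\sigma_*(t_n,y_n)\to 0$ at the boundary, and that is exactly what regularity of $b$ delivers.

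A smaller point: for (ii) the paper does not use ``barrier/comparison estimates'' but rather the saddle-point structure --- it brackets $v(t,x)-v(t-\eps,x)$ by taking the optimal control $\nu^*$ for $v(t,x)$ and a near-optimal stopping time for $v(t-\eps,x)$, and then invokes the a.s.\ convergence $\tau_*(\nu^*;t_n,x_n)\to 0$ at $\partial\cC$ (Proposition \ref{prop:tauconv}). Your monotonicity observation $v_t\le\dot g$ gives only one side.
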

\begin{proof}
 It was already recalled in Section \ref{sec:InitProp} that $v\in C\big([0,T]\times\cO \big)$ and that $v_x\in C\big([0,T)\times\cO \big)$ (see \eqref{eq:reg_v_vx}). The proof of $v_t\in C\big([0,T)\times\cO\big)$ is obtained by combining the fact that $v_t = \dot g$ on $\inter(\cS)$ together with Theorem \ref{thm:v_tC0} and the fact that $v_t$ is continuous on $\cC\cap\cI$ by \eqref{eq:PDE} together with Corollary \ref{cor:v_t-cont-M}. The proof of $v_x\in C^1\big(\cC\cup\inter{(\cS)}\big)$ is obtained combining Theorem \ref{thm:v_txCont}, Theorem \ref{thm:v_xxCont}, the fact that $v_{tx}$ and $v_{xx}$ are null on $\inter(\cM)\cup \inter(\cS)$, and continuous on $\cC\cap\cI$, since \eqref{eq:PDE_vx} holds, by classical interior regularity results for parabolic PDEs (see, e.g., \cite[Th.\ 3.10]{friedman2008partial}). 
\end{proof}

\section{Continuity of \texorpdfstring{$v_t$}{vₜ} at the boundary \texorpdfstring{$a$}{a}}\label{sec:vtcont}

In this section, we study the regularity of the time derivative $v_t$ at the boundary $a$, its proof is based on the properties of the optimal stopping time $\tau_*$ introduced in \eqref{eq:tau*}. We first prove two preliminary lemmas about $\tau_*$ and then we prove the continuity of $v_t$. Recalling that the stopping region $\cS$ is a closed set, let us introduce the stopping time
\begin{align*}
\tilde{\tau}_*=\tilde{\tau}_*(\nu)=\tilde{\tau}_*(\nu;t,x)\coloneqq&\inf\{s \ge 0:(t+s,X_s^{\nu;x})\in\inter(\cS)\}\wedge (T-t)\\
=&\inf\{s \ge 0:X_s^{\nu;x}<a(t+s)\}\wedge (T-t).
\end{align*}
The next lemma shows that $\tau_*(\nu^*)$ and $\tilde{\tau}_*(\nu^*)$ are equal $\P$-a.s. In other words, this means that the process $X^{\nu^*;x}$ enters immediately the region $\cS$ whenever it touches $\partial\cC$. 
\begin{lemma}\label{lem:tau°}
For all $(t,x)\in[0,T]\times\cO$ and $\nu^*$ as in \eqref{eq:RSDE}, it holds $\tau_*(\nu^*;t,x)=\tilde{\tau}_*(\nu^*;t,x)$, $\P$-a.s. Moreover, let $t<T$ such that $a(t)>\inf\cO$, then $\tau_*(\nu^*;t,a(t))=0$, $\P$-a.s.
\end{lemma}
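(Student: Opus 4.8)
The plan is to prove the two assertions in sequence. For the first one, $\tau_*(\nu^*)=\tilde\tau_*(\nu^*)$ $\P$-a.s., note that by definition $\tilde\tau_*(\nu^*)\geq\tau_*(\nu^*)$ always, since $\inter(\cS)\subseteq\cS$; so the only thing to rule out is that the controlled process $X^{\nu^*;x}$ spends a positive amount of time on $\partial\cC=\{x=a(t)\}$ before entering $\inter(\cS)=\{x<a(t)\}$. Here I would exploit the structure of $\nu^*$ from \eqref{eq:RSDE}: the process $X^{\nu^*;x}$ is continuous and, between its (downward) reflection pushes at $b$, it moves like the diffusion $X^0$; more importantly, when $X^{\nu^*;x}_s$ is at level $a(t+s)$ with $a(t+s)<b(t+s)$ by \eqref{eq:a<b}, no control is being exerted there (the reflection acts only at $b$), so locally $X^{\nu^*;x}$ behaves like an Itô diffusion with nondegenerate volatility $\sigma$. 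Since $t\mapsto a(t)$ is non-decreasing and continuous (Proposition \ref{prop:monotone}(i)) while an Itô diffusion started on a $C$-curve immediately crosses below any such curve with probability one — by the law of the iterated logarithm for Brownian motion, the Brownian component oscillates faster than the Lipschitz-or-monotone drift of $t\mapsto a(t+s)$ — the process enters $\{x<a(t+s)\}$ immediately. The cleanest way to phrase this: on the event $\{\tau_*(\nu^*)<T-t\}$, apply the strong Markov property at $\tau_*(\nu^*)$, reducing to showing that a process started exactly at $(t_0,a(t_0))$ with $a(t_0)<b(t_0)$ has $\tilde\tau_*=0$ $\P$-a.s.; this is exactly the content of the second assertion, so the two claims are really one.

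For the second assertion, fix $t<T$ with $a(t)>\inf\cO$ and start the controlled process at $x=a(t)$. By \eqref{eq:a<b} we have $a(t)<b(t)$, so $X^{\nu^*;a(t)}_0=b(t)\wedge a(t)=a(t)$ and $\nu^*_0=0$; moreover by continuity of $b$ and of the paths, there is a (random) time $\delta>0$ on which $X^{\nu^*;a(t)}_s<b(t+s)$, hence on $[0,\delta]$ the control $\nu^*$ does not act and $X^{\nu^*;a(t)}$ solves the plain SDE \eqref{eq:SDE0} started at $a(t)$. It therefore suffices to show that the uncontrolled diffusion $X^0$ started at $a(t)$ satisfies $\inf\{s\geq 0: X^0_s<a(t+s)\}=0$ $\P$-a.s. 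I would argue this by a time-change / comparison reduction to Brownian motion: writing $X^0_s = a(t) + \mu(a(t))s + \sigma(a(t))W_s + o(\sqrt{s})$ near $s=0$, and using that $a$ is non-decreasing so $a(t+s)-a(t)\geq 0$ is $O(s)$ or merely continuous and non-decreasing, the difference $X^0_s - a(t+s)$ is, for small $s$, dominated in sign by the Brownian term $\sigma(a(t))W_s$ (with $\sigma(a(t))>0$), which by Blumenthal's $0$–$1$ law takes strictly negative values in every neighbourhood of $s=0$, $\P$-a.s. The formal device is: $\{\inf\{s:X^0_s<a(t+s)\}=0\}$ is an event in $\cF_{0+}$, so by Blumenthal's $0$–$1$ law it has probability $0$ or $1$; and it has positive probability because on a short interval $X^0_s-a(t+s)\le \sigma(a(t))W_s + C s$ for a constant $C$ absorbing the drift and the modulus of continuity of $a$, and $\{W_s + Cs/\sigma(a(t))<0 \text{ for some } s\in(0,\varepsilon)\}$ has probability one for every $\varepsilon>0$.

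The main obstacle will be making the "locally the process is an Itô diffusion" step fully rigorous, i.e. controlling the interaction between the reflection at $b$ and the behaviour near $a$: one must ensure that on a small initial time interval the reflection term $\nu^*$ genuinely vanishes (which follows from $a(t)<b(t)$, continuity of $b$, and path-continuity, but the smallness of the interval is random and one should localise with a stopping time $\rho=\inf\{s:X^{\nu^*;a(t)}_s\geq b(t+s)\}$ and note $\rho>0$ $\P$-a.s.), and then compare $X^{\nu^*;a(t)}$ on $[0,\rho]$ with the plain diffusion $X^0$. A secondary technical point is handling the drift $\mu$ and the possibly merely-continuous (not Lipschitz) modulus of monotonicity of $a$ when bounding $X^0_s-a(t+s)$ from above on a short interval; since $\mu$ has linear growth and is continuous, and $a$ is continuous, both contributions are $o(1)$ uniformly on $[0,\varepsilon]$ as $\varepsilon\downarrow 0$, so they are absorbed by the Brownian fluctuations via the iterated-logarithm behaviour — this is where the nondegeneracy $\sigma>0$ from Assumption \ref{ass:1}(iii) is essential. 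Once the reduction to Brownian motion is in place, Blumenthal's $0$–$1$ law closes the argument; the global statement $\tau_*(\nu^*)=\tilde\tau_*(\nu^*)$ then follows by the strong Markov property of $X^{\nu^*;x}$ (Remark \ref{rem:X-Markov}) applied at the stopping time $\tau_*(\nu^*)$ on the event $\{\tau_*(\nu^*)<T-t\}$, since at that time the process sits on $\partial\cC$ at a point $(\tau_*,a(\tau_*+t))$ with $a<b$ there by \eqref{eq:a<b}, and on $\{\tau_*(\nu^*)=T-t\}$ there is nothing to prove.
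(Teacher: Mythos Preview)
Your proposal is correct and follows essentially the same route as the paper: establish the inequality $\tau_*\le\tilde\tau_*$, reduce the general case to the boundary case via the strong Markov property of $X^{\nu^*}$ at $\tau_*$, and show that the process started on $\partial\cC$ enters $\inter(\cS)$ immediately using nondegeneracy of the diffusion.

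Two points of comparison are worth noting. First, the paper handles the case $(t,x)\in\inter(\cS)$ explicitly (there $X^{\nu^*;x}_0\le x<a(t)$ so $\tau_*=\tilde\tau_*=0$); your strong-Markov reduction implicitly assumes $X^{\nu^*;x}_{\tau_*}=a(t+\tau_*)$, which is only guaranteed when $(t,x)\in\cC$ --- the missing case is trivial but should be stated. Second, and more interestingly, the paper bypasses entirely the ``main obstacle'' you identify (localising away from $b$ so that $X^{\nu^*}$ coincides with $X^0$): since $\nu^*$ is non-increasing, one has $X^{\nu^*;x}_s\le X^{0;x}_s$ for all $s$, hence $\tilde\tau_*(\nu^*)\le\tilde\tau_*(0)$ directly, and it suffices to show $\tilde\tau_*(0)=0$. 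The paper then uses monotonicity of $a$ to replace the moving boundary $a(t+s)$ by the constant level $a(t)$ and invokes \cite[Lem.\ V.46.1]{rogers2000diffusions}; your Blumenthal/iterated-logarithm argument achieves the same conclusion by a more hands-on route.
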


\begin{proof}
Fix $(t,x)\in[0,T]\times \cO$ and let $\nu^*$ be as in \eqref{eq:RSDE}. The result trivially holds for $t=T$ because $\tau_*(\nu^*;T,x)=\tilde{\tau}_*(\nu^*;T,x)=0$ by definition. It remains to prove the result for $t<T$. Notice that $\tau_*\le \tilde{\tau}_*$ because $\inter(\mathcal{S})\subset \mathcal{S}$. Since the set $[0,T]\times \cO$ can be split in $\cS$ and $\cC$, we divide the proof in three cases depending on whether $(t,x)$ belongs to $\inter(\cS)$, $\partial\cC$ or $\cC$. For almost every $\omega\in\Omega$ the mapping $t\to X_t^{\nu^*;x}(\omega)$ is continuous on $[0,T]$ (see (ii) above \eqref{eq:RSDE}) with a possible downward jump at $0$.

\textbf{Case 1:} $(t,x)\in \inter(\cS)$. It follows that $(t,X_0^{\nu^*;x}(\omega))\in\inter(\cS)$ because $X_{0}^{\nu^*;x}(\omega)\le X_{0-}^{\nu^*;x}(\omega)=x$, so that $\tau_*(\omega)=\tilde{\tau}_*(\omega)=0$. 

\textbf{Case 2:} $(t,x)\in\partial\cC$, i.e., $x=a(t)$ (with $a(t)>\inf\cO$). Recalling $\tau_*\leq \tilde{\tau}_*$, $\P$-a.s., it is sufficient to show that $\tilde{\tau}_*(\omega)=0$. For the clarity of the exposition, we restore in this case the dependence of the stopping times $\tau_*(\nu^*)$ and $\tilde{\tau}_*(\nu^*)$ on the control $\nu^*$. Let $X^{0;x}$ be the solution of the uncontrolled SDE \eqref{eq:SDE}, it holds $X^{\nu^*;x}_s(\omega)\leq X^{0;x}_s(\omega)$ for all $s\in[0,T-t]$ and almost every $\omega$ because, by construction, $\nu^*$ only pushes the process downwards. Therefore, we have $\tilde{\tau}_*(\nu^*)\leq \tilde{\tau}_*(0)$, $\P$-a.s.\ and it is enough to show $\tilde{\tau}_*(0)=0$, $\P$-a.s. By monotonicity of $a$ and \cite[Lem.\ V.46.1]{rogers2000diffusions}, we have that
$$\P\big(\exists \: \eps>0|X^{0;x}_s\geq a(t+s)\text{ for all }s\in[0,\eps]\big)\le\P\big(\exists \: \eps>0|X^{0;x}_s\geq a(t)\text{ for all }s\in[0,\eps]\big)=0,$$ i.e., $\tilde{\tau}_*(0)(\omega)=0$ for almost every $\omega$ and the result holds.

\textbf{Case 3:} $(t,x)\in\cC$. First, if $\tau_*(\omega)=T-t$, then the proof is completed because $T\!-\!t\!=\!\tau_*(\omega)\!\le\! \tilde{\tau}_*(\omega)\!\le\! T\!-\!t.$
Thus, we consider the case $\tau_*(\omega)<T-t$. By \eqref{eq:a<b}, the boundaries are separated, and a potential initial downward jump due to $\nu^*$ keeps the process inside $\cC$ (in particular, $X_0^{\nu^*;x}=b(t)$). After that, the process has continuous paths and so we have $(t+\tau_*(\omega),X_{\tau_*(\omega)}^{\nu^*;x}(\omega))\in\cS$, i.e., $X_{\tau_*(\omega)}^{\nu^*;x}(\omega)=a(\tau_*(\omega))$. By the strong Markov property of $X^{\nu^*;x}$ (Remark \ref{rem:X-Markov}) and \textbf{Case 2}, we have
\begin{align*}
&\P\big(\tau_*(\nu^*;t,x)<\tilde{\tau}_*(\nu^*;t,x)\big|X_{\tau_*}^{\nu^*;x}=a(t+\tau_*)\big)\\
&=\P\big(\tau_*(\nu^*;t+\tau_*,a(t+\tau_*))<\tilde{\tau}_*(\nu^*;t+\tau_*,a(t+\tau_*))\big|X_{0}^{\nu^*;a(t+\tau_*)}=a(t+\tau_*)\big)=0.
\end{align*}
Therefore, we have $\tau_*(\nu^*;t,x)=\tilde{\tau}_*(\nu^*;t,x)$, $\P$-a.s. for any $(t,x)\in[0,T]\times\cO$.
\end{proof}
The next result concerns the continuity of the stopping time $\tau_*(\nu^*)$ with respect to the initial data $(t,x)$ with $\nu^*$ as in \eqref{eq:RSDE}.
\begin{proposition}\label{prop:tauconv}
Let $(t,x)\in[0,T]\times\cO$ and $\nu^*$ as in \eqref{eq:RSDE}. For any $((t_n,x_n))_{n\in\N}\subset [0,T)\times\cO$ such that $(t_n,x_n)\to(t,x)$, it holds $\tau_*(\nu^*;t_n,x_n)\to\tau_*(\nu^*;t,x)$, $\P$-a.s. In particular, if $(t,x)\in\partial \cC$, then $\tau_*(\nu^*;t_n,x_n)\to0$.
\end{proposition}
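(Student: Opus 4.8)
The plan is to prove the a.s.\ convergence by splitting into the three cases analogous to those in Lemma \ref{lem:tau°}, depending on whether the limit point $(t,x)$ lies in $\inter(\cS)$, on $\partial\cC$, or in $\cC$. The main tool is that, for $\nu^*$ as in \eqref{eq:RSDE}, the controlled process $X^{\nu^*;x}$ depends continuously on the initial datum $(t,x)$: indeed, from the explicit Skorokhod-type formula \eqref{eq:RSDE} together with the $L^p$-estimate in Corollary \ref{cor:Lpestnu}, continuity of $\mu,\sigma$ and the continuity of $b$ (Proposition \ref{prop:monotone}), one gets that $\sup_{0\le s\le T-t_n}|X^{\nu^*;x_n}_s - X^{\nu^*;x}_s|\to 0$ a.s.\ (along a subsequence, then upgraded since the limit is deterministic), where the paths are suitably extended/re-indexed to a common interval. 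First I would establish this path-continuity carefully, paying attention to the possible initial jump of size $(x-b(t))^+$, which is itself continuous in $(t,x)$ by continuity of $b$.

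Next, I would treat the three cases. \textbf{Case $(t,x)\in\inter(\cS)$:} here $\tau_*(\nu^*;t,x)=0$ by Lemma \ref{lem:tau°}, and I must show $\tau_*(\nu^*;t_n,x_n)\to 0$. Since $\inter(\cS)$ is open and $(t,X_0^{\nu^*;x})\in\inter(\cS)$ (the initial jump only moves the point downward, hence deeper into $\cS$ as $a$ is non-decreasing — using $a(t)\le\underline\Theta(t)$ and convexity/monotonicity of $v$), path-continuity forces $(t_n,X_0^{\nu^*;x_n})\in\inter(\cS)$ for $n$ large, giving $\tau_*(\nu^*;t_n,x_n)=0$ eventually. \textbf{Case $(t,x)\in\cC$ (i.e.\ $x>a(t)$):} on $\{\tau_*(\nu^*;t,x)<T-t\}$, by Lemma \ref{lem:tau°} the process actually enters $\inter(\cS)$ strictly after time $\tau_*$, so for any $\delta>0$ the path $X^{\nu^*;x}$ strictly separates from $a(t+\cdot)$ on $[0,\tau_*-\delta]$ and strictly crosses below $a(t+\cdot)$ at some time in $(\tau_*,\tau_*+\delta]$; combining uniform path-convergence with uniform continuity of $a$ on compacts yields $\limsup_n\tau_*(\nu^*;t_n,x_n)\le\tau_*+\delta$ and $\liminf_n\tau_*(\nu^*;t_n,x_n)\ge\tau_*-\delta$; on $\{\tau_*(\nu^*;t,x)=T-t\}$ one gets the convergence from $\tau_*(\nu^*;t_n,x_n)\le T-t_n\to T-t$ together with the lower bound argument. \textbf{Case $(t,x)\in\partial\cC$:} here $\tau_*(\nu^*;t,x)=0$ by the second part of Lemma \ref{lem:tau°}; I must show $\tau_*(\nu^*;t_n,x_n)\to 0$. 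Using that $\tau_*(\nu^*)=\tilde\tau_*(\nu^*)$ a.s., that $X^{\nu^*;x_n}_s\le X^{0;x_n}_s$ (the control only pushes down), and comparing with the uncontrolled diffusion started near $x=a(t)$, the regularity of the boundary point $a(t)$ for $X^0$ (via \cite[Lem.\ V.46.1]{rogers2000diffusions} as in Case 2 of Lemma \ref{lem:tau°}) shows that $X^{0;x}$ — and hence $X^{\nu^*;x}$ — re-enters $\{y<a(t+s)\}$ immediately; path-continuity then transfers this to the sequence.

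The main obstacle I anticipate is \textbf{Case $(t,x)\in\partial\cC$}, the ``immediate entry'' assertion for the sequence: even though $X^{0;x}$ hits below $a(t)$ at arbitrarily small times a.s., one needs a quantitative lower-excursion estimate uniform in the perturbed starting data $(t_n,x_n)$ to conclude $\tau_*(\nu^*;t_n,x_n)\to 0$ a.s., rather than merely $\P(\tau_*(\nu^*;t_n,x_n)>\eps)\to 0$. I expect this to be handled by fixing $\eps>0$, using the strong Markov property and the regularity of the instant $\eps/2$ (where the perturbed process is, with high probability and for $n$ large, strictly inside $\cC$ by a distance bounded below along a full-measure set of $\omega$), and then reducing to Case 3 restarted from time $\eps/2$; alternatively, one invokes the a.s.\ convergence of hitting times of open sets for continuously-varying continuous processes once the boundary $a$ is crossed transversally, which is exactly the content developed in Section \ref{sec:convergence}. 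The remaining cases are comparatively routine given uniform path-convergence and the uniform continuity of $a$ and $b$ on compact time-intervals.
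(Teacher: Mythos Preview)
Your approach uses the same ingredients as the paper's proof (Lemma \ref{lem:tau°} and pathwise continuity of $(t,x)\mapsto X^{\nu^*;t,x}$ together with continuity of $a$), but the paper organises the argument more efficiently: rather than a three-case split, it proves the single pair of inequalities $\limsup_n\tau_*(\nu^*;t_n,x_n)\le\tau_*(\nu^*;t,x)$ and $\liminf_n\tau_*(\nu^*;t_n,x_n)\ge\tau_*(\nu^*;t,x)$ uniformly over all $(t,x)$.

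More importantly, your anticipated ``main obstacle'' at $\partial\cC$ dissolves completely. Once Lemma \ref{lem:tau°} gives $\tau_*=\tilde\tau_*=0$ at a boundary point, fix $\omega$ in the full-measure set where this holds: for every $s>0$ there is a time $\eps=\eps(\omega)\in(0,s)$ at which $X_\eps^{\nu^*;x}(\omega)<a(t+\eps)-\delta$ for some $\delta=\delta(\omega)>0$ (this is exactly the meaning of $\tilde\tau_*(\omega)=0$). Then continuity of $y\mapsto X^{\nu^*;y}_\eps(\omega)$ and of $u\mapsto a(u)$ gives $X_\eps^{\nu^*;x_n}(\omega)<a(t_n+\eps)$ for all large $n$, hence $\tau_*(\nu^*;t_n,x_n)(\omega)\le\eps<s$. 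This is a pointwise-in-$\omega$ argument and yields $\limsup_n\tau_*(\nu^*;t_n,x_n)=0$ almost surely directly --- no uniform quantitative excursion estimate, no restart at time $\eps/2$, and no appeal to Section \ref{sec:convergence} is required. Note that this is precisely your own limsup argument for the case $(t,x)\in\cC$, just with $\tau_*=0$; there is nothing special about the boundary here, which is why the paper does not single it out.
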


\begin{proof}
Let $(t,x)\in[0,T]\times\cO$ and $\nu^*$ be the process defined in \eqref{eq:RSDE} associated to the starting point $(t,x)$. If $t=T$, we have $\tau_*(\nu^*;T,x)=0$ and $\tau_*(\nu^*;t_n,x_n)\le T-t_n$ for any $n\in\N$, therefore we have $\tau_*(\nu^*;t_n,x_n)\downarrow \tau_*(\nu^*;T,x)$ as $n\to\infty$.

We consider now the case $t<T$. Fix $\omega\in\Omega$ such that $\tilde{\tau}_*(\omega)=\tau_*(\omega)$ by Lemma \ref{lem:tau°}. For any $s>\tilde{\tau}_*(\omega)$, there exists $\eps\in(\tilde{\tau}_*(\omega),s)$ such that $X_{\eps}^{\nu^*;x}(\omega)< a(t)-\delta$ for some $\delta=\delta(\omega)>0$. For all sufficiently large $n$, we have $X_{\eps}^{\nu^*;x_n}(\omega)<X_{\eps}^{\nu^*;x}(\omega)+\tfrac{\delta}{2}$ and $a(t_n)>a(t)-\frac{\delta}{2}$ by the continuity of the mapping $y\mapsto X^{\nu^*;y}(\omega)$ and $u\mapsto a(u)$, respectively. Combining the two inequalities we obtain
\begin{align*}
X_{\eps}^{\nu^*;x_n}(\omega)<X_{\eps}^{\nu^*;x}(\omega)+\tfrac{\delta}{2}<a(t)-\tfrac{\delta}{2}<a(t_n).
\end{align*}
Thus, $\tau_*(\nu^*;t_n,x_n)(\omega)\leq \eps$ for all sufficiently large $n$ yields $\limsup_{n\to\infty}\tau_*(\nu^*;t_n,x_n)(\omega)<s$. By the arbitrariness of $s$ and $\omega$, we have $\limsup_{n\to\infty}\tau_*(\nu^*;t_n,x_n)\leq \tau_*(\nu^*;t,x)$, $\P$-a.s.

Now, we prove $\liminf_{n\to\infty}\tau_*(\nu^*;t_n,x_n)\geq \tau_*(\nu^*;t,x)$, $\P$-a.s. If $t=T$ or $x\le a(t)$, then $\tau_*(\nu^*;t,x)=0$, $\P$-a.s and the result holds. Let $t<T$ and $x>a(t)$ so that $\tau_*(\nu^*;t,x)>0$, $\P$-a.s. Fix $\omega$ in a set of full measure so that $\tau_*(\nu^*;t,x)(\omega)>0$. By continuity of the maps $u\mapsto X^{\nu^*;x}_u(\omega)$ and $u\mapsto a(t+u)$, for any $s\in(0,\tau_*(\nu^*;t,x)(\omega))$ we have $X_u^{\nu^*;x}(\omega)>a(t+u)$ for all $u\in[0,s]$. In particular,
\begin{align*}
\inf_{u\in[0,s]}\big(X_u^{\nu^*;x}(\omega)-a(t+u)\big)\eqqcolon 2\delta>0.
\end{align*}
Then, by the continuity of $y\mapsto X_u^{\nu^*;y}$ and of $t\mapsto a(t+u)$ uniformly in $u\in[0,s]$, we obtain 
\begin{align*}
\inf_{u\in[0,s]}\big(X_u^{\nu^*;x_n}(\omega)-a(t_n+u)\big)\eqqcolon \delta>0,
\end{align*}
for all $n\in\N$ sufficiently large. Therefore, $\tau_*(\nu^*;t_n,x_n)(\omega)\ge s$ for all sufficiently large $n$ and so $\liminf_{n\to\infty}\tau_*(\nu^*;t_n,x_n)(\omega)\ge s$. By the arbitrariness of $s\in(0,\tau_*(\nu^*;t,x)(\omega))$, we obtain
\begin{align*}
\liminf_{n\to\infty}\tau_*(\nu^*;t_n,x_n)(\omega) \ge \tau_*(\nu^*;t,x)(\omega).
\end{align*}
By the arbitrariness of $\omega$, $\liminf_{n\to\infty} \tau_*(\nu^*;t_n,x_n)\ge \tau_*(\nu^*;t,x)$, $\P$-a.s. Combining this with the first part of the proof, we have $\tau_*(\nu^*;t_n,x_n)\to\tau_*(\nu^*;t,x)$ as $n\to\infty$, $\P$-a.s.
\end{proof}

The last result of this section shows the continuity of $v_t$ at the boundary $a$. Classical interior regularity results for parabolic PDEs \cite[Th.\ 3.10]{friedman2008partial} provide continuity of the time derivative inside $\cC\cap\cI$. In the interior of $\cS$ we have that $v=g$ and thus $v_t=\dot{g}$ for all $(t,x)\in\inter(\cS)$.

\begin{theorem}\label{thm:v_tC0}
The function $v_t$ is continuous at the boundary $\partial \cC$, i.e., for all $(t_0,x_0)\in\partial\cC$ and for any sequence $((t_n,x_n))_{n\in\N}\subset\cC$ such that $(t_n,x_n)\to(t_0,x_0)$ we have
\begin{align*}
\lim_{n\to\infty}v_{t}(t_n,x_n)=\dot{g}(t_0).
\end{align*}
Moreover, there exists $D_1>0$ such that
\begin{align}\label{eq:M_time}
|v_t(t,x)|\leq D_1(1+|x|^{2\vee p})
\end{align}
for all $(t,x)\in[0,T]\times\cO$ where $p$ comes from \eqref{eq:Theta_Lip_time}.
\end{theorem}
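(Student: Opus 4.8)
\emph{Plan.} The plan is to establish the two matching one-sided bounds $\limsup_n v_t(t_n,x_n)\le\dot g(t_0)$ and $\liminf_n v_t(t_n,x_n)\ge\dot g(t_0)$, together with the global estimate \eqref{eq:M_time}, through the probabilistic representation of $v$ via the saddle point $(\tau_*,\nu^*)$ and the monotonicity of Proposition~\ref{prop:monotonicity}. Fix $(t_0,x_0)\in\partial\cC$ with $t_0<T$ and $(t_n,x_n)\subset\cC$ with $(t_n,x_n)\to(t_0,x_0)$; by Proposition~\ref{prop:monotonicity}(i) we have $x_0=a(t_0)$, and by \eqref{eq:a<b} together with the continuity of $a,b$ we have $a(t_0)<b(t_0)$, so $(t_n,x_n)\in\cC\cap\cI$ for all large $n$ and hence $v\in C^{1,2;\gamma}$ near $(t_n,x_n)$ by interior parabolic regularity \cite[Th.\ 3.10]{friedman2008partial}, making $v_t(t_n,x_n)$ classically defined. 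The \emph{upper bound} is immediate: Proposition~\ref{prop:monotonicity}(ii) gives $v(t+\eps,y)-v(t,y)\le g(t+\eps)-g(t)$ for every $\eps>0$, so $v_t\le\dot g$ wherever $v_t$ exists, whence $\limsup_n v_t(t_n,x_n)\le\lim_n\dot g(t_n)=\dot g(t_0)$.

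For the \emph{lower bound} I would work with the forward difference quotient in $t$. Fix $(t,x)\in[0,T)\times\cO$ and $\eps\in(0,T-t)$; let $\nu^{*,\eps}$ be the reflected control along $b$ with datum $(t+\eps,x)$ (see \eqref{eq:RSDE}), $\bar\nu^{*,\eps}\in\cA_{t,x}$ its extension to horizon $T-t$ obtained by freezing it after $T-t-\eps$, and $\tau_*=\tau_*(\nu^*;t,x)$ the saddle stopping time at $(t,x)$. Since $\tau_*$ is the maximiser's optimal stopping time, $v(t,x)\le\cJ_{t,x}(\tau_*,\bar\nu^{*,\eps})$; since $\nu^{*,\eps}$ is the minimiser's optimal control at $(t+\eps,x)$, $v(t+\eps,x)=\sup_{\tau\in\cT_{t+\eps}}\cJ_{t+\eps,x}(\tau,\nu^{*,\eps})\ge\cJ_{t+\eps,x}(\tau_*\wedge(T-t-\eps),\nu^{*,\eps})$; subtracting,
\begin{align*}
v(t,x)-v(t+\eps,x)\le\cJ_{t,x}(\tau_*,\bar\nu^{*,\eps})-\cJ_{t+\eps,x}\big(\tau_*\wedge(T-t-\eps),\nu^{*,\eps}\big).
\end{align*}
The two payoffs are driven by the same controlled path and their control-cost terms cancel (because $\bar\nu^{*,\eps}$ is frozen past $T-t-\eps$), so the right-hand side involves only an $\eps$-shift in the time arguments of $g$ and $h$ plus a tail term on $\{\tau_*>T-t-\eps\}$. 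Writing $h=\Theta-\dot g+rg$, integrating the arising term $\e^{-rs}\big(r\phi(s)-\phi'(s)\big)$ by parts with $\phi(s)=g(t+s)-g(t+\eps+s)$, and bounding the remaining $\Theta$-increments via the time-Lipschitz estimate \eqref{eq:Theta_Lip_time}, one obtains
\begin{align*}
v(t,x)-v(t+\eps,x)\le-\int_0^\eps\dot g(t+u)\,\ud u+C\eps\,\E\Big[\int_0^{\tau_*}\!\e^{-rs}\big(1+|X^{\nu^{*,\eps}}_s|^p\big)\,\ud s\Big]+R(t,x,\eps),
\end{align*}
where $R$ is supported on $\{\tau_*>T-t-\eps\}$ and, after the same by-parts manipulation, is $\le C\eps$ times a random variable with expectation $O(1+|x|^{2\vee p})$ controlled through Corollary~\ref{cor:Lpestnu}.

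Two consequences follow. First, bounding crudely $\tau_*\le T-t$ and using Corollary~\ref{cor:Lpestnu} to dominate the bracket and $R$, dividing by $\eps$ and letting $\eps\downarrow0$ gives $v_t(t,x)\ge\dot g(t)-C(1+|x|^{2\vee p})$ a.e., which combined with $v_t\le\dot g$ is \eqref{eq:M_time}. Second, near $\partial\cC$ I would exploit that $\tau_*$ is small: by Lemma~\ref{lem:tau°}, $\tau_*=\tau_*(\nu^*;t_n,x_n)$ equals $\tilde{\tau}_*(\nu^*;t_n,x_n)=\inf\{s:X^{\nu^*;x_n}_s<a(t_n+s)\}\wedge(T-t_n)$, and since $\nu^*$ only pushes the state downwards, $X^{\nu^*;x_n}_s\le X^{0;x_n}_s$, so $\tau_*\le\tau^0_n:=\inf\{s:X^{0;x_n}_s<a(t_n+s)\}\wedge(T-t_n)$; as $x_n\to a(t_0)$ the uncontrolled diffusion started on the non-decreasing barrier $a$ enters $\{y<a(\cdot)\}$ immediately (cf.\ Case~2 of Lemma~\ref{lem:tau°}) and, by continuity of such entry times in the starting data (Section~\ref{sec:convergence}), $\tau^0_n\to0$ $\P$-a.s., hence $\E[(\tau^0_n)^2]\to0$ and $\P(\tau^0_n=T-t_n)\to0$. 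Crucially the $L^p$-estimate of Corollary~\ref{cor:Lpestnu} is uniform over the datum time, so $\E[\sup_s|X^{\nu^{*,\eps}}_s|^{2p}]\le K_{2p}(1+|x_n|^{2p})$ uniformly in $\eps$; by Cauchy--Schwarz the bracket above is $\le \tilde K\,\sqrt{\E[(\tau_*)^2]}$ and $\E[R(t_n,x_n,\eps)]\le C\eps\,\tilde K\,\sqrt{\P(\tau_*>T-t_n-\eps)}$, with $\tilde K$ independent of $\eps$. Dividing by $\eps$ and letting first $\eps\downarrow0$ (left side $\to v_t(t_n,x_n)$, $\P(\tau_*>T-t_n-\eps)\downarrow\P(\tau_*=T-t_n)\le\P(\tau^0_n=T-t_n)$) and then $n\to\infty$ ($\E[(\tau_*)^2]\le\E[(\tau^0_n)^2]\to0$, $\P(\tau^0_n=T-t_n)\to0$) yields $\liminf_n v_t(t_n,x_n)\ge\liminf_n\dot g(t_n)=\dot g(t_0)$, which together with the upper bound proves the claim.

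The principal difficulty will be the careful bookkeeping of the horizon-truncation remainder $R$: one must verify that on $\{\tau_*>T-t-\eps\}$ the $g$- and $h$-contributions still telescope to an $O(\eps)$ quantity (so $R/\eps$ stays bounded, giving \eqref{eq:M_time}) and that near $\partial\cC$ the probability $\P(\tau_*\ge T-t_n-\eps)$ vanishes in the iterated limit — which is precisely where the a.s.\ smallness of $\tau_*$ and the continuity of entry times of the uncontrolled diffusion (Section~\ref{sec:convergence}) are used — all while keeping every expectation uniformly dominated via Corollary~\ref{cor:Lpestnu}.
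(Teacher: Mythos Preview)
Your proposal is correct and follows essentially the same strategy as the paper: the upper bound via Proposition~\ref{prop:monotonicity}(ii), the lower bound via a saddle-point sub-optimality argument combined with the $\Theta$-rewriting and the time-Lipschitz estimate \eqref{eq:Theta_Lip_time}, and the conclusion via $\tau_*\to 0$ at $\partial\cC$. The only structural difference is a harmless duality: the paper works with the \emph{backward} increment $v(t,x)-v(t-\eps,x)$, keeps the control $\nu^*$ fixed at $(t,x)$ and lets the stopping time $\tau_*^\eps$ (optimal at $(t-\eps,x)$) vary, invoking Proposition~\ref{prop:tauconv} for $\tau_*^\eps\to\tau_*$; you work with the \emph{forward} increment, keep $\tau_*$ fixed at $(t,x)$ and let the control $\nu^{*,\eps}$ vary, replacing that convergence by the uniform-in-$\eps$ moment bounds of Corollary~\ref{cor:Lpestnu}. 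Both routes land on an inequality of the same form as \eqref{eq:convvt0}.

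One small correction: the reference to ``Section~\ref{sec:convergence}'' for the continuity of the entry time $\tau_n^0$ is misplaced---that section treats the auxiliary stopping times $\sigma_*$ for the process $Y$, not entry times of $X^0$ into $\cS$. What you actually need (immediate entry at $(t_0,a(t_0))$ plus continuity of the entry time in the starting data) is precisely the content of Lemma~\ref{lem:tau°} (Case~2) and Proposition~\ref{prop:tauconv}; citing those directly would close the argument without any extra work.
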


\begin{proof}
From Proposition \ref{prop:monotonicity}, we have that $v(t_n,x_n)-v(t_n-\eps,x_n)\leq g(t_n)-g(t_n-\eps)$ for every $n\in\N$ and every $\eps\in(0,t_n)$. Therefore, dividing by $\eps$ and letting $\eps\to 0$, we obtain $v_t(t_n,x_n)\leq \dot g(t_n)$ for every $n\in\N$ and so
$$\limsup_{n\to\infty} v_t(t_n,x_n)\leq \dot g(t_0).$$

Now consider an arbitrary $(t,x)\in\cC$. Let $\nu^*\in\cA_{t,x}$ be the optimal control for the game $v(t,x)$ defined in \eqref{eq:RSDE}. We extend the control $\nu^*$ after $T-t$ by $\nu_s^*=\nu_{T-t}^*$ for $s\in(T-t,T-t+\eps]$. Let $\tau_*^\eps\in\cT_{t-\eps}$ be optimal for $v(t-\eps,x)$ and let $\tau^\eps=\tau_*^\eps\!\wedge\!(T\!-\!t)\in\cT_t$. Then, we have
\begin{align*}
&v(t,x)\!-\!v(t\!-\!\eps,x)\\
&\geq \E_x\Big[\e^{-r\tau^\eps}g(t\!+\!\tau^\eps)\!-\!\e^{-r\tau_*^\eps}g(t\!-\!\eps\!+\!\tau_*^\eps)\!+\!\int_0^{\tau^\eps}\!\e^{-rs}h(t\!+\!s,X^{\nu^*}_s)\ud s\!-\!\int_0^{\tau^\eps_*}\!\e^{-rs}h(t\!-\!\eps\!+\!s,X^{\nu^*}_s)\ud s\Big]\\
&=g(t)\!-\!g(t\!-\!\eps)\!+\!\E_x\Big[\int_0^{\tau^\eps}\!\e^{-rs}\big(\Theta(t\!+\!s,X_s^{\nu^*})\!-\!\Theta(t\!-\!\eps\!+\!s,X_s^{\nu^*})\big)\ud s\!-\!\int_{\tau^\eps}^{\tau^\eps_*}\!\e^{-rs}\Theta(t\!-\!\eps\!+\!s,X_s^{\nu^*})\ud s\Big]
\end{align*}
where $\Theta$ was defined in \eqref{eq:Theta}, the integrals with respect to the total variation $|\nu^*|_s$ are equal when computed on the interval random $[0,\tau^\eps]$ and $0$ on the random interval $(\tau^\eps,\tau^\eps_*]$. Using \eqref{eq:Theta_Lip_time} in the inequality above, we obtain
\begin{align*}
v(t,x)\!-\!v(t\!-\!\eps,x)\geq g(t)\!-\!g(t\!-\!\eps)\!-\!\eps C\E\Big[\tau^\eps\!\!\!\sup_{0\leq s\leq T-t}\big(1\!+\!|X_{s}^{\nu^*}|^p\big)\Big]\!-\!\E\Big[\int_{\tau^\eps}^{\tau_*^\eps}\!\!\e^{-rs}\Theta(t\!-\!\eps\!+\!s,X_{s}^{\nu^*})\,\ud s\Big]\!.
\end{align*}
Recalling that $\tau^\eps=\tau^\eps_*\wedge(T-t)\leq \tau_*^\eps$, $\P$-a.s., and that for any fixed $p\in[1,\infty)$, there exists $K_p>1$ such that $\E[\sup_{0\leq s\leq T-t+\eps}(1+|X^{\nu^*}_s|^p)]<K_p(1+|x|^p)$ by construction of the optimal strategy $\nu^*$ (see Corollary \ref{cor:Lpestnu} and recalling $\nu_u^*=\nu_{T-t}^*$ for $u\ge T-t$), using H\"older's inequality, we obtain
\begin{align*}
&v(t,x)\!-\!v(t\!-\!\eps,x)\\
&\ge g(t)\!-\!g(t\!-\!\eps)\!-\!\eps C\Big(\E\Big[\sup_{0\leq s\leq T-t}(1\!+\!|X_{s}^{\nu^*}|^p)^2\Big]\E\big[\big(\tau^\eps_*\!\wedge\!(T\!-\!t)\big)^2\big]\Big)^{1/2}\!\!-\!\E\Big[\int_{\tau^\eps}^{\tau_*^\eps}\!\!\e^{-rs}\Theta(t\!-\!\eps\!+\!s,X_{s}^{\nu^*})\ud s\Big]\\
&\ge g(t)\!-\!g(t\!-\!\eps)\!-\!\eps C\Big(4K_{2p}(1+|x|^{2p})\E\big[\big(\tau^\eps_*\wedge(T\!-\!t)\big)^2\big]\Big)^{1/2}\!-\!\E\Big[\int_{\tau_*^\eps\wedge(T-t)}^{\tau_*^\eps}\!\!\e^{-rs}|\Theta(t\!-\!\eps\!+\!s,X_{s}^{\nu^*})|\ud s\Big],
\end{align*}
where we used $(1+|x|)^2\le 2(1+|x|^2)$ in the first square root with also $2(1+K_{2p}(1+|x|^{2p}))\le 4K_{2p}(1+|x|^{2p})$ and we restored the definition of $\tau^\eps$.

We recall the convergence of $\tau_*^\eps\to\tau_*$ as $\eps\to0$ by Proposition \ref{prop:tauconv}. Noticing that the size of the random interval $[\tau_*^\eps\wedge(T-t),\tau_*^\eps]$ is smaller than $\eps$, for all $\omega\in\Omega$, dividing by $\eps$ the inequality above and sending $\eps\to0$, we have that
\begin{align*}
v_t(t,x)\geq \dot{g}(t)-C\Big(4K_{2p}(1+|x|^{2p})\E\big[\tau_*^2\big]\Big)^{1/2}-\E\big[\mathds{1}_{\{\tau_*=T-t\}}|\Theta(T,X_{T-t}^{\nu^*})|\big].
\end{align*}
Using H\"older's inequality in the last term above and noticing that $\Theta(t,x)^2\leq C'(1+|x|^4)$ by Assumption \ref{ass:1} for a suitable $C'>0$, we obtain
\begin{align}\label{eq:convvt0}
v_t(t,x)\geq \dot{g}(t)-C\Big(4K_{2p}(1+|x|^{2p})\E\big[\tau_*^2\big]\Big)^{1/2}-\Big(C'K_4(1+|x|^4)\E\big[\mathds{1}_{\{\tau_*=T-t\}}\big]\Big)^{1/2},
\end{align}
where $K_4$ comes from Corollary \ref{cor:Lpestnu}. 

The latter inequality holds, in particular, at $(t_n,x_n)\in\cC$ for every $n\in\N$. Since $\tau_*(\nu^*;t_n,x_n)\to\tau_*(\nu^*;t_0,x_0)= 0$, $\P$-a.s.\ by Proposition \ref{prop:tauconv}, we obtain 
\[
\liminf_{n\to\infty}v_t(t_n,x_n)\ge \dot{g}(t).
\]
Hence, $v_t$ is continuous at the boundary $\partial\cC$.

To conclude the proof, we show \eqref{eq:M_time}. Using $\tau_*\le T$, $\P$-a.s. and taking the minimum of $\dot{g}$, we have from \eqref{eq:convvt0}
\begin{align*}
v_t(t,x)\geq \min_{t\in[0,T]}\dot{g}(t)-2TC\sqrt{K_{2p}}(1+|x|^{p})-2\sqrt{C'K_4}(1+|x|^2).
\end{align*}
Recalling that $ v_t(t,x)\le \dot{g}(t)$ and taking the absolute value, we obtain
\begin{align*}
|v_t(t,x)|\le \max_{t\in[0,T]}|\dot{g}(t)| +2TC\sqrt{K_{2p}}(1+|x|^{p})+2\sqrt{C'K_4}(1+|x|^2)\le D_1(1+|x|^{2\vee p}),
\end{align*}
for all $(t,x)\in[0,T]\times\cO$ and some $D_1>0$.
\end{proof}

\section{Smooth-fit and convergence of optimal stopping times for the auxiliary problem}\label{sec:convergence}

In this section, we show some properties on the regularity of the value function of the auxiliary optimal stopping problem \eqref{eq:optstop} and on the regularity of the underlying process with respect to the free-boundary $b$. In particular, we prove that the smooth-fit condition at $b$ holds and that the optimal stopping times $\sigma^*(s,y)$ converge to $\sigma^*(t,x)$, $\P$-a.s., as $(s,y)\to (t,x)\in [0,T]\times\cO$.

The first lemma concerns the local boundedness of the second and third spatial derivatives and leads to the smooth-fit condition (Proposition \ref{prop:limvxxb-}).

\begin{lemma}\label{lem:vxxxbnd}
For every $t<T$, there exist $\eps>0$, $D_2=D_2(t,\eps)>0$ and $D_3=D_3(t,\eps)>0$ such that $|v_{xx}(s,x)|\leq D_2$ and $v_{xxx}(s,x)> -D_3$, for all $s\in[0,t]$ and $x\in[b(s)-\eps,b(s)+\eps]\setminus\{b(s)\}$.
\end{lemma}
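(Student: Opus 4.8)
The strategy is to exploit the fact that, on $\cC \cap \cI$, the function $v_x$ solves the linear parabolic PDE \eqref{eq:PDE_vx}, namely $v_{tx} + \cG v_x - \lambda v_x = -h_x$, while on $\inter(\cM)$ we have $v_x \equiv \alpha_0$, so $v_{xx} = v_{xxx} = 0$ there. Fix $t < T$. By Proposition \ref{prop:monotone}(ii), $b$ is non-decreasing and continuous on $[0,T]$ with $b(s) < \infty$ for $s \le t$, so there is a compact interval $I \subset \cO$ and an $\eps > 0$ with $[b(s)-\eps, b(s)+\eps] \subset I$ for all $s \in [0,t]$, and the set $Q \coloneqq \{(s,x) : s \in [0,t],\ x \in [b(s)-\eps, b(s)+\eps]\}$ is a compact subset of $[0,T) \times \cO$. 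On $Q$ the coefficients $\mu, \sigma, \sigma\sigma_x, \lambda$ are smooth and bounded (using Assumption \ref{ass:1}), and $h_x \in C^{0,1;\gamma}_{\ell oc}$ by Assumption \ref{ass:2}(ii), hence $h_{xx}$ is H\"older on $Q$.

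The key observation is that $w \coloneqq v_x$ is globally Lipschitz in space (since $0 \le v_x \le \alpha_0$ and $v_{xx} \in W^{1,2;p}_{\ell oc}$, cf.\ \eqref{eq:reg_v_vx}), so $w$ is a bounded, spatially-Lipschitz solution of a uniformly parabolic equation in $Q$, with $C^\gamma$ right-hand side $-h_x$. By interior Schauder estimates for parabolic PDEs (e.g.\ \cite[Th.\ 3.10]{friedman2008partial} applied on $\cC \cap \cI$, and the triviality of $w$ on $\inter(\cM)$), $v_{xx}$ is bounded on $Q$: inside $\cC \cap \cI$ this follows from Schauder theory applied to $w$ with the bound depending only on $\|w\|_\infty$, the ellipticity constants, and $\|h_x\|_{C^\gamma}$ over a slightly larger compact set; on $\inter(\cM)$ it is zero; and the two match continuously across $b$ because $v_{xx}$ is continuous there (this is where Proposition \ref{prop:limvxxb-} / the smooth-fit statement, which this lemma is meant to feed into, must be handled with care — so the argument here should only use one-sided limits, not full continuity of $v_{xx}$ at $b$). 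This gives the bound $|v_{xx}(s,x)| \le D_2$.

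For the one-sided bound $v_{xxx} > -D_3$ on $[b(s)-\eps, b(s)+\eps] \setminus \{b(s)\}$, differentiate \eqref{eq:PDE_vx} once more in $x$ (legitimate in $\cC \cap \cI$ by the Schauder bootstrap, since the coefficients and $h_{xx}$ are H\"older there): $u \coloneqq v_{xx}$ solves $u_t + \cG u + (\mu_x + (\sigma\sigma_x)_x - \lambda) u - \lambda_x u = -h_{xx} + (\text{terms in } u)$, a linear parabolic equation with H\"older coefficients and H\"older forcing on $Q \cap (\cC \cap \cI)$. Schauder estimates then bound $v_{xxx} = u_x$ on the part of $Q$ lying in $\cC \cap \cI$ (i.e.\ $x < b(s)$ near the boundary, recalling \eqref{eq:a<b} so that $\cM$ is approached only from $\cC$), while on $\inter(\cM)$ (i.e.\ $x > b(s)$) we have $v_{xxx} = 0 > -D_3$. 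Taking $D_3$ to be the Schauder bound gives the claim. \textbf{The main obstacle} is ensuring the Schauder constants are uniform in $s \in [0,t]$ as $x$ approaches the moving boundary $b(s)$: one must either straighten the boundary $b$ by a change of variables $(s,x) \mapsto (s, x - b(s))$ — but $b$ need not be $C^1$ a priori, so instead work with interior estimates on subdomains $\{x < b(s)\} \cap Q$ and use that $h_x$ satisfies the sign/monotonicity conditions of Assumption \ref{ass:2}(ii) together with Assumption \ref{ass:3}(iii), which controls the behaviour of $v_{xx}$ precisely at $b$ — and to handle the fact that the estimate is only one-sided because from the $\cM$-side the third derivative vanishes identically while from the $\cC$-side it is merely bounded below by Schauder theory, not necessarily bounded above (reflecting the expected jump in $v_{xx}$ across the stopping-type boundary).
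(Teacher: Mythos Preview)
Your Schauder-based plan has a genuine gap that you yourself identify but do not close: interior Schauder estimates on $\cC\cap\cI$ produce bounds on $v_{xx}$ and $v_{xxx}$ with constants depending on the distance to $\partial(\cC\cap\cI)$, and these constants blow up as $x\uparrow b(s)$. Straightening the boundary would require regularity of $b$ that is not yet available, and you offer no concrete alternative. (A smaller issue: ``$w=v_x$ globally Lipschitz in space since $0\le v_x\le\alpha_0$ and $v_{xx}\in W^{1,2;p}_{\ell oc}$'' is not justified --- boundedness of $v_x$ does not give Lipschitz, and only $v\in W^{1,2;p}_{\ell oc}$ is known, not $v_{xx}\in W^{1,2;p}_{\ell oc}$.) The remark that the one-sided $v_{xxx}$ bound arises because Schauder ``is merely bounded below'' is also off: Schauder, when it applies, gives two-sided bounds.

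The paper avoids Schauder altogether and the argument is much shorter. It simply isolates the highest spatial derivative algebraically from the PDE. From $v_t+\cL v-rv=-h$ on $\cC\cap\cI$ one reads
\[
v_{xx}(s,x)=\tfrac{2}{\sigma^2(x)}\big[-v_t(s,x)-\mu(x)v_x(s,x)+rv(s,x)-h(s,x)\big],
\]
and every term on the right is already continuous or locally bounded (the local bound on $v_t$ comes from Theorem~\ref{thm:v_tC0}); together with $v_{xx}\ge 0$ from convexity this gives $|v_{xx}|\le D_2$ uniformly up to $b(s)^-$. For $v_{xxx}$ the same trick on \eqref{eq:PDE_vx} gives
\[
v_{xxx}=\tfrac{2}{\sigma^2}\big[-v_{tx}-(\sigma\sigma_x+\mu)v_{xx}+(r-\mu_x)v_x-h_x\big],
\]
and the key step is to use the \emph{sign} $v_{tx}\le 0$ (Proposition~\ref{prop:monotonicity}(iii)) rather than a bound on $v_{tx}$: dropping $-v_{tx}\ge 0$ yields $v_{xxx}\ge -D_3$ directly. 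This is precisely why the bound on $v_{xxx}$ is one-sided --- it comes from the monotonicity of $t\mapsto v_x(t,y)$, not from any regularity estimate.
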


\begin{proof}
Fix $t<T$. Recalling $a(t)<b(t)<\infty$, by combining \eqref{eq:a<b} and Proposition \ref{prop:monotone}(ii), there exists $\eps>0$ such that $B_{t,\eps}\coloneqq \{(s,x)\in[0,t]\times\cO:x\in(b(s)-\eps,b(s))\}\subseteq \cC\cap\cI$. For $x>b(s)$, we have $v_x(s,x)= \alpha_0$ and so $v_{xx}(s,x)=0$. Below $b(s)$, the value function $v$ satisfies the PDE in \eqref{eq:PDE} and, rewriting the equation, we obtain
\begin{align*}
v_{xx}(s,x)=\tfrac{2}{\sigma^2(x)}\Big[-v_t(s,x)-\mu(x)v_x(s,x)+rv(s,x)-h(s,x)\Big]
\end{align*}
for $x\in(b(s)-\eps,b(s))$. Since $\sigma^2(x)>0$ on $\cO$ by Assumption \ref{ass:1}, the functions $\mu$, $v$, $v_x$, $h$ are continuous and $v_t$ is locally bounded by Theorem \ref{thm:v_tC0}, we obtain $v_{xx}(s,x)\le D_2$ for all $(s,x)\in \bar{B}_{t,\eps}= [0,t]\times [b(s)-\eps,b(s)]$ (where we understand $v_{xx}(s,b(s))\le D_2$ as $\limsup_{n\to\infty} v_{xx}(s_n,x_n)\le D_2$ with $(s_n,x_n)\to (s,b(s))$ and $((s_n,x_n))_{n\in\N}\subset \cC\cap\cI$). Recalling that $v$ is convex by Proposition \ref{prop:monotonicity}(i), $v_{xx}$ is non-negative and so $|v_{xx}(s,x)|\leq D_2$ for all $s<t$ and $x\in[b(s)-\eps,b(s)+\eps]\setminus\{b(s)\}$.

Similarly, we obtain a lower bound for $v_{xxx}$. Since $v_x\equiv \alpha_0$ above $b$, it follows that $v_{xxx}(s,x)=0$ for $x>b(s)$. Below the boundary $b$, the function $v_x$ satisfies \eqref{eq:PDE_vx} and rewriting that PDE yields
\begin{align*}
v_{xxx}(s,x)=&\,\tfrac{2}{\sigma^2(x)}\Big[\!-\!v_{tx}(s,x)\!-\!\big(\sigma(x)\sigma_x(x)\!+\!\mu(x)\big)v_{xx}(s,x)\!+\!\big(r\!-\!\mu_x(x)\big)v_x(s,x)\!-\!h_x(s,x)\Big]\\
\ge&\,\tfrac{2}{\sigma^2(x)}\Big[\!-\!\big(\sigma(x)\sigma_x(x)\!+\!\mu(x)\big)v_{xx}(s,x)\!+\!\big(r\!-\!\mu_x(x)\big)v_x(s,x)\!-\!h_x(s,x)\Big]
\end{align*}
where we used that $v_{tx}$ is non-positive, in the interior of $\cC\cap\cI$, by Proposition \ref{prop:monotonicity}(iii). Since $\sigma$, $\sigma_x$, $\mu$, $\mu_x$, $h_x$ are continuous by Assumption \ref{ass:1}, $v$, $v_x$ are continuous by \eqref{eq:reg_v_vx} and $v_{xx}$ is locally bounded, then there exists $D_3>0$ such that $v_{xxx}(s,x)\ge-D_3$ for all $s<t$ and $x\in[b(s)-\eps,b(s)+\eps]\setminus\{b(s)\}$ (where we understand $v_{xxx}(s,b(s))\ge -D_3$ as $\liminf_{n\to\infty} v_{xxx}(s_n,x_n)\ge- D_3$ with $(s_n,x_n)\to (s,b(s))$ and $((s_n,x_n))_{n\in\N}\subset \cC\cap\cI$).
\end{proof}

The lower bound on $v_{xxx}$ allows the use of It\^o-Tanaka's formula which leads to the following smooth-fit condition at $b$. Here, we are inspired by estimates obtained in \cite[Lem.\ 11]{deangelis2021assets}.

\begin{proposition}\label{prop:limvxxb-}
For every $t<T$, we have
\begin{align*}
v_{xx}(t,b(t)-)\coloneqq \lim_{\eps\to0}\frac{v_x(t,b(t))-v_x(t,b(t)-\eps)}{\eps}= 0.
\end{align*}
\end{proposition}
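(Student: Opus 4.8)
\emph{Proof idea.} I would derive the smooth fit from It\^o--Tanaka's formula applied to the value $v_x$ of the auxiliary stopping problem \eqref{eq:optstop}, evaluated along the diffusion $Y$ of \eqref{eq:SDE_Y} started \emph{on} the boundary, exploiting the local time that $Y$ accumulates on the curve $b$; this is in the spirit of \cite[Lem.\ 11]{deangelis2021assets}.

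\emph{Step 1 (reduction).} Fix $t_0<T$, set $\beta\coloneqq b(t_0)$, and apply Lemma \ref{lem:vxxxbnd} with some $t\in(t_0,T)$ to obtain $\eps,D_2,D_3>0$ with $0\le v_{xx}\le D_2$ and $v_{xxx}>-D_3$ on $\{(s,x):s\le t,\ x\in[b(s)-\eps,b(s)+\eps]\setminus\{b(s)\}\}$. For each fixed $u<T$ the map $x\mapsto v_{xx}(u,x)+D_3x$ is nondecreasing and bounded on $(b(u)-\eps,b(u))$, so $\psi(u)\coloneqq\lim_{x\uparrow b(u)}v_{xx}(u,x)$ exists and lies in $[0,D_2]$; by the fundamental theorem of calculus and $v_x(u,b(u))=\alpha_0$, it equals $\lim_{\eps\to0}\eps^{-1}(v_x(u,b(u))-v_x(u,b(u)-\eps))$, i.e.\ the quantity in the statement. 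Since $\psi(u)+D_3b(u)=\sup_n\big[v_{xx}(u,b(u)-1/n)+D_3(b(u)-1/n)\big]$ is an increasing supremum of functions of $u$ continuous (continuity of $v_{xx}$ on $\cC\cap\cI$ and of $b$), $\psi$ is lower semicontinuous. It thus suffices to exclude $\psi(t_0)>0$.

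\emph{Step 2 (It\^o--Tanaka).} Suppose $\psi(t_0)>0$; by lower semicontinuity fix $c>0$, $\delta_0>0$ with $\psi\ge c$ on $[t_0,t_0+\delta_0]$. Let $Y=Y^\beta$, $\tau_a=\tau_a(t_0,\beta)$; since $\beta>a(t_0)$ by \eqref{eq:a<b}, continuity of the paths of $Y$ and of $a$ gives $\tau_a>0$ $\P$-a.s. For $\delta\in(0,\delta_0]$ put $\rho\coloneqq\delta\wedge\tau_a$. The function $v_x$ is continuous, solves \eqref{eq:PDE_vx} classically on $\cC\cap\cI$, equals $\alpha_0$ on $\inter(\cM)$, and by Lemma \ref{lem:vxxxbnd} is, near the curve $b$, a difference of convex functions in $x$ with coefficients locally bounded in time; hence It\^o--Tanaka's change-of-variables formula with local time on the curve $b$ applies to $Z_s\coloneqq\e^{-\int_0^s\lambda(Y_u)\ud u}v_x(t_0+s,Y_s)$ on $[0,\rho]$. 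As $v_{xx}$ drops from $\psi(u)$ to $0$ across $b(u)$, this produces
\begin{align*}
Z_\rho=\alpha_0+\int_0^\rho\e^{-\int_0^s\lambda}\big(v_{tx}+\cG v_x-\lambda v_x\big)(t_0+s,Y_s)\,\ud s+M_\rho-\tfrac12\int_0^\rho\e^{-\int_0^s\lambda}\,\psi(t_0+s)\,\ud\ell^b_s(Y),
\end{align*}
with $M$ a martingale and $\ell^b(Y)\ge0$ the local time of $Y$ at $b$.

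\emph{Step 3 (contradiction).} Take expectations. Bounding the drift below by $-h_x$ via \eqref{eq:ineqPDE_vx} (valid a.e.\ on $\cC$, and $\cM\subset\cC$) gives $\E[Z_\rho]\ge\alpha_0-\E\big[\int_0^\rho\e^{-\int_0^s\lambda}h_x\,\ud s\big]-\tfrac12\E\big[\int_0^\rho\e^{-\int_0^s\lambda}\psi(t_0+s)\ud\ell^b_s\big]$; computing the drift exactly ($-h_x$ where $Y<b$ by \eqref{eq:PDE_vx}, $-\lambda\alpha_0$ where $Y\ge b$ since $v_x\equiv\alpha_0$) and subtracting the two relations yields
\begin{align*}
\E\Big[\int_0^\rho\e^{-\int_0^s\lambda}\big(h_x(t_0+s,Y_s)-\lambda(Y_s)\alpha_0\big)\ind_{\{Y_s\ge b(t_0+s)\}}\,\ud s\Big]\ \ge\ \tfrac12\,\E\Big[\int_0^\rho\e^{-\int_0^s\lambda}\,\psi(t_0+s)\,\ud\ell^b_s(Y)\Big].
\end{align*}
The left side is $\le C_1\delta$ (the integrand is $\le C(1+|Y_s|^2)$ since $\lambda$ is bounded --- $\mu_x$ is nondecreasing by convexity and bounded above by Assumption \ref{ass:1} --- and $|h_x|\le c(1+|\cdot|^2)$, while $\e^{-\int_0^s\lambda}$ and $\E[\sup_{s\le T}|Y^\beta_s|^2]$ are bounded). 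The right side is $\ge\tfrac c2\e^{-C_2\delta}\E[\ell^b_\rho(Y)]$, and $\E[\ell^b_\delta(Y)]\ge c_3\sqrt\delta$ for $\delta$ small by a standard local-time lower estimate ($Y$ starts on $b$, $\sigma>0$ near $\beta$, $b$ continuous), while $\E[\ell^b_\delta(Y)\ind_{\{\tau_a<\delta\}}]=o(\sqrt\delta)$ by Cauchy--Schwarz since $\P(\tau_a<\delta)\to0$. Hence $C_1\delta\ge c_4\sqrt\delta$ for all small $\delta>0$, impossible; therefore $\psi(t_0)=0$.

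\emph{Main obstacle.} The delicate points are: (i) justifying It\^o--Tanaka's formula with local time along the moving, a priori merely continuous and nondecreasing curve $b$ for the a priori non-$C^1$ function $v_x$ --- this is precisely what Lemma \ref{lem:vxxxbnd} is designed for, its bounds furnishing the difference-of-convex-in-$x$ structure that legitimises the formula and identifies the local-time term as carrying the jump $\psi$; and (ii) the local-time lower bound $\E[\ell^b_\delta(Y)]\gtrsim\sqrt\delta$, which needs control of the time increments of $b$ in the Tanaka decomposition of $\ell^b(Y)$ and comparison of the diffusion part with a Brownian local time.
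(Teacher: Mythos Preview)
Your overall strategy---It\^o--Tanaka producing a local-time term that carries the putative jump $\psi$, followed by a $\sqrt\delta$-vs-$\delta$ scaling contradiction---is exactly that of the paper, and the displayed inequality in Step~3 is correct. The substantive difference is where the local time lives.

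You apply the change-of-variables to $s\mapsto\e^{-\int_0^s\lambda}v_x(t_0+s,Y_s)$ with the time argument running, so the local time is accumulated on the \emph{moving} curve $s\mapsto b(t_0+s)$. The paper instead first uses $v_x(t_0+s,\cdot)\le v_x(t_0,\cdot)$ (Proposition~\ref{prop:monotonicity}(iii)) inside the submartingale inequality to \emph{freeze time at $t_0$}, and only then applies It\^o--Tanaka to the purely spatial map $x\mapsto v_x(t_0,x)$. This yields local time at the \emph{fixed level} $\beta=b(t_0)$, for which the lower bound $\E[L^{\beta}_{\eps\wedge\rho}]\gtrsim\eps^{(1+p)/2}$, $p\in(0,1)$, follows directly from Meyer--Tanaka and Burkholder--Davis--Gundy. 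As by-products the paper needs neither a Peskir-type formula on curves (your obstacle~(i)) nor your lower-semicontinuity argument in Step~1, since one works at the single time $t_0$ throughout.

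Consequently your obstacle~(ii) is not merely a technicality. With only continuity and monotonicity of $b$---all that is available at this stage---the moving-curve estimate $\E[\ell^b_\delta]\gtrsim\sqrt\delta$ can fail: if $b(t_0+s)-\beta\gg\sqrt s$ as $s\downarrow0$, the semimartingale $Y-b(t_0+\cdot)$ is pushed strongly negative and its local time at zero is $o(\sqrt\delta)$. A rescue is possible in principle: under the contradiction hypothesis $\psi\ge c$ on $[t_0,t_0+\delta_0]$ one has $b(t_0+s)-\beta\le\tfrac2c\big(v_x(t_0,\beta)-v_x(t_0+s,\beta)\big)$, so a Lipschitz-in-$t$ estimate for $v_x$ (as in Proposition~\ref{prop:v_x-Lip-time}, whose proof is in fact logically independent of the present proposition) would make $b$ locally Lipschitz and restore the local-time bound. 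But you have not supplied such an argument, and the paper's time-freezing trick renders it unnecessary.
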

\begin{proof}
Fix $t\in[0,T)$, the function $v_{xx}$ is continuous inside $\cC\cap\cI$ by classical interior regularity results for parabolic PDEs \cite[Th.\ 3.10]{friedman2008partial}. By convexity of $v$ we have
\begin{align*}
\lim_{\eps\to0}\frac{v_x(t,b(t))-v_x(t,b(t)-\eps)}{\eps}\geq 0.
\end{align*}
We argue by contradiction to show that the inequality above is an equality. Assume
\begin{align*}
v_{xx}(t,b(t)-)\coloneqq\lim_{\eps\to0}\frac{v_x(t,x)-v_x(t,x-\eps)}{\eps}>0.
\end{align*}
Fix $\eta\in(0,1)$ and introduce the stopping time 
\begin{align*}
\rho=\rho(\eta;t,b(t))=\inf\{u\geq 0:Y^{b(t)}_{u}\notin[b(t)-\eta,b(t)+\eta]\}
\wedge (T-t).
\end{align*}
Moreover, fix an $\bar\eps\in (0,T-t)$ such that $\P(\rho>\bar\eps)>0$ and let $\eps\in(0,\bar\eps)$ such that $b(t)-\eta>a(t+\eps)$. The latter can be obtained by continuity of $a$ and the fact that $b(t)>a(t)$ (cf.\ \eqref{eq:a<b}). It follows that $(s,Y_s^{b(t)})$ lies in $[0,\eps]\!\times\![b(t)\!-\!\eta,b(t)\!+\!\eta]\!\subseteq \![0,T\!-\!t]\!\times\! [b(t)\!-\!1,b(t)\!+\!1]\!\eqqcolon\! \cK $ for $s\in[0,\rho\wedge\eps]$. Applying Dynkin's formula on the random interval $[0,\rho\wedge\eps]$ and using \eqref{eq:ineqPDE_vx} yield
\begin{align*}
v_x(t,b(t))\leq&\,\E\Big[\e^{-\int_0^{\eps\wedge\rho}\lambda(Y_s^{b(t)})\,\ud s}v_x(t+\eps\wedge\rho,Y^{b(t)}_{\eps\wedge\rho})+\int_0^{\eps\wedge\rho}\!\e^{-\int_0^{s}\lambda(Y_u^{b(t)})\,\ud u}h_x(t+s,Y^{b(t)}_{s})\,\ud s\Big]\\
\le&\,\E\Big[\e^{-\int_0^{\eps\wedge\rho}\lambda(Y_s^{b(t)})\,\ud s}v_x(t,Y^{b(t)}_{\eps\wedge\rho})+(\eps\wedge\rho)\hat{C}_1\Big]\\
\le&\,\E\Big[\e^{-\int_0^{\eps\wedge\rho}\lambda(Y_s^{b(t)})\,\ud s}v_x(t,Y^{b(t)}_{\eps\wedge\rho})\Big]+\eps \,\hat{C}_1
\end{align*}
where the second inequality is justified by using that $v_x$ is non-increasing in time (recall Proposition \ref{prop:monotonicity}) and the use of $\hat{C}_1\coloneqq \sup_{(s,y)\in\cK}\exp(s \mu_x(y)-rs)h_x(t+s,y)$, where we recall the definition of $\lambda$ in \eqref{eq:lambda}.

Let $(L_s^{b(t)}(Y^{b(t)}))_{s\in[0,T-t]}$ be the semi-martingale local time at $b(t)$ of the process $Y^{b(t)}$ (see \cite[Ch.\ IV]{protter2005stochastic}). From an application of It\^o-Tanaka's formula to $\exp(-\int_0^{\eps\wedge\rho}\lambda(Y_s^{b(t)})\,\ud s)v_x(t,Y^{b(t)}_{\eps\wedge\rho})$ (see, e.g., \cite[Sec.\ 3.5]{peskir2006optimal} justified because $v_{xxx}$ is locally bounded on the interval $[b(t)-\eta,b(t)+\eta]\setminus\{b(t)\}$ by Lemma \ref{lem:vxxxbnd}), we have
\begin{align*}
v_x(t,b(t))\leq \E\Big[&\,v_x(t,b(t))+\frac{1}{2}\int_0^{\eps\wedge\rho}\!\e^{-\int_0^{s}\lambda(Y_u^{b(t)})\,\ud u}\big(v_{xx}(t,b(t)+)-v_{xx}(t,b(t)-)\big)\,\ud L_s^{b(t)}(Y^{b(t)})\\
&+\int_0^{\eps\wedge\rho}\!\e^{-\int_0^{s}\lambda(Y_u^{b(t)})\,\ud u}\Big((\sigma\sigma_x+\mu)v_{xx}-(r-\mu_x)v_x\Big)(t,Y^{b(t)}_{s})\,\ud s\\
&+\frac{1}{2}\int_0^{\eps\wedge\rho}\!\mathds{1}_{\{Y_s^{b(t)}\neq\, b(t)\}}\e^{-\int_0^{s}\lambda(Y_u^{b(t)})\,\ud u}\sigma^2(Y^{b(t)}_{s})v_{xxx}(t,Y^{b(t)}_{s})\,\ud s\Big]+\eps\, \hat{C}_1.
\end{align*}
Clearly $v_{xx}(t,x)=0$ for $x>b(t)$ because $v_x\equiv \alpha_0$ in $\cM$ and it follows $v_{xx}(t,b(t)+)=0$. Therefore,
\begin{align*}
0&\leq -\frac{1}{2}v_{xx}(t,b(t)-)\E\Big[\,\int_0^{\eps\wedge\rho}\!\e^{-\int_0^{s}\lambda(Y_u^{b(t)})\,\ud u}\,\ud L_s^{b(t)}(Y^{b(t)})\Big]\\
&\hspace{12pt}+\E\Big[\int_0^{\eps\wedge\rho}\!\e^{-\int_0^{s}\lambda(Y_u^{b(t)})\,\ud u}\Big((\sigma\sigma_x+\mu)v_{xx}-(r-\mu_x)v_x\Big)(t,Y^{b(t)}_{s})\,\ud s\Big]\\
&\hspace{12pt}+\frac{1}{2}\E\Big[\int_0^{\eps\wedge\rho}\!\mathds{1}_{\{Y_s^{b(t)}\neq\, b(t)\}}\e^{-\int_0^{s}\lambda(Y_u^{b(t)})\,\ud u}\sigma^2(Y^{b(t)}_{s})v_{xxx}(t,Y^{b(t)}_{s})\,\ud s\Big]+\eps\, \hat{C}_1.
\end{align*}

Recalling that $(Y_s^{b(t)})_{s\in[0,\rho\wedge\eps]}\subseteq [b(t)-1,b(t)+1]$ on the random interval $[0,\rho\wedge\eps]$, and $\lambda$, $\sigma$, $\sigma_x$, $\mu$, $\mu_x$, $v_x$, $v_{xx}$, $v_{xxx}$ are locally bounded, there exist constants $\hat{C}_2>0$ and $\hat{C}_3>0$ independent of $\rho$ such that
\begin{align*}
0\leq -v_{xx}(t,b(t)-)\hat{C}_2\E\big[L_{\eps\wedge\rho}^{b(t)}(Y^{b(t)})\big]+\hat{C}_3 \E\big[\eps\wedge\rho\big].
\end{align*}
Rearranging the above terms, we obtain
\begin{align}\label{eq:localtimebnd}
v_{xx}(t,b(t)-)\hat{C}_2\E\big[L_{\eps\wedge\rho}^{b(t)}(Y^{b(t)})\big]\leq \hat{C}_3 \E\big[\eps\wedge\rho\big]\le \eps\,\hat{C}_3.
\end{align}
Now the idea is to show a contradiction by finding a positive lower bound for the left-hand side. By Meyer-Tanaka's formula \cite[Cor.\ 3 of Th.\ IV.70]{protter2005stochastic} and taking expectation (notice that the stochastic integral therein is a martingale) we have that
\begin{align}\label{eq:localtimeY}
\begin{aligned}
\E\big[L_{\eps\wedge\rho}^{b(t)}(Y^{b(t)})\big]&=\E\Big[|Y^{b(t)}_{\eps\wedge\rho}-b(t)|-|Y_0^{b(t)}-b(t)|-\int_0^{\eps\wedge \rho}\!\sign(Y_s^{b(t)})\,\ud Y_s^{b(t)}\Big]\\
&=\E\Big[|Y^{b(t)}_{\eps\wedge\rho}-b(t)|-\int_0^{\eps\wedge \rho}\!\sign(Y_s^{b(t)})\big(\sigma(Y_s^{b(t)})\sigma_x(Y_s^{b(t)})+\mu(Y_s^{b(t)})\big)\,\ud s\Big].
\end{aligned}
\end{align}
Letting $p\in(0,1)$ and $\eta>0$ as above, considering the first term on the right-hand side above we have
\begin{align*}
\E\big[|Y^{b(t)}_{\eps\wedge\rho}-b(t)|\big]\ge \frac{1}{(2\eta)^p}\E\big[|Y^{b(t)}_{\eps\wedge\rho}-b(t)|^{1+p}\big],
\end{align*}
because $Y_{\eps\wedge\rho}\in[b(t)-\eta,b(t)+\eta]$. Using \eqref{eq:SDE_Y} and the inequality $|x-y|^{1+p}\ge \frac{1}{2^p}|x|^{1+p}-|y|^{1+p}$ (which holds by a combination of the triangle inequality and Jensen's inequality), we obtain
\begin{align}\label{eq:localtimeY1}
\begin{aligned}
&\frac{1}{(2\eta)^p} \E\big[|Y^{b(t)}_{\eps\wedge\rho}-b(t)|^{1+p}\big]\\
&\ge\frac{1}{(4\eta)^p} \E\Big[\Big|\int_0^{\eps\wedge\rho}\sigma(Y^{b(t)}_s)\ud W_s\Big|^{1+p}\Big]-\frac{1}{(2\eta)^p}\E\Big[\Big|\int_0^{\eps\wedge\rho}(\mu+\sigma\sigma_x)(Y^{b(t)}_s)\ud s\Big|^{1+p}\Big]\\
&\ge\frac{1}{(2\eta)^p} \E\Big[\Big|\int_0^{\eps\wedge\rho}\sigma(Y^{b(t)}_s)\ud W_s\Big|^{1+p}\Big]-\frac{C_\eta^{p+1}}{(2\eta)^p}\E\Big[(\eps\wedge\rho)^{1+p}\Big],
\end{aligned}
\end{align}
where the second inequality uses the continuity of $\mu,\sigma,\sigma_x$ to find constant $C_\eta$. The first term on the right-hand side of \eqref{eq:localtimeY1} can be bound from below by Doob's maximal inequality (see \cite[Th.\ 1.3.8(iv)]{karatzas1998brownian} and Burkholder-Davis-Gundy inequality (see \cite[Ch.\ 14.18]{williams1991probability}) so that
\begin{align}\label{eq:localtimeY2}
\begin{aligned}
\E\Big[\Big|\int_0^{\eps\wedge\rho}\sigma(Y^{b(t)}_s)\ud W_s\Big|^{1+p}\Big]&\ge \Big(\frac{p}{1+p}\Big)^{1+p}\E\Big[\sup_{0\le t\le \eps\wedge\rho}\Big|\int_0^{t}\sigma(Y^{b(t)}_s)\ud W_s\Big|^{1+p}\Big]\\
&\ge C_p\Big(\frac{p}{1+p}\Big)^{1+p}\E\Big[\Big|\int_0^{\eps\wedge\rho}\sigma^2(Y^{b(t)}_s)\ud s\Big|^{\frac{1+p}{2}}\Big]\\
&\ge C_{\eta,p}\E\big[(\eps\wedge\rho)^{\frac{1+p}{2}}\big]
\end{aligned}
\end{align}
where $C_p>0$ is the constant appearing in the Burkholder-Davis-Gundy inequality and $C_{\eta,p}>0$ is a constant independent of $\eps$ (notice that $C_p$ depends on the minimum of the function $\sigma(y)$ in $[b(t)-1,b(t)+1]$).

Plugging \eqref{eq:localtimeY1} and \eqref{eq:localtimeY2} into \eqref{eq:localtimeY} yields
\begin{align*}
\E\big[L_{\eps\wedge\rho}^{b(t)}(Y^{b(t)})\big]&\ge \frac{C_{\eta,p}}{(2\eta)^p} \E\big[(\eps\wedge\rho)^{\frac{1+p}{2}}\big]-C_\eta\E\big[\eps\wedge\rho\big]-\frac{C_\eta^{p+1}}{(2\eta)^p}\E\big[(\eps\wedge\rho)^{1+p}\big].
\end{align*}
Assuming (without loss of generality) $\eps<1$, we can collect the last two terms above as follows
\begin{align*}
\E\big[L_{\eps\wedge\rho}^{b(t)}(Y^{b(t)})\big]&\ge \frac{C_{\eta,p}}{(2\eta)^p} \E\big[(\eps\wedge\rho)^{\frac{1+p}{2}}\big]-\Big(C_\eta+\frac{C_\eta^{p+1}}{(2\eta)^p}\Big)\eps.
\end{align*}
Thus, combining the inequality above with \eqref{eq:localtimebnd} and collecting terms of the same order, we have
\begin{align*}
v_{xx}(t,b(t)-)\hat{C}_2\frac{C_{\eta,p}}{(2\eta)^p}\E\big[(\eps\wedge\rho)^{\frac{1+p}{2}}\big]\leq \Big[\hat{C}_3+v_{xx}(t,b(t)-)\hat{C}_2\Big(C_\eta+\frac{C_\eta^{p+1}}{(2\eta)^p}\Big)\Big]\eps.
\end{align*}
Denoting $\tilde{C}_{\eta,p}\coloneqq \hat{C}_2\frac{C_{\eta,p}}{(2\eta)^p} $ and $\hat{C}_{\eta,p}\coloneqq\big[\hat{C}_3+v_{xx}(t,b(t)-)\hat{C}_2\big(C_\eta+\frac{C_\eta^{p+1}}{(2\eta)^p}\big)\big]$, and using that $v_{xx}(t,b(t)-)$ is bounded by Lemma \ref{lem:vxxxbnd}, yield
\begin{align*}
v_{xx}(t,b(t)-)\tilde{C}_{\eta,p}\E\big[(\eps\wedge\rho)^{\frac{1+p}{2}}\big]\le\hat{C}_{\eta,p} \eps.
\end{align*}
Moreover, we have $\E\big[(\eps\wedge\rho)^{\frac{1+p}{2}}\big]\ge \eps^{\frac{1+p}{2}}\P(\rho>\eps)\ge \eps^{\frac{1+p}{2}}\P(\rho>\bar\eps)$, recalling that $\eps<\bar\eps$. Therefore, we obtain
\begin{align*}
v_{xx}(t,b(t)-)\tilde{C}_{\eta,p}\eps^{\frac{1+p}{2}}\P(\rho>\bar\eps)\leq \hat{C}_{\eta,p} \eps,
\end{align*}
which implies
\begin{align*}
0<v_{xx}(t,b(t)-)\tilde{C}_{\eta,p}\P(\rho>\bar\eps)\le \hat{C}_{\eta,p} \eps^{\frac{1-p}{2}},
\end{align*}
where the first inequality follows by recalling that $\P(\rho>\bar\eps)>0$. Sending $\eps\to0$ and recalling that $p\in(0,1)$, we reach a contradiction because the right-hand side above goes to $0$. Thus, it holds $v_{xx}(t,b(t)-)=0$.
\end{proof}

To obtain our desired results, we need to introduce two stopping times which are slightly different from $\sigma_*$ and prove some properties that they satisfy. That is, for fixed $(t,y)\in[0,T]\times\cO$ and recalling $Y^y$ from \eqref{eq:SDE_Y}, let us define the stopping times $\hat{\sigma}=\hat{\sigma}(t,y)$ and $\check{\sigma}=\check{\sigma}(t,y)$ as
\begin{align*}
\begin{aligned}
\hat{\sigma}=\inf\{s>0:Y_s^y\geq b(t+s)\}\wedge(T-t) \quad \text{and} \quad 
\check{\sigma}=\inf\{s>0:Y_s^y> b(t+s)\}\wedge(T-t).
\end{aligned}
\end{align*}
The next result shows that $\hat{\sigma}$ and $\sigma_*$ from \eqref{eq:sigma*} are equal, $\P$-a.s. We will show later that $\check{\sigma}$ is equal to the other two $\P$-a.s.

\begin{lemma}\label{lem:sigma*hat}
For all $(t,y)\in[0,T]\times \cO$, we have $\sigma_*(t,y)=\hat{\sigma}(t,y)$, $\P$-a.s. Moreover, for $(t,y)\in\cM$, it holds $\P(\hat{\sigma}(t,y)=0)=1$.
\end{lemma}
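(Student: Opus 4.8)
The plan is to establish the two assertions separately, beginning with the equality $\sigma_*(t,y)=\hat\sigma(t,y)$, $\P$-a.s., for an arbitrary fixed $(t,y)\in[0,T]\times\cO$. Recall that $\sigma_*$ was defined in \eqref{eq:sigma*} as $\inf\{s\ge 0:Y_s^y\ge b(t+s)\}\wedge(T-t)$ (with the infimum over $s\ge 0$), whereas $\hat\sigma$ uses the strict inequality $s>0$ in the infimum; hence trivially $\sigma_*\le\hat\sigma$, $\P$-a.s., and the only content is the reverse inequality. The two stopping times can differ only on the event where $Y_0^y=y\ge b(t)$, i.e.\ when $(t,y)\in\cM$ (the action region, where $v_x(t,y)=\alpha_0$). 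On the complement $(t,y)\in\cI$ (so $y<b(t)$), the continuity of $s\mapsto Y_s^y$ gives $Y_s^y<b(t+s)$ for all $s$ in a (random) right-neighbourhood of $0$, so $\sigma_*>0$ and the condition $s>0$ in $\hat\sigma$ is immaterial; thus $\sigma_*=\hat\sigma$, $\P$-a.s., on this event. Therefore both claims of the lemma reduce to the single statement: \emph{if $(t,y)\in\cM$, then $\hat\sigma(t,y)=0$, $\P$-a.s.} (and then $\sigma_*=0=\hat\sigma$ there as well, since $\sigma_*\le\hat\sigma$).

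So the core of the proof is to show $\P(\hat\sigma(t,y)=0)=1$ for $(t,y)\in\cM$, i.e.\ that the diffusion $Y^y$ started at a point $y\ge b(t)$ \emph{immediately} re-enters the region $\{x\ge b(\cdot)\}$ (it is already there at time $0$, but with the strict $s>0$ we must see it for arbitrarily small positive times). This is where the smooth-fit condition of Proposition~\ref{prop:limvxxb-} enters. The argument mirrors the one used for $\tau_*$ and $\tilde\tau_*$ in Lemma~\ref{lem:tau°}, Case~2: using the monotonicity of $b$ from Proposition~\ref{prop:monotone}(ii) together with a 0-1 law for the time spent by the diffusion on one side of its starting point. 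Concretely, since $b$ is non-decreasing, $\{Y_s^y< b(t+s)\text{ for all }s\in[0,\eps]\}\subseteq\{Y_s^y< b(t)\text{ for all }s\in[0,\eps]\}$ (using $y\ge b(t)$); but by a regularity-of-the-boundary result for one-dimensional diffusions (e.g.\ \cite[Lem.~V.46.1]{rogers2000diffusions}, exactly as invoked in the proof of Lemma~\ref{lem:tau°}), the event that $Y^y$ stays strictly below its own starting level $b(t)$ on a full right-neighbourhood of $0$ has probability $0$. Hence $Y^y$ visits $\{x\ge b(t)\}\subseteq\{x\ge b(t+s)\}$ at arbitrarily small positive times, which is precisely $\hat\sigma(t,y)=0$, $\P$-a.s.

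The step I expect to require the most care is the passage from ``$Y^y$ does not stay strictly below $b(t)$'' to ``$\hat\sigma=0$'': one must be careful that $b$ is only known to be continuous and non-decreasing (Proposition~\ref{prop:monotone}(ii)), not differentiable, so one cannot compare $b(t+s)$ with $b(t)$ via a derivative bound — only the monotonicity $b(t+s)\ge b(t)$ is available, which is exactly what makes the inclusion $\{Y_s^y<b(t+s)\}\subseteq\{Y_s^y<b(t)\}$ go through for $y\ge b(t)$. A second subtlety is purely definitional: $(t,y)\in\cM$ means $v_x(t,y)=\alpha_0$, which by \eqref{eq:C-I-boundaries} (and the convention $\sup\varnothing=\inf\cO$) translates to $y\ge b(t)$; one should double-check the boundary case $y=b(t)$, where the smooth-fit property $v_{xx}(t,b(t)-)=0$ from Proposition~\ref{prop:limvxxb-} guarantees that the ``strict'' boundary $b$ and the ``non-strict'' description of $\cM$ agree at $b(t)$, so no pathology arises from the single point $\{b(t)\}$. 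Once $\hat\sigma(t,y)=0$, $\P$-a.s.\ on $\cM$ is in hand, combining it with the inequality $\sigma_*\le\hat\sigma$, $\P$-a.s., and the already-noted equality on $\cI$ completes the proof on all of $[0,T]\times\cO$.
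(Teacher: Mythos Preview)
Your reduction to the single statement ``$\hat\sigma(t,y)=0$, $\P$-a.s., for $(t,y)\in\cM$'' is correct, and your treatment of the cases $(t,y)\in\cI$ and $(t,y)\in\inter(\cM)$ is fine. However, the core step --- the inclusion
\[
\{Y_s^y< b(t+s)\text{ for all }s\in(0,\eps]\}\subseteq\{Y_s^y< b(t)\text{ for all }s\in(0,\eps]\}
\]
--- is backwards. Since $b$ is non-decreasing (Proposition~\ref{prop:monotone}(ii)), one has $b(t+s)\ge b(t)$, so $Y_s^y<b(t+s)$ does \emph{not} imply $Y_s^y<b(t)$; the inclusion holds in the opposite direction. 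The analogy with Lemma~\ref{lem:tau°}, Case~2, breaks down precisely here: there the process must stay \emph{above} the non-decreasing boundary $a$, so $X_s\ge a(t+s)\ge a(t)$ gives the inclusion in the right direction. For the upper, non-decreasing boundary $b$, staying below $b(t+\cdot)$ is \emph{easier} than staying below the constant level $b(t)$, so the law-of-iterated-logarithm argument gives no information.

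This is not a minor slip: it is exactly the obstruction that forces the paper to take a completely different route at the boundary point $y_0=b(t_0)$. The paper invokes Blumenthal's 0--1 law to reduce to $\P(\hat\sigma>0)\in\{0,1\}$, assumes by contradiction that $\P(\hat\sigma>0)=1$, and then uses the probabilistic representation \eqref{eq:optstop} of $v_x$ (with suboptimal/optimal stopping times and Fatou's lemma) to produce a strictly positive lower bound for $v_{xx}(t_0,b(t_0)-)$. This contradicts the smooth-fit result of Proposition~\ref{prop:limvxxb-}. In other words, Proposition~\ref{prop:limvxxb-} is not a cosmetic remark about the boundary case $y=b(t)$, as your plan suggests --- it is the engine of the proof. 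Your current argument does not use it in any essential way, and without it (or a substitute) the claim at $\partial\cI$ cannot be established.
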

\begin{proof}
Recalling that $[0,T]\times \cO$ can be split in two disjoint regions $\cM$ and $\cI$, we divide the proof by considering three cases: $(t_0,y_0)\in\inter(\cM)$, $(t_0,y_0)\in\cI$ and $(t_0,y_0)\in\partial\cI$. 

{\bf Case 1:} $(t_0,y_0)\in\inter(\cM)$. We have that $\sigma_*(t_0,y_0)=0$, $\P$-a.s and using that $Y^{y_0}$ and $b$ are continuous functions (the process for almost every $\omega)$, we have that also $\hat{\sigma}(t_0,y_0)=0$. Thus, the result holds.

{\bf Case 2:} $(t_0,y_0)\in\cI$. If $t_0=T$, then $\sigma_*=\hat{\sigma}=0$ by definition. So let $t_0<T$. Since the process is continuous almost every $\omega$ and it is starting from an internal point $y_0<b(t_0)$, we have that 
\begin{align}\label{eq:sigmahat}
\sigma_*(t_0,y_0)=\inf\{s\ge 0:Y_s^{y_0}\geq b(t_0+s)\}=\inf\{s>0:Y_s^{y_0}\geq b(t_0+s)\}=\hat{\sigma}(t_0,y_0).
\end{align}

{\bf Case 3:} $(t_0,y_0)\in\partial\cI$. The process $Y$ is a continuous Feller process, so by the Blumenthal's zero-one law (cf. \cite[Ch. III.9]{rogers2000diffusions1}) we have either $\P(\hat{\sigma}>0)=1$ or $\P(\hat{\sigma}>0)=0$. Assume by contradiction that $\P(\hat{\sigma}>0)=1$. Let $y<y_0$ and fix $\eps>0$ such that $y+\eps<y_0$. Let us define $\sigma^\eps\coloneqq \hat{\sigma}(t,y+\eps)$ and notice that by {\bf Case 2} it is such that $\sigma^\eps=\sigma_*(t,y+\eps)$, $\P$-a.s. Recalling that $v_x$ admits the probabilistic representation \eqref{eq:optstop} and denote $\tau_a$ by $\tau_a^y$ to keep track on its dependence on the starting point $y$. Since $\sigma^\eps$ is sub-optimal for $v_x(t,y)$ and optimal for $v_x(t,y+\eps)$, then we have
\begin{align}\label{eq:v_xlowerbnd}
\begin{aligned}
&v_x(t,y\!+\!\eps)\!-\!v_x(t,y)\\
&\ge \E\Big[\mathds{1}_{\{\sigma^\eps< (T-t)\wedge\tau_a^{y+\eps}\}}\e^{-\int_0^{\sigma^\eps}\lambda(Y_u^{y+\eps})\ud u}\alpha_{0}\!+\!\int_{0}^{\sigma^\eps\wedge\tau_a^{y+\eps}}\!\!\!\!\e^{-\int_0^s\lambda(Y_u^{y+\eps})\ud u}h_x(t\!+\!s,Y_s^{y+\eps})\ud s\\
&\qquad-\mathds{1}_{\{\sigma^\eps< (T-t)\wedge\tau_a^y\}}\e^{-\int_0^{\sigma^\eps}\lambda(Y_u^y)\ud u}\alpha_{0}\!-\!\int_{0}^{\sigma^\eps\wedge\tau_a^y}\!\!\!\!\e^{-\int_0^s\lambda(Y_u^y)\ud u}h_x(t\!+\!s,Y_s^y)\,\ud s\Big].
\end{aligned}
\end{align}
Notice that by standard comparison principles we have $Y_s^{y+\eps}\geq Y_s^{y}$, $\P$-a.s for all $s\geq0$. Therefore, we have $\tau_a^{y+\eps}\geq \tau_a^{y}$, $\P$-a.s., $\mathds{1}_{\{\sigma^\eps< (T-t)\wedge\tau_a^{y+\eps}\}}\ge \mathds{1}_{\{\sigma^\eps< (T-t)\wedge\tau_a^{y}\}}$ and, using also $x\mapsto\lambda (x)$ is non-increasing (recall \eqref{eq:lambda} and Assumption \ref{ass:1}),
\begin{align*}
\e^{-\int_0^{\sigma^\eps}\lambda(Y_u^{y+\eps})\ud u}\ge \e^{-\int_0^{\sigma^\eps}\lambda(Y_u^{y})\ud u}. 
\end{align*}
Plugging it with the inequality above into \eqref{eq:v_xlowerbnd}, yields
\begin{align*}
v_x(t,y+\eps)-v_x(t,y)\ge &\,\E\Big[\int_{0}^{\sigma^\eps\wedge\tau_a^{y}}\!\e^{-\int_0^s\lambda(Y_u^{y})\ud u}\Big(h_x(t+s,Y_s^{y+\eps})-h_x(t+s,Y_s^y)\Big)\,\ud s\Big]
\end{align*}
where we also used that $h_x$ is non-negative. Dividing by $\eps$ and sending $\eps\to0$, we obtain
\begin{align*}
v_{xx}(t,y)\ge &\,\E\Big[\liminf_{\eps\to0}\frac{1}{\eps}\int_{0}^{\sigma^\eps\wedge\tau_a^{y}}\!\e^{-\int_0^s\lambda(Y_u^{y})\ud u}\Big(h_x(t+s,Y_s^{y+\eps})-h_x(t+s,Y_s^y)\Big)\,\ud s\Big]
\end{align*}
where the limit and expectation are exchange by Fatou's lemma and the fact $h_x(t\!+\!s,Y_s^{y+\eps})\!\geq\! h_x(t\!+\!s,Y_s^y)$ by convexity of $h$. The sequence $(\sigma^\eps)_{\eps\in(0,1)}$ is non-decreasing as $\eps\downarrow0$, $\P$-a.s. Thus it admits a limit and we denote it by $\sigma^{+}\coloneqq\lim_{\eps\downarrow0}\sigma^\eps$. We claim and we prove later at the end of the proof that $\sigma^{+}=\sigma^0$. Therefore,
\begin{align*}
v_{xx}(t,y)\ge &\,\E\Big[\int_{0}^{\sigma^0\wedge\tau_a^{y}}\!\e^{-\int_0^s\lambda(Y_u^{y})\ud u}\Big(h_{xx}(t+s,Y_s^{y})\partial_xY_s^y)\Big)\,\ud s\Big]
\end{align*}
where $\partial_x Y_s^y$ denotes the solution of the SDE associated to the derivative with respect to the initial position of the process $Y^y$. Using that $h_{xx}$ is positive on $\cO$ and that the process $\partial_x Y_s^y$ is strictly positive, we can introduce a localising stopping time $\rho^y=\rho^y(\delta)$ for fixed $\delta>0$ such that
\begin{align*}
\rho^y=\inf\{s\geq 0|Y^{y}_s\notin[y-\delta,y_0+\delta]\}\wedge\inf\{s\geq 0|\partial_xY^{y}\leq \delta\}\wedge(T-t).
\end{align*}
It follows that 
\begin{align*}
v_{xx}(t,y)\ge &\,\E\Big[\int_{0}^{\sigma^0\wedge\tau_a^{y}\wedge\rho^y}\!\e^{-\int_0^s\lambda(Y_u^{y})\ud u}\Big(h_{xx}(t+s,Y_s^{y})\partial_xY_s^y)\Big)\,\ud s\Big]\geq C_1\E[\sigma^0\wedge\tau_a^{y}\wedge\rho^y]
\end{align*}
where $C_1>0$ is the minimum of the function $\e^{-\int_0^s\lambda(Y_u^{y})\ud u}h_{xx}(t+s,Y_s^{y})\partial_x Y_s^{y}$ over the interval $[0,\sigma^+\wedge\tau_a^{y}\wedge\rho^y]$.

Sending $y\uparrow y_0$ we have that $\rho^y\to\rho^{y_0}$, $\tau_a^y\to\tau_a^{y_0}$ and $\lim_{y\uparrow y_0}\sigma^0(t,y)\geq \hat{\sigma}(t_0,y_0)$ because $y\mapsto \hat{\sigma}(t,y)$ is non-increasing (notice we wrote explicitly the dependence of $\sigma^0=\hat\sigma(t,y)$ on the pair $(t,y)$). It follows by dominated convergence that 
\begin{align*}
v_{xx}(t,y_0)\ge C_1\E[\hat{\sigma}\wedge\tau_a^{y_0}\wedge\rho^{y_0}].
\end{align*}
By assumption $\P(\hat{\sigma}>0)=1$, by continuity of the process both $\tau_a^{y_0}$ and $\rho^{y_0}$ are strictly positive $\P$-a.s, it turns out that we have a contradiction because we obtain $v_{xx}(t,y_0)>0$, but $v_{xx}(t,y_0)=0$ by Proposition \ref{prop:limvxxb-}.

{\bf Proof of $\sigma^{+}=\sigma^0$.} First notice that $\sigma^\eps\leq \sigma^0$, $\P$-a.s. for all $\eps>0$ and so $\sigma^+\leq \sigma^0$, $\P$-a.s. Let $\omega\in\Omega$ such that $\sigma^+(\omega)\leq \sigma^0(\omega)$. If $\sigma^0(\omega)=0$, then $\sigma^+(\omega)=0$. So, assume that $\sigma^0(\omega)>0$ and pick any $\delta>0$ such that $\sigma^0(\omega)>\delta$. Then, there exists $c_\delta$ such that
\begin{align*}
\inf_{0\leq s\leq \delta}\big[b(t+s)-Y^{y}_s(\omega)\big]\ge c_\delta(\omega)>0.
\end{align*}
Since $\lim_{\eps\to0}\sup_{0\leq s\leq \delta}|Y^{y+\eps}_s(\omega)- Y^y_s(\omega)|=0$ converges uniformly (up to selecting a subsequence), for all sufficiently small $\eps$, we have that $Y^{y+\eps}_s\leq Y^{y}_s+ \tfrac{c_\delta(\omega)}{2}$ for all $s\in[0,\delta]$. Thus
\begin{align*}
\inf_{0\leq s\leq \delta}\big[b(t+s)-Y^{y+\eps}_s(\omega)\big]\ge \tfrac{c_\delta(\omega)}{2}>0.
\end{align*}
It follows that $\liminf_{\eps\downarrow0}\sigma^\eps(\omega)\geq \delta$. By the arbitrariness of $\delta$ we have $\liminf_{\eps\downarrow0}\sigma^\eps(\omega)\geq \sigma^0(\omega)$. So, $\sigma^+=\sigma^0$ holds for almost every $\omega$.
\end{proof}

In the next lemma we prove that $\sigma_*(t,y)=\check{\sigma}(t,y)$, $\P$-a.s., for every $(t,y)\in[0,T]\times \cO$. Since $b$ is an upper boundary, this result would follow by a standard application of the law of iterated logarithm if $b$ were non-increasing. That is, if $b$ were non-increasing, by the law of iterated logarithm we would have that the first time that $Y$ hits $b$ would coincide the first time that $Y$ crosses $b$. However, $b$ is non-decreasing in our framework and so the proof of this result requires finer technicalities. To this purpose, we mimic techniques that were originally developed in \cite{cox2015embedding} and then adopted in other works (see, e.g., \cite[Lem.\ 10]{deangelis2021assets}).

\begin{lemma}\label{lem:sigma*=check-sigma}
For all $(t,y)\in[0,T]\times \cO$, we have that $\sigma_*(t,y)=\check{\sigma}(t,y)$, $\P$-a.s.
\end{lemma}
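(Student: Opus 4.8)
The inequality $\sigma_*(t,y)\le\check\sigma(t,y)$ holds pathwise, since every strict upcrossing of $b$ is in particular a hitting of $b$, while $\sigma_*=\hat\sigma$ $\P$-a.s.\ by Lemma~\ref{lem:sigma*hat}; hence the whole content is the reverse bound $\check\sigma\le\sigma_*$ $\P$-a.s. The plan is to first reduce this to a statement about starting points lying on the boundary $\partial\cM=\{(t,b(t)):t\in[0,T)\}$, and then to prove that statement by comparing $Y^{y}$ with the first upcrossing times of \emph{constant} levels, following the technique of \cite{cox2015embedding} as adapted in \cite[Lem.\ 10]{deangelis2021assets}.

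For the reduction, note that on $\{\sigma_*(t,y)=T-t\}$ one has $\sigma_*\le\check\sigma\le T-t$, hence equality, so we may work on $\{\hat\sigma(t,y)<T-t\}$. If $(t,y)\in\cI$, then $Y^y$ starts strictly below $b$ and, being continuous, satisfies $Y^y_{\hat\sigma}=b(t+\hat\sigma)$ with no strict upcrossing of $b$ occurring on $[0,\hat\sigma]$; by the strong Markov property of $Y$ (a regular one-dimensional diffusion), $\check\sigma(t,y)=\hat\sigma(t,y)$ $\P$-a.s., provided $\check\sigma$ vanishes $\P$-a.s.\ when started from any boundary point $(s_0,b(s_0))$. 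If $(t,y)\in\inter(\cM)$ then $\check\sigma(t,y)=0=\sigma_*(t,y)$ trivially, and if $(t,y)\in\partial\cM$ we are already at such a point. Since $Y$ is a continuous Feller process, Blumenthal's zero-one law gives $\P\big(\check\sigma(t_0,b(t_0))=0\big)\in\{0,1\}$ for each $t_0\in[0,T)$, so it remains only to exclude $\P\big(\check\sigma(t_0,b(t_0))>0\big)=1$.

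To handle the boundary case, write $y_0:=b(t_0)\in\cO$ and, for $c>0$, set $\theta_c:=\inf\{s>0:Y^{y_0}_s\ge y_0+c\}\wedge(T-t_0)$, the first upcrossing of the constant level $y_0+c$ by $Y^{y_0}$. Since $Y^{y_0}$ is a non-degenerate diffusion started at $y_0$, the law of the iterated logarithm at $0$ gives $\theta_c\downarrow0$ $\P$-a.s.\ as $c\downarrow0$, and on $\{\theta_c<T-t_0\}$ one has $Y^{y_0}_{\theta_c}=y_0+c$. The key observation is that if, along a (possibly random) sequence $c_n\downarrow0$, one has $b(t_0+\theta_{c_n})<y_0+c_n$, then $Y^{y_0}_{\theta_{c_n}}>b(t_0+\theta_{c_n})$, so $\check\sigma(t_0,y_0)\le\theta_{c_n}\to0$ and hence $\check\sigma(t_0,y_0)=0$. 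The existence of such a sequence $\P$-a.s.\ follows by combining (i) the fact that $Y^{y_0}$ reaches the level $y_0+c$ in a time of order $c^2$ (and, along well-chosen subsequences of levels, within an arbitrarily small constant multiple of $c^2$) with (ii) the continuity of $b$ at $t_0$ (Proposition~\ref{prop:monotone}(ii)) and the local regularity of $b$ from \cite[Sec.\ 6]{bovo2024saddle}, via a Borel--Cantelli argument over a deterministic sequence of levels $c_n\downarrow0$. Unravelling the reduction then yields $\check\sigma(t,y)=\sigma_*(t,y)$ $\P$-a.s.\ for all $(t,y)\in[0,T]\times\cO$.

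The main difficulty — and the reason the proof is not the one-line law-of-iterated-logarithm argument — is that $b$ is \emph{non-decreasing}: for a non-increasing boundary one would have $b(t_0+s)\le b(t_0)=y_0$, so $Y^{y_0}$ overshoots $b$ immediately and $\check\sigma=0$ at once, whereas here the boundary runs upward and one must race its growth against the diffusive oscillations of $Y^{y_0}$. Making this race quantitative requires both the precise hitting-time scaling $\theta_c=O(c^2)$ (with the refinement that, along well-separated subsequences, $\theta_{c_n}$ can be made $\le\eps c_n^2$, so that $b(t_0+\theta_{c_n})-y_0<c_n$) and the regularity of $b$ near $t_0$; in the borderline situation where $b$ is only H\"older-$\tfrac12$ with a large constant one has to invoke the finer ``fast-point'' estimates underlying \cite{cox2015embedding}, comparing increments of $Y^{y_0}$ over well-separated time scales to obtain enough independence for the second Borel--Cantelli lemma. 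The remaining ingredients — the strong Markov reduction, Blumenthal's zero-one law, and the continuity of the flow $y\mapsto Y^{y}$ — are routine.
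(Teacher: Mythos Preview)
Your reduction via the strong Markov property and Blumenthal's zero--one law to the question ``does $\check\sigma(t_0,b(t_0))=0$ $\P$-a.s.?'' is correct and natural. The gap is in the step that follows. You assert that the required sequence $c_n\downarrow0$ with $b(t_0+\theta_{c_n})<y_0+c_n$ exists $\P$-a.s., invoking ``the local regularity of $b$ from \cite[Sec.\ 6]{bovo2024saddle}''. But the only regularity of $b$ available there (and recalled in Proposition~\ref{prop:monotone}(ii)) is continuity and monotonicity; there is no H\"older estimate. For a merely continuous non-decreasing boundary one can have, for instance, $b(t_0+s)-b(t_0)\sim 1/\log(1/s)$, which dominates the diffusive scale $\sqrt{s\log\log(1/s)}$ governing $\theta_c$ and the fastest oscillations of $Y^{y_0}$ at a fixed time. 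In that regime your ``race'' argument fails: neither the $\theta_c\le\eps c^2$ refinement nor the fast-point/Borel--Cantelli hand-wave produces times at which $Y^{y_0}$ strictly exceeds $b$. The appeal to \cite{cox2015embedding} does not help here, because the mechanism in that paper is precisely the time-shift argument the present paper uses, not a pathwise comparison of hitting times with the modulus of continuity of the boundary.

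The paper's proof avoids any quantitative regularity of $b$ altogether. It works directly with the distributions of $\hat\sigma^\delta_\eps$ and $\check\sigma^\delta_\eps$: decomposing $(\delta,s)$ into the flat and strictly-increasing pieces of $b$, it uses the law of the iterated logarithm only on the flat pieces (where it is the easy direction), and on the strictly-increasing pieces it exploits the strict inequality $b(u)<b(u+h)$ to replace the event $\{Y_u\le b(u)\}$ by $\{Y_{u-h}<b(u)\}$, then passes $h\to0$ via the Markov property and Scheff\'e's theorem. This yields $\P(\check\sigma_0^\delta>s)\le\P(\hat\sigma_0^\delta>s)$ for all $s$, hence $\check\sigma_0^\delta=\hat\sigma_0^\delta$ $\P$-a.s., and a double limit in $(\eps,\delta)$ gives $\check\sigma=\hat\sigma$. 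The argument uses only continuity of $b$ and continuity of the transition density of $Y$, which is why it succeeds where your approach stalls.
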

\begin{proof}
By Lemma \ref{lem:sigma*hat} it is enough to show that $\hat{\sigma}(t,y)=\check{\sigma}(t,y)$, $\P$-a.s., for any $(t,y)\in[0,T]\times \cO$.
Recall the form of $\cI$ from \eqref{eq:C-I-boundaries} and that $\cM=\cI^c$. Then, it is straightforward to see that $\hat{\sigma}(t,y)=\check{\sigma}(t,y)=0$ if $(t,y)\in\mathring{\cM}\cup(\{T\}\times\cO)$.

So let us fix $(t,y)\in\bar{\cI}\cap ([0,T)\times\cO)$, i.e., $t<T$ and $y\leq b(t)$. For $\eps,\delta>0$, we define
\begin{align*}
	\hat{\sigma}_\eps\coloneqq\inf\{s>0|Y^y_s\geq b(t+s)+\eps \}, \quad &\hat{\sigma}^\delta_\eps\coloneqq\inf\{s>\delta|Y^y_s\geq b(t+s)+\eps \},\\
	\check{\sigma}_\eps\coloneqq\inf\{s>0|Y^y_s> b(t+s)+\eps \}, \quad &\check{\sigma}^\delta_\eps\coloneqq\inf\{s>\delta|Y^y_s> b(t+s)+\eps \},
\end{align*}
so that $\hat{\sigma}=\hat{\sigma}_0$ and $\check{\sigma}=\check\sigma_0$. By continuity of $Y$ and $b(\cdot)$ and monotonicity of $\eps\mapsto \hat{\sigma}^\delta_\eps$, we have that 
\begin{equation}\label{eq:lim_eps-sigma}
	\lim_{\eps\to 0}\hat{\sigma}_\eps=\check\sigma \quad \text{and} \quad \hat{\sigma}_0^\delta\leq \lim_{\eps\to 0}\hat{\sigma}_\eps^\delta=\check{\sigma}^\delta_0, \qquad \P\text{-a.s.}
\end{equation}
Assume that, for any $(t,y)\in\bar{\cI}$, it holds
\begin{equation}\label{eq:sigma>s}
	\P(\check\sigma_0^\delta>s)\leq \P(\hat\sigma_0^\delta>s), \qquad \forall \: s\geq 0.
\end{equation}
Then, by \eqref{eq:lim_eps-sigma} and \eqref{eq:sigma>s}, we obtain $\check\sigma^\delta_0=\hat\sigma^\delta_0$, $\P$-a.s.\ and so
\begin{equation*}
	\check\sigma =\lim_{\eps\to 0}\hat\sigma_\eps=\lim_{\eps\to 0}\lim_{\delta\to 0} \hat\sigma^\delta_\eps=\lim_{\delta\to 0}\lim_{\eps\to 0} \hat\sigma^\delta_\eps=\lim_{\delta\to 0}\check\sigma_0^\delta=\lim_{\delta\to 0}\hat\sigma^\delta_0=\hat\sigma_0=\hat\sigma,
\end{equation*}
where we have used \eqref{eq:lim_eps-sigma} and the exchange of limits is justified by the fact that $\hat\sigma^\delta_\eps$ is non-decreasing in both $\eps$ and $\delta$.

It remains to prove \eqref{eq:sigma>s}. Fix $(t,y)\in\bar{\cI}\cap ([0,T)\times\cO)$ and let $b^t(s)\coloneqq b(t+s)$. Notice that, since $b^t$ is non-decreasing, any interval of the form $(\delta,s)$ may be decomposed into the union of countably many intervals where $b^t$ is either flat or strictly increasing. Let $F$ be the set (countable union of intervals) where $b^t$ is flat. Then, it follows from the law of iterated logarithm that $\hat\sigma=\check\sigma$ on the event $\{\hat\sigma\in F \}$, because $Y$ immediately crosses $b^t$ if it hits it on $F$. As a consequence, we have that
\begin{equation}\label{eq:PonF}
	\P(Y_s\leq b^t(s),\forall\: s\in F )=\P(Y_s< b^t(s),\forall\: s\in F ).
\end{equation}
Fix $0\leq\delta<s<T-t$ and $h_0\in(0,s-\delta)$. Then, since $b^t$ is non-decreasing, we have that $b^t(s)\leq b^t(s+h)$ for every $h\in(0,h_0)$. Moreover, the latter inequality is strict whenever $b^t$ is strictly increasing. By this last argument and \eqref{eq:PonF}, for any $h\in(0,h_0)$, we obtain
\begin{align*}
\P(\check\sigma^\delta_0>s)&=\P(Y_u\leq b^t(u),\forall \: u\in(\delta,s])\leq \P(Y_u< b^t(u+h),\forall \: u\in(\delta,s])\\
&=\P(Y_{u-h}< b^t(u),\forall \: u\in(\delta+h,s+h])\leq \P(Y_{u-h}< b^t(u),\forall \: u\in(\delta+h_0,s]).
\end{align*}
By using the notation $\P_y(\,\cdot\,)=\P(\,\cdot\,|\,Y_0=y)$, the Markov property implies
\begin{align}\label{eq:speedmeas}
\P_y(\check\sigma_0^\delta>s)&\leq \P_y(Y_{u-h}< b^t(u),\forall \: u\in(\delta+h_0,s])\nonumber\\
&=\E_y\big[\P_{Y_{h_0+\delta/2-h}}(Y_{u-h_0-\delta/2}< b^t(u),\forall \: u\in(\delta+h_0,s]) \big]\\
&=\int_{\inf\cO}^{\infty} p_Y(\xi;h_0\!+\!\delta/2\!-\!h,y)\P_\xi\big(Y_{u-h_0-\delta/2}< b^t(u),\forall \: u\in(\delta+h_0,s]\big)\,m_Y(\ud\xi),
\end{align}
where $p_Y$ is a continuous function and $m_Y$ is the speed measure of $Y$ (see, e.g., \cite[Th.\ 50.11]{rogers2000diffusions}). By Scheffe's theorem (see, e.g., \cite[Th.\ 16.12]{billingsley2013convergence}), we have that
\begin{align*}
\lim_{h\to 0}\int_{\inf\cO}^{\infty} \big|\,p_Y(\xi;h_0\!+\!\delta/2\!-\!h,y)-p_Y (\xi;h_0\!+\!\delta/2,y)\big|\,m_Y(\ud\xi)=0,
\end{align*}
and so, letting $h\to 0$ in \eqref{eq:speedmeas}, we obtain
\begin{align*}
\P_y(\check\sigma_0^\delta>s)&\leq\int_{\inf\cO}^{\infty} p_Y(\xi;h_0\!+\!\delta/2,y)\P_\xi\big(Y_{u-h_0-\delta/2}< b^t(u),\forall \: u\in(\delta+h_0,s]\big)\,m_Y(\ud\xi)\\
&=\P_y(Y_{u}< b^t(u),\forall \: u\in(\delta+h_0,s])=\P_y(\hat{\sigma}^{\delta+h_0}_0>s).
\end{align*}
Letting $h_0\to 0$, we reach our desired result \eqref{eq:sigma>s}, because $\hat{\sigma}^{\delta+h_0}_0\downarrow \hat{\sigma}^{\delta}_0$ as $h_0\to 0$.
\end{proof}

In the next proposition, we prove the convergence of stopping times $\sigma_*(t_n,y_n)\to \sigma_*(t,y)$ for any sequence $((t_n,y_n))_{n\in\N}$ such that $(t_n,y_n)\to(t,y)$.

\begin{proposition}\label{prop:sigma_n-to-sigma_*}
Let $(t,y)\in[0,T]\times\cO$, for any sequence $((t_n,y_n))_{n\in\N}\subset[0,T]\times\cO$ such that $(t_n,y_n)\to(t,y)$, then $\sigma_*(t_n,y_n)\to \sigma_*(t,y)$, $\P$-a.s.
\end{proposition}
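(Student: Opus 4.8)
The plan is to follow closely the argument used for Proposition~\ref{prop:tauconv}, replacing the controlled process $X^{\nu^*}$ by the diffusion $Y$ from \eqref{eq:SDE_Y} and the non-decreasing continuous curve $a$ by the non-decreasing continuous curve $b$ (Proposition~\ref{prop:monotone}(ii)), and using that $Y$ crosses $b$ immediately upon hitting it, i.e.\ $\sigma_*=\hat\sigma=\check\sigma$ $\P$-a.s., by Lemmas~\ref{lem:sigma*hat} and~\ref{lem:sigma*=check-sigma}. Fix $\omega$ in a set of full probability on which $u\mapsto Y^y_u(\omega)$ is continuous, the map $(u,z)\mapsto Y^z_u(\omega)$ is jointly continuous (continuous dependence of the SDE~\eqref{eq:SDE_Y} on the initial datum, which holds under Assumption~\ref{ass:1}), and the identities $\sigma_*=\hat\sigma=\check\sigma$ hold simultaneously at $(t,y)$ and at every $(t_n,y_n)$. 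If $t=T$ or $\sigma_*(t,y)(\omega)=T-t$, then $\sigma_*(t_n,y_n)(\omega)\le T-t_n\to T-t=\sigma_*(t,y)(\omega)$ gives the claim, so from now on I assume $t<T$ and $\sigma_*(t,y)(\omega)<T-t$ (in particular $t_n<T$ for $n$ large).

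\textbf{Upper bound.} Here I use the crossing representation $\check\sigma$. Given $s\in(\check\sigma(t,y)(\omega),T-t)$, continuity implies the open set $\{u>0:Y^y_u(\omega)>b(t+u)\}$ meets $(\check\sigma(t,y)(\omega),s)$, so there are $\eps\in(\check\sigma(t,y)(\omega),s)$ and $\delta=\delta(\omega)>0$ with $Y^y_\eps(\omega)>b(t+\eps)+\delta$. By joint continuity of $(u,z)\mapsto Y^z_u(\omega)$ and continuity of $b$, for all $n$ large we have $Y^{y_n}_\eps(\omega)>Y^y_\eps(\omega)-\tfrac{\delta}{2}$, $b(t_n+\eps)<b(t+\eps)+\tfrac{\delta}{2}$ and $\eps<T-t_n$, whence $Y^{y_n}_\eps(\omega)>b(t_n+\eps)$ and therefore $\sigma_*(t_n,y_n)(\omega)=\check\sigma(t_n,y_n)(\omega)\le\eps<s$. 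Letting $s\downarrow\check\sigma(t,y)(\omega)=\sigma_*(t,y)(\omega)$ and then ranging over $\omega$ yields $\limsup_n\sigma_*(t_n,y_n)\le\sigma_*(t,y)$, $\P$-a.s.

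\textbf{Lower bound.} Here I use the hitting representation $\hat\sigma$. If $\sigma_*(t,y)(\omega)=0$ there is nothing to prove. Otherwise, by Lemmas~\ref{lem:sigma*hat}--\ref{lem:sigma*=check-sigma}, $(t,y)$ can belong neither to $\inter(\cM)$ nor to $\partial\cI$ (on both sets $\sigma_*=0$ $\P$-a.s.), so $y<b(t)$ and, for every $s\in(0,\sigma_*(t,y)(\omega))$, the path stays strictly below the boundary up to the stopping time, giving $\inf_{u\in[0,s]}\big(b(t+u)-Y^y_u(\omega)\big)\eqqcolon 2\delta>0$. By joint continuity of $(u,z)\mapsto Y^z_u(\omega)$ on the compact $[0,s]$ and uniform continuity of $u\mapsto b(t_n+u)-b(t+u)$ on $[0,s]$, for all $n$ large we get $\inf_{u\in[0,s]}\big(b(t_n+u)-Y^{y_n}_u(\omega)\big)\ge\delta>0$ and $s<T-t_n$, hence $\sigma_*(t_n,y_n)(\omega)=\hat\sigma(t_n,y_n)(\omega)\ge s$. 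Letting $s\uparrow\sigma_*(t,y)(\omega)$ and ranging over $\omega$ gives $\liminf_n\sigma_*(t_n,y_n)\ge\sigma_*(t,y)$, $\P$-a.s. Combining the two bounds proves the proposition.

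The only delicate point is the simultaneous use of the two representations of $\sigma_*$: the hitting time $\hat\sigma$ for the lower bound, so that the path is known to lie strictly below $b$ up to the stopping time and hence is uniformly separated from it on compact time-intervals; and the crossing time $\check\sigma$ for the upper bound, so that, arbitrarily shortly after the stopping time, the path is known to lie strictly above $b$. This is exactly what Lemmas~\ref{lem:sigma*hat}--\ref{lem:sigma*=check-sigma} provide; once they are in hand, the argument is essentially a transcription of the proof of Proposition~\ref{prop:tauconv}, together with the treatment of the boundary cases $t=T$ and $\sigma_*(t,y)=T-t$.
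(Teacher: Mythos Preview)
Your proof follows essentially the same route as the paper's: use the crossing representation $\check\sigma$ (Lemma~\ref{lem:sigma*=check-sigma}) for the $\limsup$ inequality and the hitting representation $\hat\sigma$ (Lemma~\ref{lem:sigma*hat}) for the $\liminf$ inequality, together with continuity of $b$ and of $z\mapsto Y^z$ in the initial datum.

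There is one organisational slip. In the case $t<T$ and $\sigma_*(t,y)(\omega)=T-t$, the inequality $\sigma_*(t_n,y_n)(\omega)\le T-t_n\to T-t$ only yields $\limsup_n\sigma_*(t_n,y_n)(\omega)\le\sigma_*(t,y)(\omega)$, not the full convergence; the lower bound is still owed. The paper handles this by treating $\sigma_*=T-t$ as a subcase of the $\limsup$ argument and then proving the $\liminf$ bound for all $t<T$ without any assumption on $\sigma_*(t,y)$. Your own lower-bound paragraph works verbatim in that case as well (since $\sigma_*(t,y)(\omega)>0$ forces $y<b(t)$ by Lemma~\ref{lem:sigma*hat}), so the fix is simply to remove ``$\sigma_*(t,y)(\omega)=T-t$'' from the preamble and let the general lower-bound step cover it.
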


\begin{proof}
To simplify the notation, we let $\sigma_*=\sigma_*(t,y)$, $\sigma^n_*=\sigma_*(t_n,y_n)$, $\check\sigma=\check\sigma(t,y)$ and $\check\sigma^n=\check\sigma(t_n,y_n)$ for every $n\in\N$. Let $\Omega_n\coloneqq\{\sigma^n_*=\check\sigma^n \}$, $\Omega_0\coloneqq\{\sigma_*=\check\sigma\}$ and $\bar\Omega\coloneqq\cap_{n\in\N\cup\{0\}}\Omega_n\in\cF$. Then, by Lemma \ref{lem:sigma*=check-sigma}, we have that $\P(\bar\Omega)=1$. In the proof we consider $\omega\in\bar\Omega$ fixed but we omit it to simplify the notation. If $t=T$, the claim is easy to prove as $\sigma_*=0$ and so $|\sigma^n_*-\sigma_*|\leq T-t_n\to 0$ as $n\to\infty$.

Let $t<T$. In this case, we show the desired result $\lim_{n\to\infty}\sigma_*^n=\sigma_*$ by first showing that $\limsup_{n\to\infty}\sigma_*^n\leq\sigma_*$ and then showing that $\liminf_{n\to\infty}\sigma_*^n\geq\sigma_*$.

To prove the first inequality, we have that either $Y^y_s< b(t+s)$ for every $s\in [0,T-t)$ or there exists $s\in [0,T-t)$ such that $Y^y_s\geq b(t+s)$. In the former case, we obtain $\sigma_*=T-t$ and so
$$\limsup_{n\to\infty}\sigma^n_*\leq\limsup_{n\to\infty}(T-t_n)=T-t=\sigma_*.$$
In the latter case, by Lemma \ref{lem:sigma*=check-sigma}, there exists $s'\in[0,T-t)$ such that $Y^y_{s'}>b(t+s')$. Then, continuity of $y\mapsto Y^y_{s'}$ and of $t\mapsto b(t+s')$ yield $Y^{y_n}_{s'}>b(t_n+s')$ for $n$ sufficiently large. Hence, $\check\sigma^n\leq s'$ and so $\limsup_{n\to\infty} \check\sigma^n\leq s'$. Since the last inequality holds for any $s'\in [0,T-t)$ such that $Y^y_{s'}>b(t+s')$, we also obtain $\limsup_{n\to\infty} \check\sigma^n\leq \check\sigma$. Therefore, by Lemma \ref{lem:sigma*=check-sigma}, we have that
$$\limsup_{n\to\infty} \sigma_*^n=\limsup_{n\to\infty} \check\sigma^n\leq \check\sigma=\sigma_*$$
and the first inequality is proved.

To show the second inequality, first assume that $y\geq b(t)$. Then, $\sigma_*=0$ and so $$\liminf_{n\to\infty}\sigma^n_*\geq 0=\sigma_*.$$
Now let $y<b(t)$ so that $\sigma_*>0$, $\P$-a.s. By continuity of $u\mapsto b(t+u)$ and $u\mapsto Y^y_u$, let $s\in(0, \sigma_*)$ be such that $Y^y_u<b(t+u)$ for every $u\in [0,s]$. In particular, we have that
$$\inf_{u\in[0,s]}\big(b(t+u)-Y^y_u \big)\eqqcolon 2\delta>0.$$
Then, by continuity of $y\mapsto Y^y_u$ uniformly in $u\in [0,s]$ and of $t\mapsto b(t+u)$, we obtain
$$\inf_{u\in[0,s]}\big(b(t_n+u)-Y^{y_n}_u \big)\geq \delta>0,$$
for $n$ sufficiently large. Hence, $\sigma^n_*\geq s$ and so $\liminf_{n\to\infty} \sigma^n_*\geq s$. Since $s\in (0, \sigma_*)$ such that $Y^y_u<b(t+u)$ for every $u\in [0,s]$ was arbitrary, we have that
$$\liminf_{n\to\infty} \sigma^n_*\geq \sigma_*,$$
which concludes the proof.
\end{proof}

\section{Differentiability of \texorpdfstring{$v_{x}$}{vₓ} at the boundary \texorpdfstring{$b$}{b}}\label{sec:vxdiff}
In this section, we study the differentiability of the function $v_{x}$ across the boundary $b$. In particular, we will show that both $v_{tx}$ and $v_{xx}$ are continuous across $b$ and, more generally, in $\cC$.

We first focus on the mixed derivative. We recall that $v_{x}\equiv \alpha_0$ inside $\cM$, i.e., $v_{tx}(s,y)=0$ for all $(s,y)\in\inter(\cM)$. By classical interior regularity results for parabolic PDEs \cite[Th.\ 3.10]{friedman2008partial} we have that $v_{tx}$ is continuous inside $\cC\cap\cI$. Thus, it remains to show the continuity of $v_{tx}$ at the boundary $b$. We start by proving that $t\mapsto v_x(t,y)$ is locally Lipschitz continuous.

\begin{proposition}\label{prop:v_x-Lip-time}
The function $t\mapsto v_{x}(t,y)$ is locally Lipschitz continuous. In particular, for every $(t,y)\in\cC\cap\cI$ and $\eps\in(0,t)$, we have that
$$|v_x(t,y)-v_x(t-\eps,y)|\leq D_4 (1+|y|^p)\eps.$$
for some constant $D_4>0$.
\end{proposition}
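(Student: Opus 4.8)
Since $t\mapsto v_x(t,y)$ is non-increasing by Proposition \ref{prop:monotonicity}(iii), we have $v_x(t-\eps,y)-v_x(t,y)\ge 0$, so the whole content of the proposition is the \emph{upper} bound $v_x(t-\eps,y)-v_x(t,y)\le D_4(1+|y|^p)\eps$. The plan is to obtain it by a sub-optimality argument for the auxiliary stopping problem \eqref{eq:optstop}. Fix $(t,y)\in\cC\cap\cI$ and $\eps\in(0,t)$, let $Y=Y^y$ solve \eqref{eq:SDE_Y}, and write $\tau_a^\eps\coloneqq\tau_a(t-\eps,y)$. By the distributional supersolution inequality \eqref{eq:ineqPDE_vx} (valid on $\cC$, and $(t-\eps+s,Y_s)\in\cC$ for $s<\tau_a^\eps$), Dynkin's formula together with a localisation shows that $s\mapsto \e^{-\int_0^s\lambda(Y_u)\ud u}v_x(t-\eps+s,Y_s)+\int_0^s\e^{-\int_0^u\lambda(Y_r)\ud r}h_x(t-\eps+u,Y_u)\ud u$ is a submartingale on $[0,\tau_a^\eps]$. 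Optional stopping at $\eps\wedge\tau_a^\eps$, combined with the smooth-fit $v_x(s,a(s))=0$ at $a$ (see \eqref{eq:PDE}), which makes the value at the absorption time vanish, yields
\[
v_x(t-\eps,y)\le \E\Big[\mathds{1}_{\{\tau_a^\eps\ge\eps\}}\e^{-\int_0^\eps\lambda(Y_u)\ud u}v_x(t,Y_\eps)\Big]+\E\Big[\int_0^{\eps\wedge\tau_a^\eps}\!\e^{-\int_0^u\lambda(Y_r)\ud r}h_x(t-\eps+u,Y_u)\ud u\Big].
\]
The second term is $O(\eps(1+|y|^2))$, using $0\le h_x(t-\eps+u,x)\le c(1+|x|^2)$ (Assumption \ref{ass:2}(ii)), the lower bound on $\lambda$ (Assumption \ref{ass:1}(i)), and the moment estimate $\E[\sup_{u\le T}|Y_u|^{2\vee p}]\le K(1+|y|^{2\vee p})$ for \eqref{eq:SDE_Y}. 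So the task reduces to bounding $\E[\mathds{1}_{\{\tau_a^\eps\ge\eps\}}\e^{-\int_0^\eps\lambda}v_x(t,Y_\eps)]-v_x(t,y)$ from above by $C\eps(1+|y|^p)$.

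For this last quantity, note that $a(t-\eps+\tau_a^\eps)\le a(t)$ (monotonicity of $a$) and $v_x(t,\cdot)\equiv 0$ on $(\inf\cO,a(t)]$, so the indicator can be absorbed and $\mathds{1}_{\{\tau_a^\eps\ge\eps\}}\e^{-\int_0^\eps\lambda}v_x(t,Y_\eps)=\e^{-\int_0^{\eps\wedge\tau_a^\eps}\lambda}v_x(t,Y_{\eps\wedge\tau_a^\eps})$. I would then apply the It\^o--Tanaka formula to $s\mapsto \e^{-\int_0^s\lambda(Y_u)\ud u}v_x(t,Y_s)$ on $[0,\eps\wedge\tau_a^\eps]$. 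This is legitimate because $v_x(t,\cdot)\in C^1(\cO)$ with $v_{xx}(t,\cdot)$ bounded (rewrite \eqref{eq:PDE} and use the bound on $v_t$ from Theorem \ref{thm:v_tC0} together with the boundedness of $a(\cdot),b(\cdot)$ on $[0,T]$ from Proposition \ref{prop:monotone}), with $v_{xxx}(t,\cdot)$ bounded below near $b(t)$ (Lemma \ref{lem:vxxxbnd}), and with $v_{xx}(t,\cdot)$ having at most a single jump (at $a(t)$) in the region reachable before $\tau_a^\eps$: the potential kink at $b(t)$ is in fact absent because $v_{xx}(t,b(t)-)=0$ by the smooth-fit of Proposition \ref{prop:limvxxb-} (and $v_{xx}(t,b(t)+)=0$ trivially). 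Taking expectations kills the martingale part and leaves (i) a drift term $\E[\int_0^{\eps\wedge\tau_a^\eps}\e^{-\int_0^u\lambda}(\cG v_x(t,\cdot)-\lambda v_x(t,\cdot))(Y_u)\ud u]$ and (ii) a non-negative local-time term $\tfrac12 v_{xx}(t,a(t)+)\,\E[\int_0^{\eps\wedge\tau_a^\eps}\e^{-\int_0^u\lambda}\ud L_u^{a(t)}(Y)]$. Term (i) vanishes where $v_x(t,\cdot)$ is constant (on $\{Y_u\le a(t)\}\cup\{Y_u\ge b(t)\}$) and equals $-h_x(t,Y_u)-v_{tx}(t,Y_u)$ on $\{a(t)<Y_u<b(t)\}$ by \eqref{eq:PDE_vx}; combining it with the $h_x$-term of the previous display, using \eqref{eq:hx_Lip_time} for the time shift in $h_x$ and the interior continuity (hence local boundedness) of $v_{tx}$ on $\cC\cap\cI$, and noting that everything is integrated over a time interval of length at most $\eps$, one gets $O(\eps(1+|y|^p))$.

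The remaining local-time term (ii) is the main obstacle and the place where Assumption \ref{ass:Ypdf} is really used. The point is that $L_u^{a(t)}(Y)$ is accumulated only before the absorption time $\tau_a^\eps$, hence only while $Y_s$ lies in the thin time-space strip $\{a(t-\eps+s)<Y_s\le a(t)\}$; combining the occupation-time formula for local time with the boundedness of the transition density $p(\,\cdot\,;u,y)$ of $Y_u$ on compacts (Assumption \ref{ass:Ypdf}) and the fact that $a$ is non-decreasing and bounded on $[0,T]$ --- so that $\int_0^{T-t}\big(a(t+s)-a(t-\eps+s)\big)\ud s=O(\eps)$ by a telescoping argument --- one bounds (ii) by $O(\eps)$ with a constant uniform over $(t,y)\in\cC\cap\cI$. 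Assembling (i), (ii) and the running-reward term, and tracking the polynomial growth in $y$ through the moment bounds for $Y^y$, gives the claimed estimate with a universal $D_4$. I expect the genuinely delicate part to be exactly this control of the boundary local time at the sharp rate $O(\eps)$ (rather than the crude $O(\sqrt\eps)$ one obtains by ignoring the restriction $s\le\tau_a^\eps$): the two ingredients that make it work are the smooth-fit $v_x(\cdot,a(\cdot))=0$, which annihilates the contribution of the absorbed paths, and the density regularity of Assumption \ref{ass:Ypdf}.
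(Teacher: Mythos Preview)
Your approach has a genuine gap in the drift term (i). After applying It\^o--Tanaka to $\e^{-\int_0^s\lambda}v_x(t,Y_s)$ on $[0,\eps\wedge\tau_a^\eps]$, the drift on $\{a(t)<Y_u<b(t)\}$ is indeed $-h_x(t,Y_u)-v_{tx}(t,Y_u)$ by \eqref{eq:PDE_vx}. Since $v_{tx}\le 0$, the term $-v_{tx}(t,Y_u)\ge 0$ contributes \emph{positively} to the upper bound you are building for $v_x(t-\eps,y)-v_x(t,y)$, so you need $\sup_{x\in(a(t),b(t))}|v_{tx}(t,x)|<\infty$. Interior regularity only gives continuity of $v_{tx}$ on the \emph{open} set $\cC\cap\cI$, hence boundedness on compacts strictly inside; it says nothing as $x\uparrow b(t)$. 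From \eqref{eq:PDE_vx} one has $v_{tx}=-h_x+\lambda v_x-(\mu+\sigma\sigma_x)v_{xx}-\tfrac12\sigma^2 v_{xxx}$, and Lemma \ref{lem:vxxxbnd} only bounds $v_{xxx}$ from \emph{below} near $b$, so no lower bound on $v_{tx}$ is available there. A uniform bound on $|v_{tx}(t,\cdot)|$ across $(a(t),b(t))$ is essentially the Lipschitz estimate you are trying to prove; the argument is circular.

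The paper avoids this by never letting the process approach $b$ with the time-increment still ``open''. It stops at $\rho=\sigma_*(t,y)\wedge\tau_d(t,y)\wedge S$, where $d$ is a fixed continuous boundary with $a<d<b$. On $\{\sigma_*\le\tau_d\wedge S\}$ the sign works for free since $v_x(t+\sigma_*,\cdot)=\alpha_0\ge v_x(t-\eps+\sigma_*,\cdot)$. On $\{\tau_d\le\sigma_*\wedge S\}$ the process sits at $(t+\tau_d,d(t+\tau_d))$, strictly inside $\cC\cap\cI$, so interior regularity gives a legitimate bound on $v_{tx}$. On $\{S<\sigma_*\wedge\tau_d\}$ the key trick is an integration by parts in space against the density $p(\,\cdot\,;S,y)$ of $Y^y_S$ (this is where Assumption \ref{ass:Ypdf} enters, at the \emph{fixed} positive time $S$), which converts the $v_x$-difference into a $v$-difference controlled by the already-available polynomial bound on $v_t$ from Theorem \ref{thm:v_tC0}, not by $v_{tx}$.

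A secondary issue: your $O(\eps)$ bound for the local-time term (ii) is not justified as written. The transition density $p(\,\cdot\,;u,y)$ is \emph{not} uniformly bounded for small $u$ (it behaves like $u^{-1/2}$ near the starting point), so the occupation-time argument you sketch does not directly yield the sharp rate; and the telescoping integral $\int_0^{T-t}(a(t+s)-a(t-\eps+s))\ud s=O(\eps)$, while correct, is not obviously connected to $\E[L^{a(t)}_{\eps\wedge\tau_a^\eps}]$.
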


\begin{proof}
 Fix $(t,y)\in\cC\cap\cI$, so that $a(t)<y<b(t)$. By standard results in optimal stopping theory, it is well known that the following (sub-)martingale property holds for the stopping problem \eqref{eq:optstop}
 \begin{equation}\label{eq:v_xMart}
 v_x(t,y) \leq \E\Big[\int_0^\rho \e^{-\int_0^s\lambda(Y^y_u)\ud u}h_x(t+s,Y^y_s)\ud s+\e^{-\int_0^\rho \lambda(Y^y_s)\ud s} v_x(t+\rho,Y^y_\rho)\Big],
 \end{equation}
 for every stopping time $\rho\in[0,\tau_a(t,y)]$, with equality if $\rho\leq \sigma_*(t,y)$, $\P$-a.s.

Let $d:[0,T]\to\R$ be a continuous, non-decreasing boundary such that $a(s)<d(s)<b(s)$ for every $s\in[0,T]$ and such that $y>d(t)$. Such a boundary exists by \eqref{eq:a<b}. Accordingly, define
\begin{align}\label{eq:tau_d}
 \tau_d=\tau_d(t,y)\coloneqq\inf\{s\geq 0|Y^y_s\leq d(t+s) \}\wedge(T-t).
\end{align}
Fix $\eps\in(0,t)$. Since $a(t-\eps+s)<d(t-\eps+s)\leq d(t+s)$ for every $s\in[0, T-t)$, we have that
$$\tau_d(t,y)\leq \tau_d(t-\eps,y)\leq\tau_a(t-\eps,y), \quad \P\text{-a.s.}$$
Therefore, by \eqref{eq:v_xMart} and letting $\rho=\sigma_*(t,y)\wedge\tau_d(t,y)\wedge S$ for a fixed $S\in(0,T-t)$, we obtain
\begin{align}\label{eq:v_x-diff}
\begin{aligned}
v_x(t,y)-v_x(t-\eps,y) &\geq \E\Big[\int_0^\rho \e^{-\int_0^s\lambda(Y^y_u)\ud u}\big(h_x(t+s,Y^y_s)-h_x(t-\eps+s,Y^y_s)\big)\ud s\Big] \\
&\hspace{11pt}+\E\Big[\e^{-\int_0^\rho \lambda(Y^y_s)\ud s} \big(v_x(t+\rho,Y^y_\rho)-v_x(t-\eps+\rho,Y^y_\rho)\big)\Big].
\end{aligned}
\end{align}
For the first term on the right-hand side of \eqref{eq:v_x-diff}, by \eqref{eq:hx_Lip_time} and standard estimates for SDEs, we have that
\begin{align}\label{eq:h_x-diff}
\begin{aligned}
\E\Big[\int_0^\rho\! \e^{-\int_0^s\lambda(Y^y_u)\ud u}\big(h_x(t\!+\!s,Y^y_s)\!-\!h_x(t\!-\!\eps\!+\!s,Y^y_s)\big)\ud s\Big]&\ge-C_0\eps\E\Big[\rho \sup_{s\in[0,T-t]}\big(1\!+\!|Y_s^y|^p\big)\Big]\\
&\geq -C_1(1+|y|^p)\eps,
\end{aligned}
\end{align}
where $C_0>0$ takes in account the locally Lipschitz property of $h_x$ \eqref{eq:hx_Lip_time} and $\sup_{y\in\cO} \e^{T\mu_x(y)}$, and $C_1>0$ is a suitable constant collecting all terms independent of $\eps$ (notice that $\rho\le T$, $\P$-a.s.). The second term of \eqref{eq:v_x-diff} is instead split as follows
\begin{align}
 \begin{aligned}
 \E\Big[&\e^{-\int_0^\rho \lambda(Y^y_s)\ud s} \big(v_x(t+\rho,Y^y_\rho)-v_x(t-\eps+\rho,Y^y_\rho)\big)\Big]\\
 &=\E\Big[\ind_{\{\sigma_*\leq\tau_d\wedge S \}}\e^{-\int_0^{\sigma_*} \lambda(Y^y_s)\ud s} \big(v_x(t+\sigma_*,Y^y_{\sigma_*})-v_x(t-\eps+\sigma_*,Y^y_{\sigma_*})\big)\Big] \\
 &\hspace{12pt}+\E\Big[\ind_{\{\tau_d\le\sigma_*\wedge S \}}\e^{-\int_0^{\tau_d} \lambda(Y^y_s)\ud s} \big(v_x(t+\tau_d,Y^y_{\tau_d})-v_x(t-\eps+\tau_d,Y^y_{\tau_d})\big)\Big]\\
 &\hspace{12pt}+\E\Big[\ind_{\{S<\sigma_*\wedge \tau_d \}}\e^{-\int_0^S \lambda(Y^y_s)\ud s} \big(v_x(t+S,Y^y_{S})-v_x(t-\eps+S,Y^y_{S})\big)\Big].
\end{aligned}
\end{align}
Then, using that $v_x(t+\sigma_*,Y^y_{\sigma_*})=\alpha_0\geq v_x(t-\eps+\sigma_*,Y^y_{\sigma_*})$, we obtain
\begin{align}\label{eq:v_x-diff@rho}
 \begin{aligned}
 \E\Big[&\e^{-\int_0^\rho \lambda(Y^y_s)\ud s} \big(v_x(t+\rho,Y^y_\rho)-v_x(t-\eps+\rho,Y^y_\rho)\big)\Big]\\
 &\geq\E\Big[\ind_{\{\tau_d\le\sigma_*\wedge S \}}\e^{-\int_0^{\tau_d} \lambda(Y^y_s)\ud s} \big(v_x(t+\tau_d,Y^y_{\tau_d})-v_x(t-\eps+\tau_d,Y^y_{\tau_d})\big)\Big]\\
 &\hspace{12pt}+\E\Big[\ind_{\{S<\sigma_*\wedge \tau_d \}}\e^{-\int_0^S \lambda(Y^y_s)\ud s} \big(v_x(t+S,Y^y_{S})-v_x(t-\eps+S,Y^y_{S})\big)\Big],
\end{aligned}
\end{align}
For the first term on the right-hand side of \eqref{eq:v_x-diff@S}, by the Mean Value theorem, we have that
 \begin{align*}
 \ind_{\{\tau_d\le\sigma_*\wedge S \}}\e^{-\int_0^{\tau_d} \lambda(Y^y_s)\ud s}& \big(v_x(t+\tau_d,Y^y_{\tau_d})-v_x(t-\eps+\tau_d,Y^y_{\tau_d})\big)\\
 &\geq -\ind_{\{\tau_d\le\sigma_*\wedge S \}}C_2 \big|v_{tx}(t+\tau_d-\xi,Y^y_{\tau_d})\big|\eps,
 \end{align*}
 where $C_2\coloneqq \sup_{y\in\cO}\e^{T\mu_x(y)}$ (recall \eqref{eq:lambda} and that $\mu_x$ is bounded from above by Assumption \ref{ass:1}). Then, by continuity of $Y$ and $d$, we obtain
 \begin{equation}\label{eq:v_x-diff@tau_d}
 \ind_{\{\tau_d\le\sigma_*\wedge S \}}\e^{-\int_0^{\tau_d} \lambda(Y^y_s)\ud s} \big(v_x(t+\tau_d,Y^y_{\tau_d})-v_x(t-\eps+\tau_d,Y^y_{\tau_d})\big)\geq -\ind_{\{\tau_d\le\sigma_*\wedge S \}} C_3\eps,
 \end{equation}
 where $C_3\coloneqq C_2\sup_{s\in[0,S]}\sup_{\xi\in[0,t_1]}|v_{tx}(t\!+\!s\!-\!\xi,d(t\!+\!s))|<\infty$, for $t_1\in(\eps,t)$ fixed, by classical interior regularity results for solutions of PDEs (see \cite[Th.\ 3.10]{friedman2008partial}). Indeed, \cite[Th.\ 3.10]{friedman2008partial} implies the continuity of $v_{tx}$ inside the open domain $\cC\cap\cI$. For the second term on the right-hand side of \eqref{eq:v_x-diff@S}, by introducing the probability density function $z\mapsto p(z;S,y)$ of the random variable $Y^y_S$, we have that
 \begin{align}\label{eq:v_x-diff@S}
\begin{aligned} &\E\Big[\ind_{\{S<\sigma_*\wedge \tau_d \}}\e^{-\int_0^S \lambda(Y^y_s)\ud s} \big(v_x(t+S,Y^y_{S})-v_x(t-\eps+S,Y^y_{S})\big)\Big] \\
&\geq-C_2\E\Big[\ind_{\{S<\sigma_*\wedge \tau_d \}}\big(v_x(t-\eps+S,Y^y_{S})-v_x(t+S,Y^y_{S})\big)\Big]\\
&\geq -C_2\int_{d(t+s)}^{b(t+s)} \big(v_x(t-\eps+S,z)-v_x(t+S,z) \big)p(z;S,y)\ud z\\
&=-C_2\Big(\big[(v(t-\eps+S,z)-v(t+S,z))p(z;S,y) \big]^{z=b(t+s)}_{z=d(t+s)}\\
&\hspace{30pt}+\int_{d(t+s)}^{b(t+s)}\big(v(t+S,z)-v(t-\eps+S,z) \big)p'(z;S,y)\ud z\Big)\\
&\geq -C_2 \Big( 1+\int_{d(t+s)}^{b(t+s)}\sup_{\xi\in[0,\eps]}|v_t(t+S-\xi,z)|p'(z;S,y)\ud z\Big)\eps\\
&\geq -C_4\Big( 1+\int_{d(t+s)}^{b(t+s)}D_1(1+|z|^{2\vee p})p'(z;S,y)\ud z\Big)\eps\eqqcolon-C_5\, \eps,
\end{aligned}
\end{align}
where we have used integration by parts in the equality; we have used that $z\mapsto p'(z;S,y)$ is continuous by Assumption \ref{ass:Ypdf} in the third and fourth inequality where $C_4>0$ is a constant depending on $p$ and $v$; the last inequality follows by \eqref{eq:M_time} and letting $C_5>0$ a sufficiently large constant. By using \eqref{eq:h_x-diff}, \eqref{eq:v_x-diff@rho}, \eqref{eq:v_x-diff@tau_d} and \eqref{eq:v_x-diff@S} in \eqref{eq:v_x-diff}, we obtain
 $$v_x(t,y)-v_x(t-\eps,y)\geq -D_4 (1+|y|^p)\eps,$$
 for some constant $D_4>0$. Recalling that $v_x(t,y)-v_x(t-\eps,y)\leq 0$ by Proposition \ref{prop:monotonicity}, we reach the desired result $|v_x(t,y)-v_x(t-\eps,y)|\leq D_4 (1+|y|^p)\eps.$
\end{proof}

\begin{theorem}\label{thm:v_txCont}
The function $v_{tx}$ is continuous at the boundary $\partial \cI$, i.e., for all $(t_0,y_0)\in\partial\cI$ with $t_0<T$ and for any sequence $((t_n,y_n))_{n\in\N}\subseteq\cI$ such that $(t_n,y_n)\to(t_0,y_0)$ as $n\to\infty$, we have
\begin{align*}
\lim_{n\to\infty}v_{tx}(t_n,y_n)=0.
\end{align*}
\end{theorem}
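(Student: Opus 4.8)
The plan is to establish the two one-sided bounds $\limsup_{n}v_{tx}(t_n,y_n)\le 0$ and $\liminf_{n}v_{tx}(t_n,y_n)\ge 0$ separately. The first is immediate: $t\mapsto v_x(t,y)$ is non-increasing by Proposition \ref{prop:monotonicity}(iii), so $v_{tx}\le 0$ wherever it exists, and in particular on the open set $\cC\cap\cI$, where $v_x\in C^{1,2;\gamma}_{\ell oc}$ and $v_{tx}$ is continuous by classical interior regularity for the PDE \eqref{eq:PDE_vx} (see \cite[Th.\ 3.10]{friedman2008partial}); thus $v_{tx}(t_n,y_n)$ is a genuine derivative, $v_{tx}(t_n,y_n)=\lim_{\eps\downarrow 0}\bigl(v_x(t_n,y_n)-v_x(t_n-\eps,y_n)\bigr)/\eps$. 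Note also that, since $(t_0,y_0)\in\partial\cI$ forces $y_0=b(t_0)>a(t_0)$ by \eqref{eq:a<b} and $a$ is continuous, all points of $\cI$ close to $(t_0,y_0)$ satisfy $a(t)<y<b(t)$, so we may assume $(t_n,y_n)\in\cC\cap\cI$ for all large $n$. Finally, $(t_0,y_0)\in\partial\cI\subseteq\cM$, hence $\sigma_*(t_0,y_0)=0$ $\P$-a.s.\ by Lemma \ref{lem:sigma*hat}, so $\sigma^n_*\coloneqq\sigma_*(t_n,y_n)\to 0$ $\P$-a.s.\ by Proposition \ref{prop:sigma_n-to-sigma_*}; and, arguing exactly as in the $\liminf$ part of the proof of Proposition \ref{prop:tauconv} (the process $Y^{y_0}$ starts strictly above the curve $a$, so it stays above it for a positive time), one gets $\liminf_n\tau_a(t_n,y_n)\ge\tau_a(t_0,y_0)>0$ $\P$-a.s.

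For the lower bound, fix $n$ large and $\eps\in(0,t_n)$, and set $\rho_n\coloneqq\sigma^n_*\wedge\tau_a(t_n,y_n)$. Since $a$ is non-decreasing we have $\rho_n\le\tau_a(t_n,y_n)\le\tau_a(t_n-\eps,y_n)$ and $\rho_n\le\sigma^n_*$, so the (sub-)martingale property \eqref{eq:v_xMart} holds with equality at $(t_n,y_n)$ and as an inequality at $(t_n-\eps,y_n)$; subtracting yields
\begin{align*}
v_x(t_n,y_n)-v_x(t_n-\eps,y_n)&\ge \E\Big[\int_0^{\rho_n}\!\e^{-\int_0^s\lambda(Y^{y_n}_u)\ud u}\bigl(h_x(t_n+s,Y^{y_n}_s)-h_x(t_n-\eps+s,Y^{y_n}_s)\bigr)\ud s\Big]\\
&\quad+\E\Big[\e^{-\int_0^{\rho_n}\lambda(Y^{y_n}_s)\ud s}\bigl(v_x(t_n+\rho_n,Y^{y_n}_{\rho_n})-v_x(t_n-\eps+\rho_n,Y^{y_n}_{\rho_n})\bigr)\Big].
\end{align*}
On the event $\Omega_n\coloneqq\{\sigma^n_*\le\tau_a(t_n,y_n)\}\cap\{\sigma^n_*<T-t_n\}$ we have $\rho_n=\sigma^n_*$ and, by \eqref{eq:sigma*}, $v_x(t_n+\rho_n,Y^{y_n}_{\rho_n})=\alpha_0\ge v_x(t_n-\eps+\rho_n,Y^{y_n}_{\rho_n})$, so the boundary term is non-negative there. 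On $\Omega_n^c$, either $\rho_n=\tau_a(t_n,y_n)<T-t_n$ (so $Y^{y_n}_{\rho_n}=a(t_n+\rho_n)$, which is bounded since $a$ is bounded on $[0,T]$ by Proposition \ref{prop:monotone}) or $\rho_n=T-t_n$; in the first case we bound the boundary term from below using Proposition \ref{prop:v_x-Lip-time} (extended to $\overline{\cC\cap\cI}$ by continuity of $v_x$, and using that $t\mapsto v_x(t,a(\cdot))=0$ on the flats of $a$), in the second case using the near-terminal estimate $v_x(T-\eps,z)\le C\eps(1+|z|^2)$ obtained by taking $\tau=\eps$ in \eqref{eq:optstop} and using $0\le h_x(t,x)\le c(1+|x|^2)$. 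Combining this with \eqref{eq:hx_Lip_time}, with the uniform bound on $\e^{-\int\lambda}$ (as $\mu_x$ is bounded above and $r\ge0$), and with $\rho_n\le T$, we obtain a constant $\hat D>0$ independent of $\eps$ and $n$ such that
\begin{align*}
\frac{v_x(t_n,y_n)-v_x(t_n-\eps,y_n)}{\eps}\ge -\hat D\,\E\Big[\rho_n\sup_{s\le T}\bigl(1+|Y^{y_n}_s|^{2\vee p}\bigr)\Big]-\hat D\,\E\Big[\ind_{\Omega_n^c}\bigl(1+\sup_{s\le T}|Y^{y_n}_s|^{2\vee p}\bigr)\Big].
\end{align*}
The right-hand side does not depend on $\eps$, so letting $\eps\downarrow 0$ gives the same lower bound for $v_{tx}(t_n,y_n)$. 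Letting now $n\to\infty$, we have $\rho_n\le\sigma^n_*\to 0$ $\P$-a.s.\ and $\ind_{\Omega_n^c}\to 0$ $\P$-a.s.\ (because $\sigma^n_*\to 0$ while $\liminf_n\tau_a(t_n,y_n)>0$ and $T-t_n\to T-t_0>0$), while $\{\sup_{s\le T}(1+|Y^{y_n}_s|^{2\vee p})\}_n$ is uniformly integrable by standard $L^q$ moment estimates for \eqref{eq:SDE_Y} (the $y_n$ being bounded); hence both expectations vanish in the limit and $\liminf_n v_{tx}(t_n,y_n)\ge 0$, which completes the proof.

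The main obstacle is controlling the contribution of the "bad" event $\Omega_n^c$ in the boundary term, and this is precisely where the preparatory results of Section \ref{sec:convergence} are needed: the $\P$-a.s.\ convergence $\sigma^n_*\to 0$ (Proposition \ref{prop:sigma_n-to-sigma_*}, itself resting on the identifications $\sigma_*=\hat\sigma=\check\sigma$ in Lemmas \ref{lem:sigma*hat}--\ref{lem:sigma*=check-sigma}) forces $\rho_n\to0$ and makes $\rho_n$ equal to $\sigma^n_*$ off a set of vanishing probability, while the continuity of the hitting time $\tau_a$ in the starting data guarantees $\P(\Omega_n^c)\to0$. The remaining difficulty is quantitative: on $\Omega_n^c$ one must keep the boundary term of order $\eps$ \emph{uniformly} in $n$, which requires the locally Lipschitz-in-time estimate of Proposition \ref{prop:v_x-Lip-time} to be available up to the boundary $\partial\cC$ (where $Y^{y_n}_{\rho_n}$ sits) and to be supplemented by the separate estimate near the terminal time, and then the uniform integrability of the suprema of $Y$ to pass to the limit in $n$.
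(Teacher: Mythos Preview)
Your overall strategy matches the paper's: $\limsup\le 0$ is immediate from Proposition \ref{prop:monotonicity}(iii), and for the lower bound you compare the (sub-)martingale representations of $v_x(t_n,\cdot)$ and $v_x(t_n-\eps,\cdot)$, exploit that the boundary contribution is non-negative when $\sigma_*^n$ is reached first, and pass to the limit via $\sigma_*(t_n,y_n)\to 0$ from Proposition \ref{prop:sigma_n-to-sigma_*}. Your near-terminal bound $v_x(T-\eps,z)\le C\eps(1+|z|^2)$ (taking $\tau=T-t$ in \eqref{eq:optstop}, which kills $\ind_{\{\tau<\tau_a\}}$) is also correct.

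The gap is in your treatment of the event $\{\rho_n=\tau_a(t_n,y_n)<T-t_n\}$, where the stopped process lands on $\partial\cC$. There you need $v_x(s,a(s))-v_x(s-\eps,a(s))\ge -C\eps$ with a constant independent of $s$ and $n$, and you invoke Proposition \ref{prop:v_x-Lip-time} ``extended to $\overline{\cC\cap\cI}$ by continuity''. That extension is not available: the Lipschitz constant produced in the proof of Proposition \ref{prop:v_x-Lip-time} depends on an auxiliary curve $d$ strictly between $a$ and $b$ (through $C_3=C_2\sup|v_{tx}(\cdot,d(\cdot))|$), and letting the evaluation point approach $\partial\cC$ forces $d\to a$, whereupon $C_3$ is uncontrolled. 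Put differently, since $v_x(s,a(s))=0$, your estimate on this event amounts to $v_x(s-\eps,a(s))\le C\eps$; combining the local bound on $v_{xx}$ near $a$ with $v_x(s-\eps,a(s-\eps))=0$ this reduces to $a(s)-a(s-\eps)\le C'\eps$, i.e.\ Lipschitz continuity of $a$, which is nowhere established. Your remark about flats of $a$ handles only the degenerate case.

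The paper avoids touching $\partial\cC$ altogether: it replaces $\tau_a$ by $\tau_d$ for a fixed continuous non-decreasing curve $d$ with $a<d<b$, and also truncates by a fixed deterministic time $S\in(0,T-t)$, taking $\rho=\sigma_*\wedge\tau_d\wedge S$. Off $\{\sigma_*\le\tau_d\wedge S\}$ the stopped process then lies either on the graph of $d$ (where interior regularity for \eqref{eq:PDE_vx} bounds $v_{tx}$ directly) or at time $t+S$ strictly between $d$ and $b$ (where Proposition \ref{prop:v_x-Lip-time} applies with a constant depending only on $d$ and $S$). This yields
\[
v_{tx}(t,y)\ge -\bar K\Big(\P(\tau_d<\sigma_*\wedge S)+\sqrt{\P(S<\sigma_*\wedge\tau_d)}+\sqrt{\E[\sigma_*^2]}\Big),
\]
and all three terms vanish along $(t_n,y_n)\to(t_0,y_0)$ since $\sigma_*(t_n,y_n)\to 0$ while $\tau_d,S>0$. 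Your argument is repaired by the same device: stop at $\tau_d$ rather than $\tau_a$.
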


\begin{proof}
 This proof is very similar to the one of Proposition \ref{prop:v_x-Lip-time} and so we refer to that proof for a detailed presentation.

 Fix $(t,y)\in\cC\cap\cI$, so that $a(t)<y<b(t)$. We consider the same function $d:[0,T]\to\R$ as in Proposition \ref{prop:v_x-Lip-time} which is continuous, non-decreasing and such that $a(s)<d(s)<b(s)$ for every $s\in[0,T]$ and such that $y>d(t)$.
 
 Fix $\eps\in(0,t)$. Since $a(t-\eps+s)<d(t-\eps+s)\leq d(t+s)$ for every $s\in[0, T-t)$, we have that
 $$\tau_d(t,y)\leq \tau_d(t-\eps,y)\leq\tau_a(t-\eps,y), \quad \P\text{-a.s.}$$
 Similarly to \eqref{eq:v_x-diff}, letting $\rho=\sigma_*(t,y)\wedge\tau_d(t,y)\wedge S$ for a fixed $S\in(0,T-t)$, we obtain
 \begin{align}\label{eq:v_x-diff1}
 \begin{aligned}
 v_x(t,y)-v_x(t-\eps,y) &\geq \E\Big[\int_0^\rho \e^{-\int_0^s\lambda(Y^y_u)\ud u}\big(h_x(t+s,Y^y_s)-h_x(t-\eps+s,Y^y_s)\big)\ud s\Big] \\
 &\hspace{11pt}+\E\Big[\e^{-\int_0^\rho \lambda(Y^y_s)\ud s} \big(v_x(t+\rho,Y^y_\rho)-v_x(t-\eps+\rho,Y^y_\rho)\big)\Big],
\end{aligned}
 \end{align}
 where we also recall the definition of $\tau_d$ from \eqref{eq:tau_d}.
 
 For the first term on the right-hand side of \eqref{eq:v_x-diff1}, we refine the estimate in \eqref{eq:h_x-diff} using that $\rho\le\sigma_*$, and we apply H\"older's inequality
\begin{equation}\label{eq:h_x-diff1}
\E\Big[\int_0^\rho \e^{-\int_0^s\lambda(Y^y_u)\ud u}\big(h_x(t+s,Y^y_s)-h_x(t-\eps+s,Y^y_s)\big)\ud s\Big]\geq -\eps\,C_1\sqrt{(1+ |y|^{2p})}\sqrt{\E[\sigma_*^2]},
\end{equation}
where $p\in[1,\infty)$, $C_1>0$ are constants derived from the (local) Lipschitz continuity of $t\mapsto h_x(t,y)$ (recall \eqref{eq:hx_Lip_time}) and we have used standard estimates for SDEs. For the second term of \eqref{eq:v_x-diff1}, in the same way as for \eqref{eq:v_x-diff@rho}, we obtain
\begin{align}\label{eq:v_x-diff@rho1}
\begin{aligned}
&\E\Big[\e^{-\int_0^\rho \lambda(Y^y_s)\ud s} \big(v_x(t+\rho,Y^y_\rho)-v_x(t-\eps+\rho,Y^y_\rho)\big)\Big]\\
&\ge\E\Big[\ind_{\{\tau_d\le\sigma_*\wedge S \}}\e^{-\int_0^{\tau_d} \lambda(Y^y_s)\ud s} \big(v_x(t+\tau_d,Y^y_{\tau_d})-v_x(t-\eps+\tau_d,Y^y_{\tau_d})\big)\Big]\\
&\hspace{12pt}+\E\Big[\ind_{\{S<\sigma_*\wedge \tau_d \}}\e^{-\int_0^S \lambda(Y^y_s)\ud s} \big(v_x(t+S,Y^y_{S})-v_x(t-\eps+S,Y^y_{S})\big)\Big].
 \end{aligned}
 \end{align}
 For the first term on the right-hand side of \eqref{eq:v_x-diff@rho1}, in the same way as for \eqref{eq:v_x-diff@tau_d}, we obtain
 \begin{equation}\label{eq:v_x-diff@tau_d1}
 \ind_{\{\tau_d\le\sigma_*\wedge S \}}\e^{-\int_0^{\tau_d} \lambda(Y^y_s)\ud s} \big(v_x(t+\tau_d,Y^y_{\tau_d})-v_x(t-\eps+\tau_d,Y^y_{\tau_d})\big)\geq -\ind_{\{\tau_d\le\sigma_*\wedge S \}} C_2\eps,
 \end{equation}
 for some $C_2>0$. For the second term on the right-hand side of \eqref{eq:v_x-diff@rho1}, we have that
 \begin{align}\label{eq:v_x-diff@S1}
 \begin{aligned}
 \E&\Big[\ind_{\{S<\sigma_*\wedge \tau_d \}}\e^{-\int_0^S \lambda(Y^y_s)\ud s} \big(v_x(t+S,Y^y_{S})-v_x(t-\eps+S,Y^y_{S})\big)\Big]\\
 &\geq - D_4\, \eps\,\E\Big[\ind_{\{S<\sigma_*\wedge \tau_d \}}\big(1+\sup_{s\in[0,S]}|Y^y_S|^p\big) \Big]\geq -\eps\, C_3 \sqrt{\P(S<\sigma_*\wedge\sigma_d)},
 \end{aligned}
 \end{align}
 for some $C_3>0$, where the first inequality is justified by Proposition \ref{prop:v_x-Lip-time} and we have used standard estimates for SDEs and H\"older's inequality in the second inequality. Plugging \eqref{eq:h_x-diff1}, \eqref{eq:v_x-diff@rho1}, \eqref{eq:v_x-diff@tau_d1} and \eqref{eq:v_x-diff@S1} into \eqref{eq:v_x-diff1}, yields
\begin{align*}
v_x(t,y)\!-\!v_x(t\!-\!\eps,y) \geq -\eps\,\bar K \Big(\P(\tau_d<\sigma_*\wedge S) + \sqrt{\P(S<\sigma_*\wedge\tau_d)}+\sqrt{\E[\sigma_*^2]}\Big),
\end{align*}
for a given constant $\bar K>0$. Dividing by $\eps$ and letting $\eps\to 0$, we obtain
\begin{equation}\label{eq:v_tx>}
v_{tx}(t,y)\geq -\bar K \Big( \P(\tau_d<\sigma_*\wedge S) + \sqrt{\P(S<\sigma_*\wedge\tau_d)}+\sqrt{\E[\sigma_*^2]}\Big), \qquad \forall \: (t,y)\in\cC\cap\cI.
\end{equation}
Now, let $(t_0,y_0)\in\partial\cI$ with $t_0<T$ and consider a sequence $((t_n,y_n))_{n\in\N}\subseteq \cI$ such that $(t_n,y_n)\to (t_0,y_0)$ as $n\to\infty$. Since the boundaries $a$ and $b$ are separated, we can take $((t_n,y_n))_{n\in\N}\subseteq \cC\cap\cI$ without loss of generality. Applying \eqref{eq:v_tx>} at $(t_n,y_n)$ for every $n\in\N$, we have
$$\liminf_{n\to\infty} v_{tx}(t_n,y_n)\geq 0,$$
since $\sigma_*(t_n,y_n)\to \sigma_*(t_0,y_0)=0$, $\P$-a.s., as $n\to\infty$ by Proposition \ref{prop:sigma_n-to-sigma_*}. Combining this with $\tau_d>0$, $\P$-a.s. and the fact that $v_{tx}(t_n,y_n)\leq 0$ for every $n\in\N$, by the monotonicity property of $v_x$ from Proposition \ref{prop:monotonicity}, we reach the desired result.
\end{proof}

Notice that, as a simple corollary of Theorem \ref{thm:v_txCont}, we also obtain $v_t$ continuous on $\cM$ and so, in particular, across $\partial \cI$.

\begin{corollary}\label{cor:v_t-cont-M}
The function $v_t$ is continuous on $\cM$.
\end{corollary}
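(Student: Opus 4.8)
The plan is to combine the affine structure of $v$ on $\cM$ with the continuity of the mixed derivative $v_{tx}$ on the whole continuation region. First, I would note that $\cM\subseteq\cC$: this is immediate from $a(t)<b(t)$ for all $t\in[0,T)$ (see \eqref{eq:a<b}) together with the description \eqref{eq:C-I-boundaries}. Next, I would record that $v_{tx}\in C(\cC)$ with $v_{tx}\equiv 0$ on $\cM$. Indeed, $v_{tx}$ is continuous on the open set $\cC\cap\cI$ by interior regularity for \eqref{eq:PDE_vx} (\cite[Th.\ 3.10]{friedman2008partial}); it vanishes on $\inter(\cM)$ since $v_x\equiv\alpha_0$ there; and by Theorem \ref{thm:v_txCont} it extends continuously through $\partial\cI$ with boundary value $0$. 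Gluing these pieces yields a continuous function on $\cC$ that vanishes on $\cM$, and in particular $v_{tx}\in L^\infty_{\ell oc}(\cC)$.

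The key step is then to integrate. Since $v_x$ is globally continuous (hence has no jump across the graph of $b$), the distributional identity $\partial_x v_t=\partial_t v_x$ identifies the continuous representative of $v_{tx}$ from the previous paragraph with the weak spatial derivative of $v_t$ on all of $\cC$ (no singular part is produced on $\{x=b(t)\}$ precisely because $v_x$ has no jump there); combined with $v\in W^{1,2;p}_{\ell oc}([0,T)\times\cO)$ and Fubini's theorem, this makes $x\mapsto v_t(t,x)$ absolutely continuous on compact subsets of the fibre $\{x\in\cO:x>a(t)\}$, with classical spatial derivative $v_{tx}(t,\cdot)$, for a.e.\ $t<T$. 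Fixing a continuous non-decreasing curve $d$ with $a(s)<d(s)<b(s)$ on $[0,T)$ (as in the proof of Proposition \ref{prop:v_x-Lip-time}), for $(t,x)\in\cM$ we have $x\geq b(t)>d(t)$ and $v_{tx}(t,z)=0$ for $z\geq b(t)$, so
\begin{equation}\label{eq:vt-M-plan}
v_t(t,x)=v_t(t,d(t))+\int_{d(t)}^{x}v_{tx}(t,z)\,\ud z=v_t(t,d(t))+\int_{d(t)}^{b(t)}v_{tx}(t,z)\,\ud z\eqqcolon\Psi(t),
\end{equation}
for a.e.\ such $t$, and the right-hand side does not depend on $x$.

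Finally I would check that $\Psi$ is continuous on $[0,T)$ and conclude. The term $t\mapsto v_t(t,d(t))$ is continuous because $v_t$ is continuous on the open set $\cC\cap\cI$ (interior regularity for \eqref{eq:PDE}) and $(t,d(t))\in\cC\cap\cI$ with $d$ continuous; the term $t\mapsto\int_{d(t)}^{b(t)}v_{tx}(t,z)\,\ud z$ is continuous by dominated convergence, using the continuity and local boundedness of $v_{tx}$ on $\cC$ and the continuity of $d$ and $b$ (Proposition \ref{prop:monotone}). Hence $\Psi\in C([0,T))$, and \eqref{eq:vt-M-plan} exhibits $\Psi$ as the continuous version of $v_t$ on $\cM$, i.e.\ $v_t\in C(\cM)$; letting $x\uparrow b(t)$ in the classical version of \eqref{eq:vt-M-plan} valid on $\cC\cap\cI$ additionally gives $\Psi(t)=\lim_{x\uparrow b(t)}v_t(t,x)$, so $v_t$ is continuous across $\partial\cI$, which is what is actually used in the proof of Theorem \ref{thm:main}. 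I expect the main obstacle to be the bookkeeping in the second paragraph — verifying that the continuous $v_{tx}$ is genuinely the weak spatial derivative of $v_t$ across the moving barrier $b$, so that the fundamental theorem of calculus in \eqref{eq:vt-M-plan} is justified — which rests entirely on the (already known) global continuity of $v_x$.
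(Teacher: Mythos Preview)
Your proposal is correct and follows essentially the same strategy as the paper: anchor at a point in $\cC\cap\cI$, integrate $v_{tx}$ (continuous on $\cC$ by Theorem~\ref{thm:v_txCont}) across to $x\in\cM$, and read off continuity of $v_t$ from that of the anchor term and the integral. The paper's execution is slightly more direct than yours: instead of worrying about whether the continuous $v_{tx}$ is the weak spatial derivative of $v_t$ across $b$, it starts one level up by writing $v(t,x)=v(t,c)+\int_c^x v_x(t,y)\,\ud y$ (valid for every $t$ by continuity of $v_x$) and then differentiates in $t$ under the integral via Leibniz's rule, which is justified precisely by the continuity of $v_{tx}$ --- this sidesteps the distributional/a.e.\ bookkeeping you flagged as the main obstacle and yields the classical derivative $v_t(t,x)$ at every point of $\cM$ directly.
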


\begin{proof}
Fix $(t,x)\in\cM$, then $t<T$ and $x\geq b(t)$. Let $c\in(a(t),b(t))$ justified by $a(t)<b(t)$ (cf.\ \eqref{eq:a<b}). Then, $(t,c)\in\cC\cap\cI$ and
\begin{align*}
 v(t,x)=\int_{c}^{x}v_x(t,y)\ud y+v(t,c).
\end{align*}
Therefore, the function $v$ is differentiable at $(t,x)$ with respect to $t$ because $v$ is differentiable at $(t,c)\in \cC\cap \cI$ and $v_x$ is continuously differentiable with respect to $t$ by Theorem \ref{thm:v_txCont}. Consequently,
\begin{align*}
 v_t(t,x)=\int_{c}^{x}v_{tx}(t,y)\ud y+v_t(t,c),
\end{align*}
where we differentiate inside the integral by Liebniz rule since $v_x$ and $v_{tx}$ are continuous.
\end{proof}

We now prove that $v_{xx}$ is continuous across the boundary $b$. We recall that $v_{xx}(t,y)=0$ for every $(t,y)\in \text{int}(\cM)$ because $v_x(t,y)=\alpha_0$ for every $(t,y)\in\cM$. Moreover, by the PDE in \eqref{eq:PDE}, we have that $v_{xx}$ is continuous inside $\cC\cap\cI$ and this continuity can be extended to $\overline{\cC\cap\cI}$, because of the continuity inside $\overline{\cC\cap\cI}$ of all other terms involved in \eqref{eq:PDE}.

\begin{theorem}\label{thm:v_xxCont}
The function $v_{xx}$ is continuous across the boundary $\partial\cI$, i.e., for every $(t_0,y_0)\in\partial\cI$ with $t_0<T$ and for any sequence $((t_n,y_n))_{n\in\N}\subseteq\cI$ such that $(t_n,y_n)\to(t_0,y_0)$ as $n\to\infty$, we have
$$\lim_{n\to\infty}v_{xx}(t_n,y_n)=0.$$
\end{theorem}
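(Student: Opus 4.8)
The plan is to recover $v_{xx}$ from the PDE \eqref{eq:PDE} on $\cC\cap\cI$, check that all the other terms in that equation are continuous up to the free-boundary $b$, and then identify the resulting boundary value using the smooth-fit property of Proposition \ref{prop:limvxxb-}.

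First I would reduce to a convenient class of sequences. Fix $(t_0,y_0)\in\partial\cI$ with $t_0<T$; since $\cM=\cI^c$ is closed and $a(t_0)<b(t_0)$ by \eqref{eq:a<b}, necessarily $y_0=b(t_0)\in\cO$ and $a(t_0)<y_0$. For any $((t_n,y_n))_{n\in\N}\subseteq\cI$ with $(t_n,y_n)\to(t_0,y_0)$, continuity of $a$ gives $a(t_n)<y_n<b(t_n)$ for $n$ large, so that $(t_n,y_n)\in\cC\cap\cI$ eventually, and we may discard the finitely many exceptional terms. On $\cC\cap\cI$ one has, from \eqref{eq:PDE},
\[
v_{xx}(t,x)=\frac{2}{\sigma^2(x)}\bigl(rv(t,x)-h(t,x)-v_t(t,x)-\mu(x)v_x(t,x)\bigr),\qquad (t,x)\in\cC\cap\cI .
\]
I would then argue that every term on the right is continuous at $(t_0,y_0)$: this is immediate for $\sigma,\mu,h$ (with $\sigma^2(y_0)>0$) and for $v,v_x$ by \eqref{eq:reg_v_vx}, and for $v_t$ it follows from Theorem \ref{thm:v_tC0}, Corollary \ref{cor:v_t-cont-M} and the representation $v_t(t,x)=\int_c^x v_{tx}(t,y)\,\ud y+v_t(t,c)$ — obtained as in the proof of Corollary \ref{cor:v_t-cont-M} but now used for $(t,x)$ near $(t_0,b(t_0))$ with $x>a(t)$, after fixing some $c\in(a(t_0),b(t_0))$ — whose right-hand side is jointly continuous in $(t,x)$ because $v_{tx}$ is continuous on $\cC$ (Theorem \ref{thm:v_txCont} together with interior parabolic regularity and $v_{tx}\equiv0$ on $\inter(\cM)$) and $v_t(\cdot,c)$ is continuous near $t_0$. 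Hence the limit
\[
\ell:=\lim_{n\to\infty}v_{xx}(t_n,y_n)=\frac{2}{\sigma^2(y_0)}\bigl(rv(t_0,y_0)-h(t_0,y_0)-v_t(t_0,y_0)-\mu(y_0)\alpha_0\bigr)
\]
exists and does not depend on the approximating sequence in $\cC\cap\cI$ (using $v_x(t_0,y_0)=\alpha_0$).

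It then remains to check $\ell=0$. Applying the displayed identity at the fixed time $t_0$ and letting $y\uparrow b(t_0)$ yields $\lim_{y\uparrow b(t_0)}v_{xx}(t_0,y)=\ell$ by the same continuity facts; since $y\mapsto v_x(t_0,y)$ is $C^1$ on $(a(t_0),b(t_0))$ with derivative $v_{xx}(t_0,\cdot)$, which is bounded near $b(t_0)$ as it has the finite limit $\ell$, I would conclude
\[
v_{xx}(t_0,b(t_0)-)=\lim_{\eps\to0}\frac{1}{\eps}\int_{b(t_0)-\eps}^{b(t_0)}v_{xx}(t_0,z)\,\ud z=\lim_{y\uparrow b(t_0)}v_{xx}(t_0,y)=\ell .
\]
Proposition \ref{prop:limvxxb-} gives $v_{xx}(t_0,b(t_0)-)=0$, so $\ell=0$ and $v_{xx}(t_n,y_n)\to0$, as required.

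The hard part is not in this argument itself but in the results it rests on: the direction-independence of $\ell$ relies on $v_t$ being continuous up to $b$ (hence on Theorem \ref{thm:v_txCont} and Corollary \ref{cor:v_t-cont-M}), and the value $\ell=0$ is exactly the smooth-fit statement of Proposition \ref{prop:limvxxb-}. The only genuinely new point to handle carefully is that the two-dimensional limit of $v_{xx}$ along $(t_n,y_n)\to(t_0,y_0)$ coincides with the one-sided, fixed-time left limit $v_{xx}(t_0,b(t_0)-)$; this is immediate once one has the continuity of $v_t$ and the $C^1$-regularity of $v_x(t_0,\cdot)$ up to $b(t_0)$ just used.
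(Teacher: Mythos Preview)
Your argument is correct and takes a genuinely different route from the paper. The paper proves Theorem \ref{thm:v_xxCont} probabilistically: it compares $v_x(t,y+\eps)$ with $v_x(t,y)$ via the (sub-)martingale property of the auxiliary stopping problem \eqref{eq:optstop}, applies Dynkin's formula on $[0,\sigma_*\wedge\tau_a\wedge S]$, and obtains a pointwise bound of the form $|v_{xx}(t,y)|\le \tilde K\big(\sqrt{\P(\tau_a\wedge S<\sigma_*)}+\sqrt{\E[\sigma_*^2]}\big)$ for $(t,y)\in\cC\cap\cI$; the conclusion then follows from $\sigma_*(t_n,y_n)\to 0$ via Proposition \ref{prop:sigma_n-to-sigma_*}. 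In particular, the paper's proof does not invoke Theorem \ref{thm:v_txCont}, Corollary \ref{cor:v_t-cont-M}, or Proposition \ref{prop:limvxxb-} directly.

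Your approach is instead PDE-based: once $v_t$ is known to be continuous across $\partial\cI$ (which you extract from Theorem \ref{thm:v_txCont} and the integral representation underlying Corollary \ref{cor:v_t-cont-M}), the equation \eqref{eq:PDE} immediately gives the existence and sequence-independence of $\ell=\lim_n v_{xx}(t_n,y_n)$, and the smooth-fit result Proposition \ref{prop:limvxxb-} pins down $\ell=0$. This is shorter and more elementary, but it leans on Theorem \ref{thm:v_txCont} and Corollary \ref{cor:v_t-cont-M}, so it is logically downstream of those results; the paper's probabilistic proof is independent of them and, as a by-product, yields a quantitative estimate for $|v_{xx}|$ in terms of the law of $\sigma_*$. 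Both routes ultimately rest on Proposition \ref{prop:limvxxb-} (yours explicitly, the paper's through Lemma \ref{lem:sigma*hat} and Proposition \ref{prop:sigma_n-to-sigma_*}), so there is no circularity in either.
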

\begin{proof}
Let $(t,y)\in\cC\cap\cI$, $S\in(0,T-t)$ and $\eps>0$. Recall the definitions of $\tau_a=\tau_a(t,y)$ and $\sigma_*=\sigma_*(t,y)$ in \eqref{eq:taua} and \eqref{eq:sigma*}, respectively, and denote $\bar\tau = \bar\tau(t,y)\coloneqq \sigma_*\wedge\tau_a\wedge S$.

Notice that $(t+s,Y^y_s)\in\cC\cap\cI$ for every $s\in [0,\bar\tau)$. Thus, by applying Dynkin's formula to
$$\e^{-\int_0^s\lambda(Y^{y+\eps}_u)\ud u} v_x(t+s,Y^{y}_s)-\e^{-\int_0^s\lambda(Y^{y}_u)\ud u} v_x(t+s,Y^{y+\eps}_s)$$
on the random interval $[0,\bar\tau]$ and using \eqref{eq:ineqPDE_vx} and \eqref{eq:PDE_vx}, we have that
\begin{align*}
 v_x(t,y+\eps)-v_x(t,y)&\leq\E\Big[\e^{-\int_0^{\bar{\tau}}\lambda(Y^{y+\eps}_u)\ud u}v_x(t\!+\!\bar{\tau},Y^{y+\eps}_{\bar{\tau}})\!-\e^{-\int_0^{\bar{\tau}}\lambda(Y^{y}_u)\ud u}v_x(t\!+\!\bar{\tau},Y^y_{\bar{\tau}})\Big]\\
&\hspace{11pt}+\E\Big[\!\int_{0}^{\bar{\tau}}\!\!\Big(\e^{-\int_0^s\lambda(Y^{y+\eps}_u)\ud u}h_x(t\!+\!s,Y^{y+\eps}_s)\!-\e^{-\int_0^s\lambda(Y^{y}_u)\ud u}h_x(t\!+\!s,Y^y_s)\Big)\,\ud s\Big].
\end{align*}
Then, adding and subtracting $\e^{\int_0^{\bar\tau}\lambda(Y^{y+\eps}_u)\ud u}v_x(t+\bar\tau,Y^y_{\bar\tau})$ and recalling that $y\mapsto v(t,y)$, $y\mapsto v_x(t,y)$, $y\mapsto h(t,y)$, $y\mapsto h_x(t,y)$ and $y\mapsto\lambda(y)$ are all non-decreasing, we obtain
\begin{align}\label{eq:v_xx-add-subtr}
\begin{aligned}
v_x(t,y+\eps)-v_x(t,y)&\leq C_2 \E\big[v_x(t+\bar\tau,Y^{y+\eps}_{\bar\tau})-v_x(t+\bar\tau,Y^{y}_{\bar\tau}) \big]\\
&\hspace{11pt}+\E\Big[\big(\e^{\int_0^{\bar\tau}\lambda(Y^{y+\eps}_u)\ud u}-\e^{\int_0^{\bar\tau}\lambda(Y^{y}_u)\ud u}\big)v_x(t+\bar\tau,Y^y_{\bar\tau})\Big]\\
&\hspace{11pt}+C_2 \E\Big[\int_0^{\bar\tau}\big(h_x(t+s,Y^{y+\eps}_{s})-h_x(t+s,Y^{y}_{s}) \big)\ud s\Big]\\
&\hspace{11pt}+\E\Big[\int_0^{\bar\tau}\big(\e^{\int_0^{s}\lambda(Y^{y+\eps}_u)\ud u}-\e^{\int_0^{s}\lambda(Y^{y}_u)\ud u}\big)h_x(t+s,Y^y_{s})\ud s\Big],
 \end{aligned}
\end{align}
where $C_2\coloneqq \sup_{y\in\cO}\e^{T\mu_x(y)}$. For the first term on the right-hand side of \eqref{eq:v_xx-add-subtr}, we have that
\begin{align*}
\begin{aligned}
\E\big[v_x(t+\bar\tau,Y^{y+\eps}_{\bar\tau})-v_x(t+\bar\tau,Y^{y}_{\bar\tau}) \big] &= \E\big[\ind_{\{\sigma_*\leq \tau_a\wedge S\}}(v_x(t+\sigma_*,Y^{y+\eps}_{\sigma_*})-v_x(t+\sigma_*,Y^{y}_{\sigma_*})) \big]\\
&\hspace{11pt}+\E\big[\ind_{\{\tau_a\wedge S<\sigma_*\}}(v_x(t+\bar\tau,Y^{y+\eps}_{\bar\tau})-v_x(t+\bar\tau,Y^{y}_{\bar\tau})) \big]\\
&=\E\big[\ind_{\{\tau_a\wedge S<\sigma_*\}}(v_x(t+\bar\tau,Y^{y+\eps}_{\bar\tau})-v_x(t+\bar\tau,Y^{y}_{\bar\tau})) \big],
\end{aligned}
\end{align*}
where the last equality comes from the fact that
$$0\leq v_x(t+\sigma_*,Y^{y+\eps}_{\sigma_*})-v_x(t+\sigma_*,Y^{y}_{\sigma_*})\leq \alpha_0-\alpha_0=0.$$
Therefore, for the first term on the right-hand side of \eqref{eq:v_xx-add-subtr}, we obtain
\begin{align}\label{eq:v_xxFirst}
\begin{aligned}
&\E\big[v_x(t\!+\!\bar\tau,Y^{y+\eps}_{\bar\tau})\!-\!v_x(t\!+\!\bar\tau,Y^{y}_{\bar\tau}) \big]\\
&=\E\Big[\ind_{\{\tau_a\wedge S<\sigma_*\}}(v_x(t\!+\!\bar\tau,Y^{y+\eps}_{\bar\tau})\!-\!v_x(t\!+\!\bar\tau,Y^{y}_{\bar\tau})) \Big]\\
&=\E\Big[\ind_{\{\tau_a\wedge S<\sigma_*\}}\ind_{\{Y^{y+\eps}_{\bar\tau}\geq b(t+\bar\tau)\}}(v_x(t\!+\!\bar\tau,Y^{y+\eps}_{\bar\tau})\!-\!v_x(t\!+\!\bar\tau,Y^{y}_{\bar\tau})) \Big]\\
&\hspace{11pt}+\!\E\Big[\ind_{\{\tau_a\wedge S<\sigma_*\}} \ind_{\{Y^{y+\eps}_{\bar\tau}< b(t+\bar\tau)\}}(v_x(t\!+\!\bar\tau,Y^{y+\eps}_{\bar\tau})\!-\!v_x(t\!+\!\bar\tau,Y^{y}_{\bar\tau}))\Big]\\
&\leq C_3\E\big[\ind_{\{\tau_a\wedge S<\sigma_*\}} \ind_{\{Y^{y+\eps}_{\bar\tau}\geq b(t+\bar\tau)\}}\big|b(t+\bar\tau)\!-\!Y^y_{\bar\tau}\big|\, \big]\\
&\hspace{11pt}+\!C_3\E\big[\ind_{\{\tau_a\wedge S<\sigma_*\}} \ind_{\{Y^{y+\eps}_{\bar\tau}< b(t+\bar\tau)\}}\big|Y^{y+\eps}_{\bar\tau}\!-\!Y^y_{\bar\tau}\big|\, \big]\\
&\leq C_3 \E\big[\ind_{\{\tau_a\wedge S<\sigma_*\}}\big|Y^{y+\eps}_{\bar\tau}\!-\!Y^y_{\bar\tau}\big|\, \big]\\
 &\leq K_1\eps \sqrt{\P(\tau_a\wedge S<\sigma_*)},
 \end{aligned}
\end{align}
where in the first inequality we have used the Mean value theorem with 
$$C_3\coloneqq \sup_{s\in[0,S]}\sup_{\xi\in[a(t+s),b(t+s)]}v_{xx}(t+s,\xi)<\infty$$
and in the last inequality we have used H\"older's inequality with $K_1>0$ derived from standard estimates for SDEs. For the second term on the right-hand side of \eqref{eq:v_xx-add-subtr}, we have that
\begin{align}\label{eq:v_xxSecond}
\begin{aligned}
 &\E\Big[\big(\e^{\int_0^{\bar\tau}\lambda(Y^{y+\eps}_u)\ud u}-\e^{\int_0^{\bar\tau}\lambda(Y^{y}_u)\ud u}\big)v_x(t+\bar\tau,Y^y_{\bar\tau})\Big]\\
 &\leq C_2\E\Big[\int_0^{\bar\tau} |\mu_x(Y^{y+\eps}_u)-\mu_x(Y^y_u)|\ud u\Big]\\
 &\le C_2 \E\Big[C\sup_{u\in[0,T]}\Big(1+|Y^{y+\eps}_u|^p+|Y^{y}_u|^p\Big)|Y_u^{y+\eps}-Y^{y}_u|\bar\tau\Big]\\
 &\le C_2C \E\Big[\sup_{u\in[0,T]}\Big(1+|Y^{y+\eps}_u|^p+|Y^{y}_u|^p\Big)^4\Big]^{1/4}\E\Big[\sup_{u\in[0,T]}|Y_u^{y+\eps}-Y^{y}_u|^4\Big]^{1/4}\sqrt{\E\big[\bar\tau^2\big]}\\
 &\leq C_4 \eps \sqrt{\E[\sigma_*^2]},
 \end{aligned}
\end{align}
where we used that $v_x$ is bounded in the first inequality; we used the local Lipschitz property of $\mu_x$ \eqref{eq:mu_x-lip} and took the supremum in the second inequality; the third inequality is a consequence of H\"older's inequality; finally, we collect $C_2 C$ and constant from standard estimates for SDEs in $C_4$. For the third term on the right-hand side of \eqref{eq:v_xx-add-subtr}, we obtain
\begin{equation}\label{eq:v_xxThird}
\E\Big[\int_0^{\bar\tau}\big(h_x(t+s,Y^{y+\eps}_{s})-h_x(t+s,Y^{y}_{s}) \big)\ud s\Big]\leq\sup_{s\in[0,S]} L(t+s) K_2\eps\sqrt{\E[\sigma^2_*]},
\end{equation}
where we have used H\"older's inequality with $t\mapsto L(t)$ deriving from assumption \eqref{eq:h_x-lip} and $K_2>0$ deriving from standard estimates for SDEs. Similarly to \eqref{eq:v_xxThird}, for the fourth term on the right-hand side of \eqref{eq:v_xx-add-subtr}, we have that
\begin{equation}\label{eq:v_xxFourth}
\E\Big[\int_0^{\bar\tau}\big(\e^{\int_0^{s}\lambda(Y^{y+\eps}_u)\ud u}-\e^{\int_0^{s}\lambda(Y^{y}_u)\ud u}\big)h_x(t+s,Y^y_{s})\ud s\Big]\leq C_5 \eps \sqrt{\E[\sigma_*^2]},
\end{equation}
where $C_5>0$ is a sufficiently large constant depending of $ C_2$, $\sup_{s\in [t,t+S]}\sup_{\xi\in[a(t+s),b(t+s)]}h_x(s,\xi)<\infty$ and the upper bound on the exponential (justified by similar calculations to those in \eqref{eq:v_xxSecond}). Plugging \eqref{eq:v_xxFirst}, \eqref{eq:v_xxSecond}, \eqref{eq:v_xxThird} and \eqref{eq:v_xxFourth} into \eqref{eq:v_xx-add-subtr}, we obtain
$$v_x(t,y+\eps)-v_x(t,y)\leq \tilde K\eps \big(\sqrt{\P(\tau_a\wedge S<\sigma_*)}+\sqrt{\E[\sigma^2_*]} \big),$$
for some constant $\tilde K>0$. Since $y\mapsto v_x(t,y)$ is non-decreasing, in fact we have
$$|v_x(t,y+\eps)-v_x(t,y)|\leq \tilde K\eps \big(\sqrt{\P(\tau_a\wedge S<\sigma_*)}+\sqrt{\E[\sigma^2_*]} \big).$$
Dividing by $\eps$ and letting $\eps\to 0$, we obtain
\begin{equation}\label{eq:v_xxBounds}
 |v_{xx}(t,y)|\leq \tilde K\eps \big(\sqrt{\P(\tau_a\wedge S<\sigma_*)}+\sqrt{\E[\sigma^2_*]} \big), \qquad \forall \: (t,y)\in\cC\cap\cI.
\end{equation}
Now let $(t_0,y_0)\in\partial\cI$ with $t_0<T$ and $((t_n,y_n))_{n\in\N}\subseteq\cI$ such that $(t_n,y_n)\to(t_0,x_0)$ as $n\to\infty$. Without loss of generality, since $a$ and $b$ are separated, we can consider $((t_n,y_n))_{n\in\N}\subseteq\cC\cap\cI$. Therefore, equation \eqref{eq:v_xxBounds} holds for $v_{xx}(t_n,y_n)$ for every $n\in\N$ and letting $n\to\infty$ we obtain the desired result, since $\sigma_*(t_n,y_n)\to\sigma_*(t_0,y_0)= 0$ as $n\to\infty$ by Proposition \ref{prop:sigma_n-to-sigma_*}.
\end{proof}

\begin{remark}\label{rmk:v_xx-disc}
 Notice that, instead, we expect potential discontinuities of $v_{xx}$ across the boundary $\partial\cC$. Indeed, for any $t\in [0,T)$ and any $\tau\in\cT_t$, we have that
 \begin{align*}
 v(t,x)&\geq \E_x\Big[\e^{-r\tau}g(t+\tau)+\int_0^\tau \e^{-rs}h(t+s,X^{\nu^*}_s)\ud s+\int_{[0,\tau] }\e^{-rs}\alpha_0\ud|\nu^*|_s \Big]\\
 &= g(t,x) + \E_x\Big[ \int_0^\tau \e^{-rs}\Theta(t+s,X^{\nu^*}_s)\ud s+\int_{[0,\tau] }\e^{-rs}\alpha_0\ud|\nu^*|_s\Big],
 \end{align*}
 where $\Theta$ was defined in \eqref{eq:Theta} and in the equality we have used It\^o's formula. Since $\cC=\{(t,x)\in[0,T)\times\cO|v(t,x)>g(x) \}$, we obtain $\{(t,x)\in[0,T)\times\cO|\Theta(t,x)>0 \}\subseteq \cC$ or, equivalently, $\cS\subseteq \{(t,x)\in[0,T)\times\cO|\Theta(t,x)\leq0 \}$. Therefore, $\Theta(t_0,x_0)\leq 0$ for every $(t_0,x_0)\in\cS$ with $t_0<T$. Now let $((t_n,x_n))_{n\in\N}\subseteq \cC$ such that $(t_n,x_n)\to (t_0,x_0)\in\partial\cC$ as $n\to\infty$, with $t_0<T$. By continuity of $v$, $v_t$ and $v_x$ across $\partial\cC$ and by the PDE in \eqref{eq:PDE}, we obtain
 \begin{align}\label{eq:pdevxx}
 \begin{aligned}
 \lim_{n\to\infty} v_{xx}(t_n,x_n)&=- \tfrac{2}{\sigma^2(x_0)}\lim_{n\to\infty}\big[v_t(t_n,x_n)-rv(t_n,x_n)+h(t_n,x_n)+\mu(x_n) v_x(t_n,x_n) \big]\\
 &=- \tfrac{2}{\sigma^2(x_0)}\Theta(t_0,x_0)\geq 0.
 \end{aligned}
 \end{align}
 On the other hand, $v_{xx}(t,x)=0$ for every $(t,x)\in \inter(\cS)$.
\end{remark}

\section*{Acknowledgements}
We would like to thank Tiziano De Angelis for the fruitful discussions on this work.

\bibliographystyle{plain}
\bibliography{Bibliography}

\end{document}